\pgfplotsset{compat=1.16}
\def\nudge{.2}
\tikzset{axis/.style={ultra thick, -latex, shorten <=-\nudge cm, shorten >=-2*\nudge cm}}
\tikzset{line/.style={thick}}
\numberwithin{equation}{section}
\theoremstyle{plain}
\newtheorem{thm}{Theorem}[section]
\newtheorem{lem}[thm]{Lemma}
\newtheorem{prop}[thm]{Proposition}
\newtheorem{cor}[thm]{Corollary}
\newtheorem{rem}[thm]{Remark}
\newcommand{\defas}{:=}
\newcommand{\asdef}{=:}
\newcommand{\dual}[2]{\left\langle{#1},{#2}\right\rangle}
\newcommand{\norm}[1]{\left\|{#1}\right\|}
\newcommand{\sprod}[2]{\left({#1},{#2}\right)}
\newcommand{\ud}{u_d}
\newcommand{\vphi}{\varphi}
\newcommand{\R}{{\mathbb{R}}}
\newcommand{\Res}{\operatorname{Res}}
\newcommand{\snorm}[1]{\left|{#1}\right|}
\newcommand{\mesh}{\mathcal{M}}
\newcommand{\vertices}{\mathcal{V}}
\begin{document}

\title[A~posteriori error analysis for PDE constraint optimization]{A~posteriori error analysis for optimization \\ with PDE constraints}

\begin{abstract}
We consider finite element solutions to optimization problems, where
the state depends on the possibly constrained control through a linear
partial differential equation.  Basing upon a reduced and rescaled
optimality system, we derive a posteriori bounds capturing the
approximation of the state, the adjoint state, the control and the
observation. The upper and lower bounds show a gap, which grows with
decreasing cost or Tikhonov regularization parameter. This growth is
mitigated compared to previous results and  can be countered by
refinement if control and observation involve compact
operators. Numerical results illustrate these properties for model
problems with distributed and boundary control. 
\end{abstract}

\author[F. Gaspoz]{Fernando Gaspoz}
 \address[Fernando Gaspoz]{Universidad Nacional del Litoral,
 Facultad de Ingenier\'\i a Qu\'\i mica,  Santiago del Estero 2829,
  3000 Santa Fe, Argentina.}
 \email{fgaspoz@fiq.unl.edu.ar}
 \author[C. Kreuzer]{Christian Kreuzer}
 \address[Christian Kreuzer\, \orcidlink{0000-0003-2923-4428}]{Technische Universit\"at Dortmund,
 Fakult\"at f\"ur Mathematik, Vogelpothsweg 87,
 44227 Dortmund, Germany.}
 \email{christian.kreuzer@tu-dortmund.de}
 \author[A. Veeser]{Andreas Veeser}
 \address[Andreas Veeser\, \orcidlink{0000-0002-2152-2911}]
 {Dipartimento di Matematica 'F. Enriques',
 Università degli Studi di Milano,
 Via C. Saldini, 50,
 20133 Milano, Italy.}
 \email{andreas.veeser@unimi.it}
 \author[W. Wollner]{Winnifried Wollner}
\address[Winnifried Wollner\, \orcidlink{0000-0002-6571-8043}]{Universit\"at Hamburg, MIN Fakult\"at, Fachbereich Mathematik, Bundesstr. 55, 20146 Hamburg, Germany.}
\email{winnifried.wollner@uni-hamburg.de}

\subjclass{49M25,49M41,65N15,65N20,65N30}
\date{\today}
\maketitle

\section{Introduction}
%
%
A basic example for optimization problems constrained by partial differential equations (PDEs) is 
\begin{equation}
  \label{sim-min}
  \min_{(q,u)\in K \times \mathring{H}^1(\Omega)}
  \frac12 \norm{ u-\ud }_{L^2(\Omega)}^2 + \frac\alpha2 \norm{q}_{L^2(\Omega)}^2
  \quad\text{subject to}\quad
  -\Delta u=f+q~\text{in}~\Omega,
\end{equation}
where \(\Omega\subset \R^d\) is a suitable domain, $\ud$ is the desired state and
we assume box constraints, i.e.\ $K:=\{q\in L^2(\Omega) \mid a\le
q\le b\}$ with $a<b$.  Such problems are ubiquitous in the optimal control of PDEs. They appear also as Tikhonov regularizations of inverse problems. In the former case the parameter \(\alpha>0\) scales the cost of the control, while in the latter case it is the regularization parameter, which may be chosen quite small. Hence, in any result about such problems, the dependencies on $\alpha$ are critical.

This article concerns the a~posteriori error analysis for problems like
\eqref{sim-min}. An a~posteriori error analysis aims at deriving computable quantities
that, ideally, bound a suitable error from above and below. These quantities can
then be used to assess the approximate solution, and as input for an
adaptive strategy refining the mesh. For both applications, close
bounds will be advantageous and thus, in the case of problems like \eqref{sim-min}, their dependencies on $\alpha$ are of particular interest.

To state our main results for the model problem~\eqref{sim-min}, we consider the linear finite element solution, with or without discretized control; cf.\ \cite{Hinze:2005}. We denote its error by $\mathsf{err}$, which suitably combines the \(\mathring{H}^1\)-errors of state and adjoint state and the \(L^2\)-errors of control and observation. Furthermore, let $\eta^2=\sum_{z \in \vertices} \eta_z^2$ be an estimator, whose local indicators $\eta_z$ quantify the local residuals in Theorem~\ref{T:Apost1-ExampleCC} below in the spirit of \cite{KreuzerVeeser:2021, BonCanNocVee:2024}. Then Theorems~\ref{T:Apost1-ExampleCC} and \ref{T:apost-ExampleCC} imply, as $\alpha \to 0$,
\begin{equation}
  \label{equivalence-for-sim-min}
  c_\Delta \eta
  \le
  \mathsf{err}
  \le
  C_\Delta \min \left\{
    1 + O\left( \frac{1}{\sqrt{\alpha}} \right), \,
    1 + O\left( \frac{1}{\alpha} \right) \frac{\left(\sum_{z\in\vertices} h_z^2 \eta_z^2\right)^{\tfrac{1}{2}}}{\eta} 
  \right\}
  \eta.
\end{equation}
Here $h_z$ denotes the local meshsize around a vertex $z \in
\vertices$ and the constants $c_\Delta$, $C_\Delta$ are independent of
$\alpha$. More precisely, they depend on the technique quantifying
the local residuals, the shape regularity of the mesh cells, and the
Poisson problem, which is associated with the state
equation. Furthermore, the two constants ensure the equivalence 
\begin{equation} 
  \label{equivalence-for-Poisson}
  c_\Delta \eta_\Delta
  \le
  \norm{ \nabla(v - V) }_{L^2(\Omega)}
  \le
  C_\Delta \eta_\Delta,
\end{equation}
where $\eta_\Delta$ is the counterpart of $\eta$ for the $\mathring{H}^1$-error
between any solution $v$ to the Poisson problem and its linear finite
element approximation $V$.

For equivalences like \eqref{equivalence-for-Poisson}, the ratio $C_\Delta/c_\Delta \geq 1$ of the involved constants provides information about the quality of the estimator $\eta_\Delta$. The structure of \eqref{equivalence-for-sim-min} is slightly more involved because the min depends on the indicators $\eta_z$, $z \in \vertices$, and therefore is no part of the constant. Nevertheless, we can measure the quality of the error quantification~\eqref{equivalence-for-sim-min} similarly by the 
ratio of upper to lower bound, which we call gap. 
In this context, the
error bounds~\eqref{equivalence-for-sim-min} lead to
the two following interrelated conclusions: 
\begin{itemize}
\item Taking \eqref{equivalence-for-Poisson} as a benchmark for the
  used estimation technique, the gap in \eqref{equivalence-for-sim-min} becomes
  the one associated with $\eta_\Delta$ for $h \defas \max_{z\in\vertices} h_z \to 0$
  and fixed $\alpha>0$. 
\item Ensuring $\sum_{z\in\vertices} h_z^2 \eta_z^2 \leq \alpha^2
  \eta^2$, the gap in the error bounds \eqref{equivalence-for-sim-min} remains uniformly bounded for  all $\alpha>0$. 
\end{itemize}

Apart from these two features, the error
quantification~\eqref{equivalence-for-sim-min} improves existing  
results. To be more precise, we compare with \cite{KohlsRoeschSiebert:2014,KohlsKreuzerRoeschSiebert:18}, which is the first
abstract a posteriori analysis 
and reviews previous  a posteriori bounds. Therein, as $\alpha \to 0$, the gap grows with
\(O(1 / \alpha)\) if the control is not discretized, otherwise with \(O( 1
/ \alpha^2 )\); see Remark~\ref{R:Comparison-with-KRS} for more
details. In both cases, the error bounds
\eqref{equivalence-for-sim-min} mitigate those growths to
$O(1/\sqrt{\alpha})$. 

The gap in~\eqref{equivalence-for-sim-min} is
closely related to the a~priori results in our previous
article~\cite{GaKrVeWo:2020}. To see this, let us suppose a
variational discretization \cite{Hinze:2005} for simplicity and denote
by $\textsf{best-err}$ the best approximation error in the underlying discrete
spaces. Then \cite[Section~5.2]{GaKrVeWo:2020} ensures the following
variant of C\'{e}a's lemma:  
\begin{equation}
  \label{gen-Cea-lemma}
  \mathsf{err}
  \le
  \mu_\Delta  \min\left\{
    1 + O\left(\frac1{\sqrt{\alpha}}\right), \,
    1 + O\left(\frac{h}{\alpha}\right)
  \right\}
  \textsf{best-err}
  \quad
  \text{as $\alpha,h \to 0$,}
\end{equation}
where $\mu_\Delta$ is the near-best approximation constant associated
with the discretization of the state equation. Inspecting C\'ea's lemma and the
derivation of \eqref{equivalence-for-Poisson}, we see that $\mu_\Delta$ corresponds to the ratio $C_\Delta / c_\Delta$. Furthermore, we observe that the $\min$ in
\eqref{gen-Cea-lemma} is an upper bound for the one
in~\eqref{equivalence-for-sim-min}. 

The similarities between \eqref{equivalence-for-sim-min} and
\eqref{gen-Cea-lemma} result from a `duality' in the respective
derivations. To illustrate this, we outline the key ingredients of
both derivations.  
In both cases, we first consider the possibly nonlinear optimality system
\begin{equation}
  \label{Eq:OptimalControlSystem}
  -\Delta u - \Pi_K\left( -\tfrac{1}{\alpha} p \right) = f,
  \qquad
  -\Delta p - u = -u_d,  
\end{equation}
where the control is implicitly given by \(q=\Pi_K(-\tfrac1\alpha
p)\), and then divide the adjoint equation by $\sqrt{\alpha}$ and
replace the adjoint state $p$ by $z = p/{\sqrt{\alpha}}$.  The latter
has the effect that the perturbations of the Laplacian in both
equations scale like $1/\sqrt{\alpha}$.  Analyzing the properties of
the possibly nonlinear operator $\mathcal{B}_\alpha$ associated with the resulting system prepares the ground for both 
\eqref{equivalence-for-sim-min} and 
\eqref{gen-Cea-lemma}.

More precisely, the continuity and generalized coercivity properties
of $\mathcal{B}_\alpha$ are, respectively, crucial ingredients for the lower
bound and for the first option in the upper bound of
\eqref{equivalence-for-sim-min}. For the first option in
\eqref{gen-Cea-lemma}, one combines the continuity properties of
$\mathcal{B}_\alpha$ with the coercivity properties of its discretization. 

The second options instead hinge on the compactness of the
perturbations of the Laplacian in \eqref{Eq:OptimalControlSystem},
which arises from the embedding $\mathring{H}^1(\Omega) \subset
L^2(\Omega)$. In both cases, the error is decomposed into a main part
and a `compact' part. To this end, we use an auxiliary function in the
spirit of the elliptic reconstruction \cite{MakridakisNochetto:2003}
for \eqref{equivalence-for-sim-min} and a generalized Ritz projection
for \eqref{gen-Cea-lemma}; see also Remark~\ref{R:connections}. The
actual benefit from the available compactness is limited by regularity
theorems for the Poisson equation on polygonal domains and/or
properties of the discretization.

To conclude this introduction, the following remarks are in order:
\begin{itemize}
\item To quantify the local residuals in
  Theorem~\ref{T:Apost1-ExampleCC}, one can use classical techniques,
  see, e.g. \cite{AinsworthOden:2000,Verfuerth:2013}, instead of
  \cite{KreuzerVeeser:2021, BonCanNocVee:2024}. In this case, the
  error quantification \eqref{equivalence-for-sim-min} holds then only up to
  so-called oscillation terms. 
\item In the absence of control constraints, Theorem~\ref{T:apostNCC} 
  provides a variant of \eqref{equivalence-for-sim-min} with an
  improved gap.
\item Although our approach is based upon the reduced optimality
  system \eqref{Eq:OptimalControlSystem}, it is not restricted to
  variational control discretizations as in \cite{Hinze:2005} and covers also
  discretized controls and bounds for their error; see Corollary~\ref{C:gControl}. 
\end{itemize}

This article is organized as follows. Section~\ref{sec:2} recalls the
continuity and coercivity properties of $\mathcal{B}_\alpha$ from
\cite{GaKrVeWo:2020}, along with their proofs due to their importance
and for the sake of a self-contained presentation. The principal part of our a~posteriori
analysis is then developed in Section~\ref{sec:abstract-apost:res=err}. Finally,
Section~\ref{sec:applications} illustrates the obtained results by
applying them to \eqref{sim-min} and a Neumann boundary control
problem, as well as by numerical tests in both cases.

\section{Optimization problem and reduced optimality system}
\label{sec:2}
This section presents the abstract optimization problem to be considered and recalls from \cite{GaKrVeWo:2020} its reduced and rescaled optimality system, along with key ingredients for its well-posedness. These ingredients are also crucial for the subsequent a posteriori error analysis.

To introduce the abstract optimization problem, we take the viewpoint of an optimal control problem and start with the \emph{state equation}. We assume that the control variable \(q\) is taken from a real Hilbert space \((Q, (\cdot,\cdot)_Q)\) with induced norm \(\|\cdot\|_Q\). The relation between control \(q\in Q\) and state \(u\in V_1\) is given by a linear boundary value
problem of the form
\begin{align}\label{bvp}
  Au=f+Cq, 
\end{align}
with the following properties:
\begin{itemize}
\item The differential operator \(A:V_1\to V_2^*\) is defined between
  the \emph{state space} \(V_1\), which is a Hilbert space with scalar
  product \((\cdot,\cdot)_1\) and the
  dual \(V_2^*\) of a second Hilbert space \((V_2,
  (\cdot,\cdot)_2)\). For  \(i=1,2\), the induced norms on \(V_i\) and \(V_i^*\) and the dual pairing are denoted by \(\|\cdot\|_i\), \(\|\cdot\|_{i,*}\),
  and \(\langle\cdot,\cdot\rangle_i\), respectively. We assume that
  the operator A is a linear isomorphism, i.e.\ the
  bilinear form \(a:V_1\times V_2\to \R\) defined by
  \((v_1,v_2)\mapsto \langle Av_1,v_2\rangle_2\) satisfies
  \begin{subequations}
    \label{ass-on-a}
    \begin{gather}
      \label{cont-a}
      M_a
      \defas
      \sup_{\norm{v_1}_1=1}\,\sup_{ \norm{v_2}_{2}=1} a(v_1,v_2) < \infty,
      \\
      \label{non-deg-a}
      \forall v_1 \in V_1
      \quad
      \Big( \forall v_2\in V_1 \; a(v_1,v_2) = 0 \Big) \implies v_1 =0,
      \\
      \label{inf-sup-a}
      m_a
      \defas
      \adjustlimits{\inf}_{\norm{v_2}_2=1}{\sup}_{\norm{v_1}_{1}=1} a(v_1,v_2) > 0;
    \end{gather} 
  \end{subequations}
 compare, e.g., with \cite{Necas:62}.
\item The operator \(C:Q\to
  V_2^*\) is linear and bounded with constant \(M_C\).
\item The load term satisfies \(f\in V_2^*\).
\end{itemize}


Our goal is then to numerically solve the constrained optimization 
problem
\begin{equation}
  \label{min}
  \min_{(q,u)\in K \times V_1}
  \frac12\norm{Iu-\ud }_W^2 + \frac\alpha2 \norm{q}_Q^2
  \quad\text{subject to}\quad
  Au = f+Cq,
\end{equation}
and we suppose in addition:
\begin{itemize}
\item The set \(K\subset Q\) of \emph{admissible controls} is nonempty, closed and convex. 
\item The \emph{cost of the control} is scaled with a parameter
  \(\alpha>0\), which can also be viewed as a Tikhonov regularization. 
\item The \emph{desired state} \(u_d\) lies in the \emph{target
    space} \(W\), which is a  Hilbert space with scalar product
  \((\cdot,\cdot)_W\) and induced norm \(\|\cdot\|_W\).
\item The \emph{observation operator} \(I:V_1\to W\) is linear and
  bounded with constant \(M_I\). 
\end{itemize}

Problem~\eqref{min} is  a quadratic minimization problem with a possibly non-linear constraint (in the case when \(K\neq Q\)). 
As the set of admissible controls is convex and closed, 
standard arguments ensure the existence of a unique solution; 
see, e.g.,~\cite[Theorem~1.1]{Lions:1971} 
or~\cite[Chapter~2.5]{Troeltzsch:2010}.

To formulate the optimality system for~\eqref{min}, we introduce the
adjoint operators 
$A^*$, $C^*$, $I^*$ of $A$, $C$, $I$ 
by
\[
  A^* v_2 = a(\cdot,v_2),
  \quad
  \sprod{q}{C^* v_2}_Q = \dual{Cq}{v_2}_2,
  \quad
  \dual{I^* w}{v_1}_1 = \sprod{Iv}{w}_W
\]
for all $v_1\in V_1$, \(v_2\in V_2\), $q\in Q$, $w\in W$.  
The unique solution $(q,u)$
of~\eqref{min} is equivalently characterized by the existence of a $p \in V_2$, called \emph{adjoint state},  such that the following system
of optimality conditions is satisfied:
\begin{equation}
  \label{mod-opt-pre}
  Au = f+Cq,
  \quad
  A^*p = I^*(Iu-\ud),
  \quad
  q = \Pi_{K}(-\tfrac{1}{\alpha} C^*p);
\end{equation}
compare with \cite{Troeltzsch:2010}. 
Here $\Pi_{K}\mid Q \rightarrow K\subset Q$ is the projection
operator onto the 
admissible set $K$, which is characterized by
\(\|q-\Pi_Kq\|_{Q}=\inf_{p\in K}\|q-p\|_Q\) or, equivalently, by
\begin{align}
  \label{eq:PiK}
  \forall p\in K\quad (q-\Pi_Kq,\Pi_Kq-p)_Q\ge 0.
\end{align}
We notice that $\Pi_{K}$ is Lipschitz continuous with constant $1$ and 
satisfies the inequality
\begin{equation}
  \label{eq:PK-lip}
  (\Pi_{K}(q_1) - \Pi_{K}(q_2), q_1-q_2)_Q
  \ge
  \|\Pi_{K}(q_1)-\Pi_{K}(q_2)\|_Q^2.
\end{equation}
If
\(K=\{q\in L^2(\Omega)\mid a\le q\le b\}\), 
$ V_1 = V_2 = \mathring{H}^1(\Omega) $,
$ A = -\Delta $ is the weak Laplacian,
$ Q = L^2(\Omega) = W$,
$C$ and $I$  are the canonical compact immersions $L^2(\Omega) \to H^{-1}(\Omega) $ and  $H^1_0(\Omega) \to L^2(\Omega)$,
then~\eqref{min} simplifies to the optimization 
problem~\eqref{sim-min} in the introduction.  Notice that, in this
case, \(\Pi_Kq=\max\{\min\{b,q\},a\}\) and the operators 
$C$ and $I$ are related by $C^*=I$.

As in \cite{GaKrVeWo:2020}, we rescale the adjoint variable by
\begin{equation}
  \label{rescaling-of-adjoint-var}
  z = \frac1{\sqrt \alpha}p \in V_2.
\end{equation}
This will turn out advantageous when considering the limit of vanishing Tikhonov regularization; see in particular Remark~\ref{R:Comparison-with-KRS}. We thus obtain the rescaled system
\begin{equation}
  \label{mod-opt}
  Au = f+Cq,
  \quad
  A^*z = \frac1{\sqrt\alpha}I^*(Iu-\ud),
  \quad
  q = \Pi_{K}(-\tfrac{1}{\sqrt \alpha} C^*z),
\end{equation}
and inserting the last equation into the first one, we end up with the
reduced optimality system
\begin{align}
  \label{mod-ropt}
  \begin{pmatrix}
    -\tfrac{1}{\sqrt{\alpha}}  I^* I &  A^*
    \\
    A &-C\Pi_K(-\tfrac{1}{\sqrt{\alpha}}  C^* \cdot)
  \end{pmatrix}
        \begin{pmatrix}
          u \\
          z
        \end{pmatrix}
  =
  \begin{pmatrix}
    - \tfrac{1}{\sqrt{\alpha}}  I^* \ud
    \\
    f
  \end{pmatrix}.
\end{align}
In the typical case, when the operators \(C\) and \(I\) are compact
(see~\eqref{sim-min}), we observe that the operator on the left-hand side of~\eqref{mod-ropt} is a compact perturbation of the control to
state operator \(A\) and its adjoint \(A^*\). This was exploited in
\cite{GaKrVeWo:2020} to show that the near-best approximation constant of
Galerkin approximations for the system~\eqref{mod-ropt} asymptotically
tends to the near-best approximation constant of the Galerkin approximation
of the state equation; cf.\ \eqref{gen-Cea-lemma}. In this work, we aim to exploit this
observation in the a posteriori analysis for~\eqref{mod-ropt}.

The variational formulation of~\eqref{mod-ropt} reads 
\begin{subequations}
  \label{mod-vred-rows}
  \begin{align}
    &\forall \vphi_1\in V_1
    & a(\vphi_1,z) - \tfrac{1}{\sqrt{\alpha}} \sprod{Iu}{I\vphi_1}_W
    &=
      -\tfrac{1}{\sqrt{\alpha}} \sprod{\ud}{I\vphi_1}_W,
    \\
    &\forall \vphi_2\in V_2
    &a(u,\vphi_2) - \sprod{\Pi_K(- \tfrac{1}{\sqrt{\alpha}} C^*z)}{C^*\vphi_2}_Q
    &=\langle f,\vphi_2\rangle_2
      .
  \end{align}
\end{subequations}

This suggests to introduce the Hilbert space 
\begin{equation}
\label{Hilbert-product}
  V \defas V_1 \times V_2
  \quad\text{with}\quad
  \norm{v}
  \defas
  \left(
    \norm{v_1}_1^2 + \norm{v_2}_2^2
  \right)^{\frac12}, \quad v=(v_1,v_2)\in V  
\end{equation}
with dual space \(V^*=V_1^*\times V_2^*\) and induced dual norm \(\|\cdot\|_*\), as well as
the form $b_\alpha\mid V\times V\to\R$ given by
\begin{subequations}
  \label{mod-blin}
  \begin{align}
    b_\alpha(v,\vphi) &\defas \mathbf{a}(v,\vphi)+c_\alpha(v,\vphi) 
  \end{align}
  where
  \begin{align}\label{mod-blina}
    \mathbf{a}(v,\vphi)&\defas a(v_1,\vphi_2)+a(\vphi_1,v_2)
    \\\label{mod-blinc}
    c_\alpha(v,\vphi)&\defas - \left(\Pi_{K}\left(-\tfrac{1}{\sqrt\alpha} C^*v_2\right),C^*\vphi_2\right)_Q
                           -\tfrac{1}{\sqrt\alpha}\left(I v_1,I\vphi_1\right)_W 
  \end{align}
\end{subequations}
for $v=(v_1,v_2), \vphi=(\vphi_1,\vphi_2)\in V$. Although $\mathbf{a}$ is bilinear, $b_\alpha$ is in general only linear in the second argument due to the presence of $\Pi_K$ in $c_\alpha$. In the introduction, we have mentioned the operator $\mathcal{B}_\alpha:V \to V^*$ given by $\mathcal{B}_\alpha v \defas b_\alpha(v,\cdot)$. The bilinear form
$\mathbf{a}:V\times V\to\R$ inherits its continuity and nondegeneracy properties from $a$. More precisely, we have
\begin{equation}
  \label{cont-inf-sup-vec-a}
  \adjustlimits{\sup}_{\norm{v}=1}{\sup}_{\norm{\vphi}=1} |\mathbf{a}(v,\vphi)|
  =
  M_a
  \quad\text{and}\quad
  \adjustlimits{\inf}_{\norm{v}=1}{\sup}_{\norm{\vphi}=1} \mathbf{a}(v,\vphi)
  =
  m_a
\end{equation}
with $M_a$ and $m_a$ from \eqref{ass-on-a}. While the first identity is straight-forward, the second one hinges on the inf-sup-duality, cf.\ Babu\v{s}ka~\cite{Babuska:71},
\begin{equation}
  \label{inf-sup-duality}
  \adjustlimits{\inf}_{\norm{v_1}_1=1}{\sup}_{\norm{\vphi_2}_{2}=1} \mathbf{a}(v_1,\vphi_2)
  =
  \adjustlimits{\inf}_{\norm{v_2}_2=1}{\sup}_{\norm{\vphi_1}_{1}=1} \mathbf{a}(\vphi_1,v_2).
\end{equation}

In this notation,~\eqref{mod-ropt} reads 
\begin{equation}
  \label{vred}
  \begin{aligned}
    \text{find } x\in V &\text{ such that }
    ~\forall \vphi\in V
    \quad
    b_\alpha(x,\vphi) =\langle f,\vphi_2\rangle_{2}
    -\tfrac{1}{\sqrt{\alpha}} \sprod{\ud}{I\vphi_1}_W.
  \end{aligned}
\end{equation}

A pair $x = (u,z) \in V$ solves the variational formulation~\eqref{vred} of the reduced and rescaled optimality system if and only if  the 
triple $\big(u,z,\Pi_{K}\left(-C^*z/\sqrt{\alpha}\right)\big)\in V\times Q$ satisfies the optimality 
system~\eqref{mod-opt}.  Consequently, thanks to the convexity 
of~\eqref{min}, $x=(u,z)\in X$ is a solution of~\eqref{vred} if and
only if 
$\big(\Pi_{K}\left(-C^*z/\sqrt{\alpha}\right),u\big)\in Q \times V_1$ is a solution of the optimization 
problem~\eqref{min}.

Although $b_\alpha$ is not bilinear in general, we have derived in~\cite[Theorem~5.1]{GaKrVeWo:2020}  properties that generalize the continuity and inf-sup stability of bilinear forms. As this is fundamental for the following a~posteriori analysis, we state them and repeat their proofs.  To this end, we
introduce on \(V\) the seminorm
\begin{equation}
  \label{enorm}
  \snorm{v}
  \defas
  \left( \norm{Iv_1}_W^2 + \norm{C^*v_2}_Q^2
  \right)^{1/2}
\end{equation}
and its relative the pseudometric
\begin{equation}\label{eq:deltaK}
  \delta_\alpha(v,w)^2
  \defas
  \alpha \norm{\Pi_{K}\left (-\tfrac{1}{\sqrt{\alpha}}C^*v_2 \right)
  -
  \Pi_{K}\left( -\tfrac{1}{\sqrt{\alpha}}C^*w_2 \right) }_Q^2
  +
  \norm{I(v_1-w_1)}_W^2.
\end{equation}
For \(v,w\in V\) choosing \(\vphi=(-(v_1-w_1), v_2-w_2)\), we
have
\begin{subequations}
  \label{cont-coercivity-c-cc}
  \begin{equation}
    c_\alpha(v,\vphi) - c_\alpha(w,\vphi)
    \geq
    \frac{1}{\sqrt{\alpha}} \delta_\alpha(v, w)^2,
  \end{equation}
  while, for any $v,w,\vphi \in V$, we have
  \begin{equation}
    \label{cont-c-cc}
    |c_\alpha(v,\vphi) - c_\alpha(w, \vphi)|
    \leq
    \frac{1}{\sqrt{\alpha}} \delta_\alpha(v,w) \snorm{\vphi}
  \end{equation}
\end{subequations}
and, thanks to~\eqref{eq:PK-lip},
\begin{equation}
  \label{delta<enorm}
  \delta_\alpha(v,w)
  \leq
  \snorm{v-w}.
\end{equation}
The continuity bound \eqref{cont-c-cc} and a Cauchy-Schwarz inequality lead to
\begin{equation}
  \label{cont-cc}
  |b_\alpha(v,\vphi) - b_\alpha(w,\vphi)|
  \leq M_a\,
  d_\alpha(v,w) \norm{\vphi},
\end{equation}
where the metric \(d_\alpha\) is defined by
\begin{equation}\label{eq:dK}
  d_\alpha(v,w)
  \defas
  \norm{v-w} + \frac1{\sqrt{\alpha}} \frac{M}{M_a} \delta_\alpha(v,w),
  \quad
  v,w \in V,
\end{equation}
with
\begin{equation*}
  M \defas \max\{M_I,M_C\},
\end{equation*}
and $\norm{\cdot}$ is from \eqref{Hilbert-product}.
This brings us in the position to state and prove the announced properties of the operator associated with the reduced and rescaled optimality system~\eqref{mod-ropt}.

\begin{thm}[Continuity and inf-sup stability of form $b_\alpha$]
  \label{T:bK}
  For any $v,w,\vphi\in V$, we have 
  \begin{subequations}
    \begin{align}\label{bK:Cont}
      | b_\alpha(v,\vphi) - b_\alpha(w,\vphi) |
      &\leq
        M_a \,d_\alpha(v,w) \norm{\vphi}
        \intertext{and there exists \(0\neq\psi\in V\) such that}
        b_\alpha \big( v,\psi) - b_\alpha \big( w,\psi  \big) \label{bK:infsup}
      &\geq
        \frac{m_a}{\kappa} \,d_\alpha(v,w) \norm{\psi},
    \end{align}
  \end{subequations}
  where $\kappa$ is defined by
  \begin{equation}
    \label{kappa}
    \kappa
    =
    \frac{1+2L}{1+L} \left(
      1 + \frac{M}{m_a} \left( 1 + 2L
      \right)
    \right)
  \quad\text{with}\quad
    L=\frac{M}{\sqrt\alpha}.
  \end{equation}
\end{thm}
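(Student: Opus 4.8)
The continuity estimate \eqref{bK:Cont} is already available: it is exactly \eqref{cont-cc}, obtained by splitting $b_\alpha = \mathbf{a} + c_\alpha$, bounding the bilinear part via the first identity in \eqref{cont-inf-sup-vec-a} and the compact part via \eqref{cont-c-cc}, and collecting the two contributions into the metric $d_\alpha$ through \eqref{eq:dK}. So the entire work lies in producing the test function $\psi$ witnessing the inf-sup-type lower bound \eqref{bK:infsup}.

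The plan is to build $\psi$ as a convex-type combination of two competing test directions. The first direction exploits the inf-sup stability of the bilinear part $\mathbf{a}$: by the second identity in \eqref{cont-inf-sup-vec-a} there is a unit vector $\varphi^{\mathbf a}$ with $\mathbf{a}(v-w,\varphi^{\mathbf a}) \ge m_a \norm{v-w}$, but on this direction the nonlinear part $c_\alpha$ is only controlled, not signed, so via \eqref{cont-c-cc} it contributes a term of size $\tfrac{1}{\sqrt\alpha}\delta_\alpha(v,w)$ with the wrong (or indefinite) sign. The second direction is the coercivity direction $\varphi^c \defas (-(v_1-w_1),\,v_2-w_2)$ from \eqref{cont-coercivity-c-cc}, on which $c_\alpha(v,\varphi^c)-c_\alpha(w,\varphi^c) \ge \tfrac{1}{\sqrt\alpha}\delta_\alpha(v,w)^2$ is nonnegative and captures exactly the $\delta_\alpha$-content of $d_\alpha$; here the bilinear part $\mathbf{a}(v-w,\varphi^c)$ is the uncontrolled term, but because $\mathbf{a}$ mixes the two components antisymmetrically one checks that $\mathbf{a}(v-w,\varphi^c)$ does not have a definite sign and is bounded by $M\norm{v-w}^2/$ scale. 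The idea is to set $\psi = \varphi^{\mathbf a} + \lambda\,\varphi^c$ for a suitable weight $\lambda>0$ (depending on $L=M/\sqrt\alpha$), chosen so that the good coercive term on $\varphi^c$ dominates the parasitic $\delta_\alpha$-term coming from $c_\alpha$ on $\varphi^{\mathbf a}$, while the parasitic $\mathbf{a}$-term on $\varphi^c$ is in turn absorbed by the good $m_a\norm{v-w}$ term from $\varphi^{\mathbf a}$. Summing the two contributions then yields a lower bound proportional to $m_a\bigl(\norm{v-w} + \tfrac{1}{\sqrt\alpha}\tfrac{M}{M_a}\delta_\alpha(v,w)\bigr) = m_a\, d_\alpha(v,w)$, i.e.\ the right-hand side of \eqref{bK:infsup}.

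The bookkeeping of constants is where the factor $\kappa$ in \eqref{kappa} emerges: the weight $\lambda$ and the cross-absorption introduce the factors $1+2L$ and $1+L$, while the term $\tfrac{M}{m_a}(1+2L)$ records the cost of bounding $\mathbf{a}(v-w,\varphi^c)$ against $m_a$. It remains to control $\norm{\psi}$ from above, since \eqref{bK:infsup} asks for $\tfrac{m_a}{\kappa} d_\alpha(v,w)\norm{\psi}$ rather than $\tfrac{m_a}{\kappa}d_\alpha(v,w)$: here one uses $\norm{\varphi^c} = \norm{v-w}$ together with $\norm{\varphi^{\mathbf a}}=1$ normalized against $\norm{v-w}$, and the triangle inequality, so that $\norm{\psi}\lesssim \norm{v-w}$ up to the same $L$-dependent factors, and division by $\norm{\psi}$ reproduces $\kappa$.

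\textbf{Main obstacle.} The delicate point is the simultaneous absorption: the two parasitic terms must each be dominated by the good term arising from the \emph{other} test direction, and the estimate \eqref{cont-c-cc} only bounds $c_\alpha$ by $\tfrac{1}{\sqrt\alpha}\delta_\alpha\snorm{\cdot}$, so one must carefully relate $\delta_\alpha$ to both $\norm{v-w}$ (via \eqref{delta<enorm} and the definition of $\snorm{\cdot}$) and to the coercive quadratic term $\delta_\alpha^2$. Choosing $\lambda$ and tracking how the $L$-dependence propagates through these absorptions — so that the final constant is exactly $\kappa$ and degenerates in the expected $O(1/\sqrt\alpha)$ way as $\alpha\to0$ — is the technical heart of the argument, and must align with the scaling that makes the perturbations of the Laplacian of size $1/\sqrt\alpha$ in \eqref{Eq:OptimalControlSystem}.
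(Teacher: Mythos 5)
Your plan follows the same route as the paper's proof in outline: the continuity bound is indeed just \eqref{cont-cc}, and the test function for \eqref{bK:infsup} is built from the same two ingredients --- an inf-sup-optimal direction for $\mathbf{a}$ (realized in the paper as $m_a(A^{-1}J_2(v_2-w_2),A^{-*}J_1(v_1-w_1))$ via the Riesz maps $J_i$) plus a multiple $\gamma$ of the flipped direction $\vphi^c\defas(-(v_1-w_1),v_2-w_2)$, followed by one Young inequality and the bound $\norm{\psi}\le(1+\gamma)\norm{v-w}$. However, your structural analysis of what happens on $\vphi^c$ is wrong, and the error is not cosmetic. You assert that $\mathbf{a}(v-w,\vphi^c)$ ``does not have a definite sign and is bounded by $M\norm{v-w}^2$'', and you make the two-way (``simultaneous'') absorption of this term the technical heart of the argument. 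In fact $\mathbf{a}(v-w,\vphi^c)=a(v_1-w_1,v_2-w_2)+a(-(v_1-w_1),v_2-w_2)=0$ identically: the sign flip in the first component makes the two summands of $\mathbf{a}$ cancel exactly. This exact cancellation is the very reason this direction is chosen; the only parasitic term in the entire computation is the $c_\alpha$-contribution on the inf-sup direction, of size $\frac{M}{\sqrt\alpha}\,\delta_\alpha(v,w)\norm{v-w}$ by \eqref{cont-c-cc}, and a single Young inequality with $\epsilon=\frac{L}{1+2L}m_a$ absorbs it, the weight $\gamma=\frac{M}{m_a}(1+2L)$ being tuned so that the $\delta_\alpha^2$-remainder is cancelled exactly by the coercivity \eqref{cont-coercivity-c-cc}.

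Moreover, the scheme you describe could not be carried out as stated: if $\mathbf{a}(v-w,\vphi^c)$ really were an indefinite term of size on the order of $M_a\norm{v-w}^2$, then with weight $\lambda$ on $\vphi^c$ you would need $\lambda M_a+\epsilon<m_a$ to absorb it into the good term $m_a\norm{v-w}^2$, while keeping the coefficient of $\frac{1}{\sqrt\alpha}\delta_\alpha(v,w)^2$ nonnegative after Young's inequality requires $\lambda\ge\frac{ML}{4\epsilon}$; eliminating $\lambda$ yields the necessary condition $\sqrt{MLM_a}\lesssim m_a$, which is violated as soon as $L=M/\sqrt\alpha$ is large, i.e.\ precisely in the regime $\alpha\to0$ that the theorem targets. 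So for small $\alpha$ there would be no admissible choice of $(\lambda,\epsilon)$ at all, let alone one producing the specific constant \eqref{kappa}. Your argument becomes the paper's proof --- and the constants assemble to exactly $\kappa$ --- only after you replace ``indefinite and bounded'' by ``identically zero'' for $\mathbf{a}(v-w,\vphi^c)$, so that a single absorption suffices.
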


\begin{proof}
The first inequality is~\eqref{cont-cc}.  In order to prove the second one, we choose, for fixed $v,w\in V$, 
  \begin{equation*}
    \psi
    =
    m_a(A^{-1}J_2(v_2-w_2), A^{-*}J_1(v_1-w_1)) + \gamma (-(v_1-w_1),(v_2-w_2)),
  \end{equation*}
  for some \(\gamma>0\) to be determined later.
  Here the evaluation maps \(J_i:V_i\to V_i^*\) are defined by
  \(\langle J_i\psi_i,\cdot\rangle_i\defas(\psi_i,\cdot)_i\) for
  \(i=1,2\). With this choice, we obtain
  \begin{multline*}
    b_\alpha(v,\psi)-b_\alpha(w,\psi)\\
    \begin{aligned}
      &=m_a\left( a(v_1-w_1, A^{-*}J_1(v_1-w_1))+a(A^{-1}J_2(v_2-w_2),
        v_2-w_2)\right)
      \\
      &\quad - m_a\left(\Pi_{K}\left(-\tfrac{1}{\sqrt\alpha}
          C^*v_2\right)-\Pi_{K}\left(-\tfrac{1}{\sqrt\alpha}
          C^*w_2\right),C^*A^{-*}J_1(v_1-w_1)\right)_Q
      \\
      &\quad-m_a\tfrac{1}{\sqrt\alpha}\left(I (v_1-w_1),I
        A^{-1}J_2(v_2-w_2)\right)_W
      \\
      &\quad - \gamma \,m_a\left(\Pi_{K}\left(-\tfrac{1}{\sqrt\alpha}
          C^*v_2\right)-\Pi_{K}\left(-\tfrac{1}{\sqrt\alpha}
          C^*w_2\right),C^*(v_2-w_2)\right)_Q
      \\
      &\quad+\gamma\tfrac{1}{\sqrt\alpha}\left(I (v_1-w_1),I
        (v_1-w_1)\right)_W
      \\
      &\ge m_a\|v-w\|^2 - \frac{M}{\sqrt\alpha}
      \,\delta_\alpha(v,w)\|v-w\|+\frac{\gamma}{\sqrt\alpha}\,\delta_\alpha(v,w)^2
      \\
      &\ge
      m_a\left(\|v-w\|+\frac{M}{M_a}\frac1{\sqrt\alpha}\,\delta_\alpha(v,w)\right)\|v-w\|
      -\frac{2M}{\sqrt\alpha}
      \,\delta_\alpha(v,w)\|v-w\|
      \\
      &\quad +\frac{\gamma}{\sqrt\alpha}\delta_\alpha(v,w)^2, 
    \end{aligned}
  \end{multline*}
  where we used \(m_a\le M_a\) as well as the continuity of \(C\) and
  \(I\). Using Young's inequality \(2st\le \epsilon s^2+t^2/\epsilon\)
  with \(\epsilon=\frac{L}{1+2L}m_a>0\), we may bound the critical term
  by
  \begin{align*}
    \frac{2M}{\sqrt\alpha}
    \,\delta_\alpha(v,w)\|v-w\|\le
    \frac{L}{1+2L}m_a\,\|v-w\|^2+\frac{1+2L}{L}\frac{M^2}{m_a\alpha} \delta_\alpha(v,w)^2.
  \end{align*}
  Consequently, choosing
  \begin{align*}
    \gamma=\frac{M}{m_a}\left(1+2L\right),
  \end{align*}
  we arrive at
  \begin{align*}
    \begin{aligned}
      b_\alpha(v,\psi)-b_\alpha(w,\psi)&\ge\frac{1+L}{1+2L}\,m_a\,d_\alpha(v,w)\|v-w\|
      \\
      &\ge \frac1{\kappa} \,m_a\,d_\alpha(v,w)\|\psi\|.
    \end{aligned}
  \end{align*}
  Here the last inequality follows from
  \begin{equation*}
    \|\psi\|\le \left(1+\frac{M}{m_a}(1+2L)\right)\|v-w\|.
    \qedhere
  \end{equation*}
\end{proof}

\section{Relating error and residual}
\label{sec:abstract-apost:res=err}
%
%
This section constitutes the principal part of our a posteriori analysis for the abstract optimal control problem \eqref{min}. A typical approach to such an analysis can be subdivided into the following three steps; cf., e.g., \cite[\S4]{BonCanNocVee:2024} or \cite[\S1.4]{Verfuerth:2013}: given an approximate solution, 
\begin{itemize}
\item relate the error (norm) to a suitable norm of the  so-called residual, a quantity that depends only on data and the approximate solution,
\item split the residual norm, which is typically of dual character, into local contributions,
\item  further split the local contributions into a computable part involving the approximate solution and an oscillatory part depending only on data.
\end{itemize}
This section addresses the first step. It then turns out that the
following two steps hinge only on the particular structure of the state and adjoint equations. Therefore, they are not addressed in general and  postponed to the applications in Section~\ref{sec:applications}.

We shall base our a~posteriori analysis of the optimization problem \eqref{min} on the variational formulation \eqref{vred} of the reduced and rescaled optimality system \eqref{mod-ropt}. Let \(\tilde x=(\tilde u,\tilde z)\in V\) be some approximation of $x=(u,z)$, where $u$ is the exact state and $z$ the (rescaled) exact adjoint state. We define the \emph{residual} in $\tilde{x}$ by
\begin{equation*}
  \Res(\tilde{x})
  \defas
  \begin{pmatrix}
    - \tfrac{1}{\sqrt{\alpha}}  I^* \ud
    \\
    f
  \end{pmatrix}
  -
  \begin{pmatrix}
    -\tfrac{1}{\sqrt{\alpha}}  I^* I &  A^*
    \\
    A &-C\Pi_K(-\tfrac{1}{\sqrt{\alpha}}  C^* \cdot)
  \end{pmatrix}
  \begin{pmatrix}
    \tilde{u} \\
    \tilde{z}
  \end{pmatrix},
\end{equation*}
or, equivalently in variational form, by
\begin{equation*}
  \langle \Res(\tilde x),\vphi\rangle
  =
  \left\langle
    f - A\tilde u + C \Pi_K\left(-\tfrac1{\sqrt\alpha}C^*\tilde z\right), \vphi_2
  \right\rangle_2
  +
  \left\langle
    \tfrac1{\sqrt\alpha}I^*(I\tilde{u}-u_d) - A^*\tilde z, \vphi_1
  \right\rangle_1.
\end{equation*}

In what follows, we shall offer three approaches to relate the  error to the residual $\Res(\tilde{x})$, strengthening  the relationship under increasingly stronger assumptions.
For comparison, it is useful to recall that the assumptions on the state equation imply the following error-residual relationship: if $v \in V_1$ verifies $Av=g$ and $\tilde{v} \in V_1$ is some approximation of $v$, then error and residual norm $\| g - A\tilde{v} \|_{2,*}$ are equivalent:
\begin{align}
  \label{eq:err=ResA}
  \frac1{M_a}\| g-A\tilde{v}\|_{2,*}
  \le
  \|v-\tilde{v}\|_1
  \le
  \frac1{m_a}\| g-A\tilde{v}\|_{2,*}.
\end{align}
(The proof of this follows the lines of the proof of
Theorem~\ref{T:Apost-1} below, replacing the form $b_\alpha$ by the
bilinear form $a$.) Notice that there is a \emph{gap} between the upper and
lower bound for $M_a \gg m_a$, which can be
measured by the ratio $M_a/m_a \geq 1$ of upper to lower bound.

\subsection{Using continuity of control and observation}
\label{sec:direct-error=res}
%
In contrast to the subsequent subsections, here we shall
not assume compactness of  the control operator $C$ and the observation
operator $I$ in addition to the conditions in~Section~\ref{sec:2}, i.e.\ they are just linear and bounded operators.

The residual and error are related through a possibly nonlinear operator. In fact,  since $x=(u,z)$ is the exact solution of \eqref{mod-ropt}, we have the identity
\begin{multline*}
  \Res \begin{pmatrix}
    \tilde{u} \\ \tilde{z}
  \end{pmatrix} 
  =
  \begin{pmatrix}
    -\tfrac{1}{\sqrt{\alpha}}  I^* I &  A^*
    \\
    A &-C\Pi_K(-\tfrac{1}{\sqrt{\alpha}}  C^* \cdot)
  \end{pmatrix}
  \begin{pmatrix}
    u  \\ z 
  \end{pmatrix}
  \\
  {}	-
  \begin{pmatrix}
    -\tfrac{1}{\sqrt{\alpha}}  I^* I &  A^*
    \\
    A &-C\Pi_K(-\tfrac{1}{\sqrt{\alpha}}  C^* \cdot)
  \end{pmatrix}
  \begin{pmatrix}
    \tilde{u} \\ \tilde{z}
  \end{pmatrix},
\end{multline*}
which in variational form reads
\begin{equation}
  \label{eq:RES}
  \langle \Res(\tilde x),\vphi\rangle
  =
  b_\alpha(x,\vphi)-b_\alpha(\tilde x,\vphi),\quad \vphi\in V.
\end{equation}
Following standard arguments, this identity can be combined  with the
properties of the form $b_\alpha$ in Theorem~\ref{T:bK}.  This \emph{direct
  approach} leads to the following relationship between the residual
in the dual norm $\norm{\cdot}_*$, see \eqref{Hilbert-product}, and the \emph{$d_\alpha$-error}, i.e.\ the distance of the states and their approximations in the metric~\eqref{eq:dK}.

\begin{thm}[Bounding the $d_\alpha$-error -- general case]
\label{T:Apost-1}
Let \(x=(u,z)\in V\) be the solution to~\eqref{vred} and $\tilde{u} \in V_2$ some approximate state and $\tilde{z} \in V_1$ some approximate rescaled adjoint state. Writing \(\tilde x = (\tilde{u}, \tilde{z}) \), their $d_\alpha$-error is equivalent to the dual norm of the residual:
  \begin{align*}
    \frac1{M_a}\left\|\Res(\tilde x)\right\|_{*}
    \le
    d_\alpha(x,\tilde x)
    \le
    \frac{\kappa}{m_a}\left\|\Res(\tilde  x)\right\|_{*}
  \end{align*}
  with $\kappa$ from \eqref{kappa}. 
\end{thm}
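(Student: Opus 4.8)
The plan is to combine the error--residual identity \eqref{eq:RES} with the two properties of $b_\alpha$ established in Theorem~\ref{T:bK}. Since the pairing $\langle \Res(\tilde x), \vphi\rangle = b_\alpha(x, \vphi) - b_\alpha(\tilde x, \vphi)$ converts statements about $b_\alpha$ directly into statements about the residual, essentially no work beyond Theorem~\ref{T:bK} is required; this is the Babu\v{s}ka-type pattern of using continuity for one bound and inf-sup stability for the other.

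For the lower bound on $d_\alpha(x,\tilde x)$, I would start from the definition of the dual norm and insert \eqref{eq:RES}:
\begin{equation*}
  \left\|\Res(\tilde x)\right\|_*
  =
  \sup_{0 \ne \vphi \in V} \frac{\langle \Res(\tilde x), \vphi\rangle}{\norm{\vphi}}
  =
  \sup_{0 \ne \vphi \in V} \frac{b_\alpha(x, \vphi) - b_\alpha(\tilde x, \vphi)}{\norm{\vphi}}.
\end{equation*}
The continuity estimate \eqref{bK:Cont} bounds each quotient by $M_a\, d_\alpha(x, \tilde x)$ uniformly in $\vphi$, so that $\left\|\Res(\tilde x)\right\|_* \le M_a\, d_\alpha(x, \tilde x)$, which is the left inequality.

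For the upper bound, I would invoke the inf-sup property \eqref{bK:infsup} applied to $v = x$ and $w = \tilde x$: it furnishes a nonzero $\psi \in V$ with
\begin{equation*}
  \frac{m_a}{\kappa}\, d_\alpha(x, \tilde x)\, \norm{\psi}
  \le
  b_\alpha(x, \psi) - b_\alpha(\tilde x, \psi)
  =
  \langle \Res(\tilde x), \psi\rangle
  \le
  \left\|\Res(\tilde x)\right\|_*\, \norm{\psi}.
\end{equation*}
Dividing by $\norm{\psi} > 0$ gives $\frac{m_a}{\kappa}\, d_\alpha(x, \tilde x) \le \left\|\Res(\tilde x)\right\|_*$, which is the right inequality.

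The only point requiring care---rather than a genuine obstacle---is that $b_\alpha$ fails to be bilinear, owing to the projection $\Pi_K$ inside $c_\alpha$. This is harmless here: both \eqref{bK:Cont} and \eqref{bK:infsup} are phrased in terms of the \emph{difference} $b_\alpha(v,\cdot) - b_\alpha(w,\cdot)$, and identity \eqref{eq:RES} delivers exactly such a difference. Hence the argument goes through verbatim, and the asymmetry of the constants $M_a$ and $\kappa/m_a$ merely inherits the gap already present in Theorem~\ref{T:bK}.
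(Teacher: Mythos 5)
Your proposal is correct and follows essentially the same argument as the paper's proof: both directions use the identity \eqref{eq:RES} combined with the continuity bound \eqref{bK:Cont} for the lower bound and the inf-sup property \eqref{bK:infsup} (with the test function $\psi$ it furnishes) for the upper bound, exactly as in the paper. Your closing remark about $\Pi_K$ being harmless because both properties are stated for differences $b_\alpha(v,\cdot)-b_\alpha(w,\cdot)$ is also accurate and consistent with the paper's setup.
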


\begin{proof}
  The identity~\eqref{eq:RES} and the Lipschitz continuity \eqref{bK:Cont} imply that, for any \(\vphi\in V\), 
  \begin{align*}
    |\langle \Res(\tilde x), \vphi\rangle|
    = 
    |b_\alpha(x,\vphi) - b_\alpha(\tilde x,\vphi)| 
    \leq
    M_a \,d_\alpha(x,\tilde x) \norm{\vphi}.
  \end{align*}
  Dividing by \(\norm{\vphi}\) and taking the supremum over all \(0\neq\vphi\in V\) proves the lower bound.

  For the upper bound, it follows from~\eqref{bK:infsup} and \eqref{eq:RES} that there exists \(0\neq\psi\in V\) such that
  \begin{align*}
    \frac{m_a}{\kappa}d_\alpha(x,\tilde x)\|\psi\|
    \le
    b_\alpha(x,\psi) - b_\alpha(\tilde x,\psi)
    =
    \langle \Res(\tilde x), \psi\rangle
    \le
    \|\Res(\tilde x)\|_* \|\psi\|.
  \end{align*}
  Thus dividing by $\norm{\psi}>0$ finishes the proof.
\end{proof}

\begin{rem}[Gap in bounding $d_\alpha$-error -- general case]
  \label{R:vanish-regularization}
  The upper and lower bound in Theorem~\ref{T:Apost-1} present the gap $(M_a \kappa) /m_a$. With respect to \eqref{eq:err=ResA} concerning the state equation, this gap is amplified by the factor $\kappa$ from Theorem~\ref{T:bK}.

  For the limit  $\alpha\to0$ of the Tikhonov regularization parameter, with $I$ and $C$ fixed, we have $L =\frac{M}{\sqrt{\alpha}}\to \infty$ and, therefore, the amplification factor $\kappa$ verifies 
  \begin{equation}
    \label{kappa;vanish-regularization}
    \kappa
    =
    \left( \frac{4M^2}{m_a} + o(1) \right) \frac{1}{\sqrt{\alpha}}
    \quad\text{as}\quad
    \alpha \to 0.
  \end{equation}
\end{rem}

Theorem~\ref{T:Apost-1} considers only approximations of  the state and the (rescaled) adjoint state. As outlined above, this arises from the direct application of Theorem~\ref{T:bK}, which analyses features of the rescaled and reduced optimality system \eqref{mod-ropt}. The absence of an explicit control in a discretization of \eqref{mod-ropt} then corresponds to assuming that the approximate control is given by
\( \Pi_K\big(-\frac1{\sqrt\alpha}C^* \tilde z\big) \).
Therefore, we can combine Theorem~\ref{T:Apost-1} with suitable triangle inequalities to consider the complete optimality system \eqref{mod-opt} with the \emph{augmented $d_\alpha$-error}
\begin{equation*}
  d_\alpha(x,\tilde x)+\frac{M}{M_a}\left\|\tilde q-q\right\|_Q,
\end{equation*}
where $\tilde{q}$ is some approximation of the exact control
\begin{equation}
  \label{exact-control}
  q \defas \Pi_K \big( -\frac1{\sqrt\alpha}C^* z \big).
\end{equation}
Of course, the bounds then involve an additional residual of the approximate control.

\begin{cor}[Bounding the augmented $d_\alpha$-error -- general case]
\label{C:gControl}
In addition to the assumptions of Theorem~\ref{T:Apost-1} and \eqref{exact-control}, let $\tilde{q} \in K$ be some approximate control. Then the augmented $d_\alpha$-error of $(\tilde{u},\tilde{z},\tilde{q})$ is bounded from above and below by
  \begin{multline*}
    d_\alpha(x,\tilde x)+\frac{M}{M_a}\left\|q- \tilde{q} \right\|_Q
    \eqsim
    \left(
      \left\| f - A\tilde u + C\tilde q \right\|_{2,*}^2
      +
      \left\| A^*\tilde z -\tfrac1{\sqrt\alpha}I^* (I\tilde u-u_d)\right\|_{1,*}^2
    \right)^{\frac12}
    \\
    +\frac{M}{M_a} \left\|
      \tilde q - \Pi_K \left( -\tfrac1{\sqrt\alpha}C^*\tilde  z \right)
    \right\|_Q,
  \end{multline*}
  where the constants hidden in \(\eqsim\) present the same asymptotics for $\alpha\to0$ as those in Theorem~\ref{T:Apost-1}.
\end{cor}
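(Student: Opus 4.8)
The plan is to reduce everything to Theorem~\ref{T:Apost-1} and then bridge the two estimates by triangle inequalities. The starting point is that the variational form of $\Res(\tilde x)$ given at the beginning of this section evaluates the state residual with the \emph{implicit} control $\Pi_K(-\tfrac1{\sqrt\alpha}C^*\tilde z)$, so that
\[
  \left\| \Res(\tilde x) \right\|_*
  =
  \left(
    \left\| f - A\tilde u + C\Pi_K(-\tfrac1{\sqrt\alpha}C^*\tilde z) \right\|_{2,*}^2
    +
    \left\| A^*\tilde z - \tfrac1{\sqrt\alpha} I^*(I\tilde u - u_d) \right\|_{1,*}^2
  \right)^{1/2}.
\]
The only discrepancy between this and the asserted estimator, call it $R$, is that $R$ carries $C\tilde q$ in place of $C\Pi_K(-\tfrac1{\sqrt\alpha}C^*\tilde z)$. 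Writing $S \defas \| \tilde q - \Pi_K(-\tfrac1{\sqrt\alpha}C^*\tilde z) \|_Q$ for the extra control residual appearing in the statement, the boundedness of $C$ with constant $M_C$ together with the $\ell^2$-triangle inequality gives $\big| \, \|\Res(\tilde x)\|_* - R \, \big| \le M_C\, S$.

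The control error is handled separately. Since $q = \Pi_K(-\tfrac1{\sqrt\alpha}C^*z)$ by \eqref{exact-control} and $\Pi_K$ is Lipschitz with constant $1$, inserting $\Pi_K(-\tfrac1{\sqrt\alpha}C^*\tilde z)$ and using definition~\eqref{eq:deltaK} of $\delta_\alpha$ yields
\[
  \left\| q - \tilde q \right\|_Q
  \le
  \tfrac1{\sqrt\alpha} \delta_\alpha(x,\tilde x) + S
  \qquad\text{and symmetrically}\qquad
  S \le \tfrac1{\sqrt\alpha}\delta_\alpha(x,\tilde x) + \left\| q - \tilde q \right\|_Q.
\]
Because $\tfrac{M}{M_a}\tfrac1{\sqrt\alpha}\delta_\alpha(x,\tilde x)$ is one of the two summands of $d_\alpha(x,\tilde x)$ in~\eqref{eq:dK}, the quantity $\tfrac1{\sqrt\alpha}\delta_\alpha(x,\tilde x)$ is controlled by $\tfrac{M_a}{M} d_\alpha(x,\tilde x)$. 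This is exactly what allows $S$ and $\|q-\tilde q\|_Q$ to be traded for one another at the cost of only a multiple of $d_\alpha(x,\tilde x)$.

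Assembling the two halves is then routine. For the upper bound on the augmented error I would invoke $d_\alpha(x,\tilde x) \le \tfrac{\kappa}{m_a}\|\Res(\tilde x)\|_* \le \tfrac{\kappa}{m_a}(R + M_C S)$ from Theorem~\ref{T:Apost-1} and bound $\tfrac{M}{M_a}\|q-\tilde q\|_Q \le d_\alpha(x,\tilde x) + \tfrac{M}{M_a}S$ via the preceding paragraph; the resulting constant in front of $R + \tfrac{M}{M_a}S$ scales like $\kappa$, that is, like $1/\sqrt\alpha$ as $\alpha\to0$, matching Theorem~\ref{T:Apost-1}. For the lower bound I would use $R \le \|\Res(\tilde x)\|_* + M_C S \le M_a\, d_\alpha(x,\tilde x) + M_C S$ and then absorb $S$ into $d_\alpha(x,\tilde x) + \tfrac{M}{M_a}\|q-\tilde q\|_Q$ by the second paragraph, which produces $\alpha$-independent constants. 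The main obstacle is purely bookkeeping: the weight $\tfrac{M}{M_a}$ on both $\|q-\tilde q\|_Q$ and $S$ must be kept synchronised with the factor $\tfrac{M}{M_a}\tfrac1{\sqrt\alpha}\delta_\alpha$ inside $d_\alpha$, so that the trade $S \leftrightarrow \|q-\tilde q\|_Q$ never introduces a spurious $\alpha$-dependence; this is precisely why the corollary weights the control contributions by $\tfrac{M}{M_a}$ rather than by an arbitrary constant.
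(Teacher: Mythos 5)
Your proposal is correct and follows essentially the same route as the paper: triangle inequalities to swap $C\tilde q$ for the implicit control $C\Pi_K(-\tfrac1{\sqrt\alpha}C^*\tilde z)$, the trade between $\|\tilde q-\Pi_K(-\tfrac1{\sqrt\alpha}C^*\tilde z)\|_Q$ and $\|q-\tilde q\|_Q$ at the cost of $\tfrac1{\sqrt\alpha}\delta_\alpha(x,\tilde x)\le\tfrac{M_a}{M}d_\alpha(x,\tilde x)$, and then Theorem~\ref{T:Apost-1} in both directions. The paper writes out only the lower bound and leaves the upper one as ``similar, noting $\kappa M_a/m_a\ge1$''; your write-up makes both assemblies explicit but uses exactly the same ingredients and yields the same $\alpha$-asymptotics.
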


\begin{proof}
  We verify only the lower bound; the upper one follows from similar arguments upon noting \( (\kappa M_a) / m_a \ge 1\). First notice that the triangle inequality ensures
  \begin{align*}
    \left\|f-A\tilde u +C\tilde q\right\|_{2,*}
    \le
    \left\| f-A\tilde u + C \Pi_K\left( -\tfrac1{\sqrt\alpha}C^* \tilde z\right) \right\|_{2,*}
    +
    M \left\| \tilde q-\Pi_K\left(-\tfrac1{\sqrt\alpha}C^* \tilde z\right)\right\|_Q
  \end{align*}
  and, combined with the definition \eqref{exact-control} of $q$,
  \begin{multline*}
    M \left\| \tilde q-\Pi_K\left(-\tfrac1{\sqrt\alpha}C^* \tilde z\right) \right\|_Q
    \\
    \begin{aligned}
      &\le
      M\left\| \tilde q- q\right\|_Q
      +
      M\left\|
        \Pi_K\left(-\tfrac1{\sqrt\alpha}C^* \tilde z\right)
        -\Pi_K\left(-\tfrac1{\sqrt\alpha}C^* z\right)
      \right\|_Q
      \\
      &\le M\left\|\tilde
        q-q\right\|_Q+\frac{M}{\sqrt{\alpha}}\,\delta_\alpha(x,\tilde
      x)\le M\left\|\tilde q-q\right\|_Q+M_a d_\alpha(x,\tilde x) .
    \end{aligned}
  \end{multline*}
  Hence, the lower bound follows from the one in Theorem~\ref{T:Apost-1} without additional appearance of the parameter $\alpha$.
\end{proof}

It is instructive to compare our approach based upon the reduced and rescaled optimality system with the previous a~posteriori analysis in \cite{KohlsRoeschSiebert:2014,KohlsRoeschSiebert:2012}.

\begin{rem}[Comparison with \cite{KohlsRoeschSiebert:2014,KohlsRoeschSiebert:2012}]
  \label{R:Comparison-with-KRS}
  %
  We refer in particular to \cite[Theorem~2.2]{KohlsRoeschSiebert:2014}, the proof of which presents the
  explicit dependencies between error terms and residual. Compared to our approach, there are two main differences
  regarding the error notion: First, in contrast to~\eqref{rescaling-of-adjoint-var}, the adjoint state in
  \cite{KohlsRoeschSiebert:2014} is not rescaled and the same holds true for the residual of the adjoint equation. Second,
  the error notion in~\cite{KohlsRoeschSiebert:2014} does not explicitly involve the
  observation error.

  For $\alpha\to0$, the gap between upper and lower bound in~\cite[Theorem~2.2]{KohlsRoeschSiebert:2014} grows like \(O(1/\alpha^2)\) in general and reduces to $O(1/\alpha)$ if the residual of the approximate control vanishes as in the reduced optimality system. In contrast, the respective gaps in Corollary~\ref{C:gControl} and Theorem~\ref{T:Apost-1} are only \(O(1/\sqrt{\alpha})\). This slower growth arises from the properties of the form $b_\alpha$ in Theorem~\ref{T:bK} about the reduced and rescaled optimality system. The amplified growth in~\cite[Theorem~2.2]{KohlsRoeschSiebert:2014} in the general case results from the fact that the absence of the observation error in the error notion has to be compensated by bounding in terms of the stability of the observation operator and the error of the adjoint state.
\end{rem}

\subsection{Using compactness of control and observation}
\label{sec:apost-ellRecon}
%
In this subsection, we assume that both the control operator $C$ and
the observation operator $I$ are bounded and compact. This situation
is often encountered in applications;  see Section~\ref{sec:applications} for two examples.

Let us first outline the idea how to take advantage of the compactness assumptions. To this end, we may split the operator of the reduced optimality system \eqref{mod-ropt} as follows:
\begin{multline*}
  \begin{pmatrix}
    -\tfrac{1}{\sqrt{\alpha}}  I^* I &  A^*
    \\
    A &-C\Pi_K(-\tfrac{1}{\sqrt{\alpha}}  C^* \cdot)
  \end{pmatrix}
  \\
  =
  \begin{pmatrix}
    0 &  A^*
    \\
    A & 0
  \end{pmatrix}
  +
  \begin{pmatrix}
    -\tfrac{1}{\sqrt{\alpha}}  I^* I &  0
    \\
    0 &-C\Pi_K(-\tfrac{1}{\sqrt{\alpha}}  C^* \cdot)
  \end{pmatrix},
\end{multline*}
where the first term on the right-hand side is $\alpha$-independent,
while the second can be viewed as an $\alpha$-dependent, compact
perturbation. Since the first term is invertible, we can apply this
operator splitting to the residual $\Res(\tilde{x})$ and translate it
into a splitting of the error $x-\tilde{x}$. More precisely, applying
the inverse of the first operator on the right-hand side to the residual, we define
an auxiliary function $R\tilde{x} \in V$ by 
\begin{align}
  \label{eq:MotivationReconstruction}
  R\tilde x-\tilde x
  &
    =\begin{pmatrix}
      0 &  A^{-1}
      \\
      A^{-*} & 0
    \end{pmatrix}\Res(\tilde x)
\end{align}
and write 
\[
  x-\tilde x
  =
  (x-R\tilde x) + (R\tilde x -\tilde x),
\]
where Theorem~\ref{T:Apost-1} and \eqref{eq:err=ResA} together with its `adjoint' variant, respectively, imply
\begin{gather}
  \label{equivalence-for-x-Rtildex}
  \frac{1}{M_a}\norm{\Res(R\tilde{x})}_{*}
  \leq
  d_\alpha(x,R\tilde{x})
  \leq
  \frac{\kappa}{m_a} \norm{\Res(R\tilde{x})}_{*},
  \\ \label{eq:apostr-Recon} 
  \frac{1}{M_a}\norm{\Res(\tilde{x})}_{*}
  \leq
  \| \tilde{x} - R\tilde{x} \|
  \leq
  \frac{1}{m_a} \norm{\Res(\tilde{x})}_{*}. 
\end{gather}
Observe that~\eqref{eq:apostr-Recon} relies only on properties of the
state equation and involves the known residual $\Res\tilde{x}$. In
contrast,~\eqref{equivalence-for-x-Rtildex} involves $\kappa$ and so
the regularization parameter $\alpha$ as well as the unknown residual
$\Res(R\tilde{x})$. However, applying the operator of the optimality
system \eqref{mod-ropt} to the decomposition of the error reveals 
\begin{equation}
  \label{advantage-thanks-to-compactness}
  \Res(R\tilde{x})
  =
  \begin{pmatrix}
    \tfrac{1}{\sqrt{\alpha}}  I^* I &  0
    \\
    0 &C\Pi_K(-\tfrac{1}{\sqrt{\alpha}}  C^* \cdot)
  \end{pmatrix}
  (R\tilde{x}-\tilde{x}).
\end{equation}
Therefore, we can bound the critical error $x-R\tilde{x}$ in terms of the error $\tilde{x}-R\tilde{x}$, which is accessible through the known residual $\Res(\tilde{x})$. More importantly, in view of the compactness properties of the operator matrix in \eqref{advantage-thanks-to-compactness}, we may take a norm of $R\tilde{x}-\tilde{x}$ that is weaker than $\norm{\cdot}$ and, thus, allows for faster convergence.

Let us put  this \emph{approach based on compactness} into the context of previous and related techniques.
\begin{rem}[Connection with \cite{KohlsRoeschSiebert:2014}, elliptic reconstruction, Wheeler's trick, and Schatz's argument]
\label{R:connections}
The above construction of the auxiliary function $R\tilde{x}$ is also implicitly used in \cite{KohlsRoeschSiebert:2014}, but as simple perturbation argument without exploiting the compactness assumptions through \eqref{advantage-thanks-to-compactness}. The use with \eqref{advantage-thanks-to-compactness} is quite similar to the so-called elliptic reconstruction of \cite{MakridakisNochetto:2003, LakkisMakridakis:2006} in the context of parabolic equations.  The error decomposition with the elliptic reconstruction can be viewed as the a~posteriori counterpart of the error decomposition in Wheeler's trick \cite{Wheeler:1973} by means of the Ritz projection. Similarly, but incorporating the role of compactness, the above approach with $R\tilde{x}$ is the a~posteriori counterpart of the a~priori analysis in \cite{GaKrVeWo:2020} based upon Schatz's argument  \cite{Schatz:1974}.
\end{rem}

We proceed in variational terms and, in line with \eqref{eq:MotivationReconstruction},  define the auxiliary operator \(R=(R_1,R_2):V\to V\) for $\tilde{x}=(\tilde{u},\tilde{z})$ by
\begin{subequations}
  \label{eq:Rell}
  \begin{align}
    \label{eq:Rell1}
    \forall \vphi_2\in V_2\qquad a(R_1\tilde x, \vphi_2)
    &=
      \langle f, \vphi_2\rangle_2
      +
      \left(\Pi_K\left(-\tfrac1{\sqrt\alpha}C^* \tilde z\right), C^*\vphi_2\right)_Q,
    \\ \label{eq:Rell2}
    \forall \vphi_1\in V_1\qquad a(\vphi_1,R_2\tilde x)
    &=
      \tfrac1{\sqrt\alpha} \left(I\tilde u -u_d,I\vphi_1\right)_W.
  \end{align}
\end{subequations}
The crucial relationship \eqref{advantage-thanks-to-compactness} then amounts to 
\begin{equation}
  \label{Eq:x-Rtildex}
  \langle\Res(R\tilde x),\vphi\rangle
  =
  b_\alpha(x,\vphi)-b_\alpha(R\tilde x,\vphi)
  =
  c_\alpha(R\tilde x,\vphi) - c_\alpha(x,\vphi),
\end{equation}
whence the critical error part is bounded with the additional help of \eqref{equivalence-for-x-Rtildex} by 
\begin{align}
  \label{Eq:APostReminder}
  \begin{multlined}
    \frac{m_a^2}{\kappa^2}\, d_\alpha(x,R\tilde x)^2
    \le
    \|\Res(R\tilde x)\|_*^2
    \\
    \begin{aligned}
      &=
      \left\|
        C\Pi_K\left(-\frac{C^*R_2\tilde x}{\sqrt\alpha}\right)
        -
        C\Pi_K\left(-\frac{C^*\tilde z}{\sqrt\alpha}\right)\right\|_{2,*}^2
      +
      \left\| \frac1{\sqrt\alpha}I^*I(\tilde u-R_1\tilde x)\right\|_{1,*}^2
      \\
      &\asdef
      \frac{M^2}{\alpha}
      \delta_\alpha^*(R\tilde x,\tilde x)^2.
    \end{aligned}
  \end{multlined}
\end{align}
The approach based on compactness then leads to the following alternative to Theorem~\ref{T:Apost-1} of the direct approach.

\begin{thm}[Bounding the $d_\alpha$-error -- compact case]
\label{T:compact-case}
Let \(x=(u,z)\in V\) be the solution to~\eqref{vred} and $\tilde{u} \in V_2$ be some approximate state and $\tilde{z} \in V_1$ some approximate rescaled adjoint state. Writing \(\tilde x = (\tilde{u}, \tilde{z}) \), their error is bounded from above and below as follows:
  \begin{align*}
    d_\alpha(x,\tilde x)
    \le
    \frac1{m_a} \left(
    1 + \left(\kappa+\frac{m_a}{M_a}\right) \frac{M}{\sqrt\alpha} \,
    \frac{\delta_\alpha^*(R\tilde x,\tilde x)}{\|\Res(\tilde x)\|_*}
    \right) \|\Res(\tilde x)\|_*
  \end{align*}
  and
  \begin{align*}
    \frac1{M_a}\|\Res(\tilde x)\|_*
    \le
    d_\alpha(x,\tilde x).
  \end{align*}
\end{thm}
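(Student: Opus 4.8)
The plan is to prove the two inequalities separately, starting with the easier lower bound and then building the upper bound from the error decomposition $x - \tilde x = (x - R\tilde x) + (R\tilde x - \tilde x)$ together with the triangle inequality for $d_\alpha$.

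\emph{Lower bound.} This is immediate and does not use compactness at all. The lower bound in Theorem~\ref{T:Apost-1} applied directly to $\tilde x$ already gives $\frac{1}{M_a}\|\Res(\tilde x)\|_* \le d_\alpha(x,\tilde x)$, so there is nothing further to do here.

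\emph{Upper bound.} First I would invoke the triangle inequality for the metric $d_\alpha$, writing
\begin{equation*}
  d_\alpha(x,\tilde x)
  \le
  d_\alpha(x,R\tilde x) + d_\alpha(R\tilde x, \tilde x).
\end{equation*}
For the second term, I would use the definition \eqref{eq:dK} of $d_\alpha$ together with \eqref{delta<enorm} and the Lipschitz bound on $\Pi_K$; since $\delta_\alpha(R\tilde x,\tilde x) \le \snorm{R\tilde x - \tilde x}$ and (recalling $M/M_a \le$ the relevant scaling) one expects $d_\alpha(R\tilde x,\tilde x) \le (1 + \frac{M}{M_a}\frac{1}{\sqrt\alpha})\|R\tilde x - \tilde x\|$ or a similar bound controlled by $\|\Res(\tilde x)\|_*$ via \eqref{eq:apostr-Recon}. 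For the first term $d_\alpha(x,R\tilde x)$, I would use \eqref{Eq:APostReminder}, which bounds it in terms of $\frac{\kappa}{m_a}\|\Res(R\tilde x)\|_* = \frac{\kappa}{m_a}\frac{M}{\sqrt\alpha}\delta_\alpha^*(R\tilde x,\tilde x)$. Assembling these, factoring out $\frac{1}{m_a}\|\Res(\tilde x)\|_*$, and identifying the coefficient $\left(\kappa + \frac{m_a}{M_a}\right)$ should produce exactly the claimed bound.

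\emph{Main obstacle.} The delicate point is the precise bookkeeping of the constants so that the two contributions combine into the single factor $\left(\kappa + \frac{m_a}{M_a}\right)\frac{M}{\sqrt\alpha}\frac{\delta_\alpha^*(R\tilde x,\tilde x)}{\|\Res(\tilde x)\|_*}$. In particular, the term $d_\alpha(R\tilde x,\tilde x)$ must be re-expressed: its $\|R\tilde x - \tilde x\|$ part is controlled by $\|\Res(\tilde x)\|_*$ through \eqref{eq:apostr-Recon} (contributing the ``$1$'' and the $\frac{m_a}{M_a}$ piece after dividing by $m_a$), while its $\delta_\alpha$ part must be recognized as bounded by $\delta_\alpha^*$ to merge with the $\kappa$-term from $d_\alpha(x,R\tilde x)$. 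The hard part is therefore not any single estimate but ensuring that $\delta_\alpha(R\tilde x,\tilde x)$ and $\delta_\alpha^*(R\tilde x,\tilde x)$ are compatible enough that both pieces scale with the same quantity $\frac{M}{\sqrt\alpha}\delta_\alpha^*(R\tilde x,\tilde x)$, so that the stated clean coefficient emerges.
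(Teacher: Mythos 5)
Your skeleton is the same as the paper's: the lower bound is quoted from Theorem~\ref{T:Apost-1}, and the upper bound starts from the triangle inequality $d_\alpha(x,\tilde x)\le d_\alpha(x,R\tilde x)+d_\alpha(R\tilde x,\tilde x)$, treats $d_\alpha(x,R\tilde x)$ by \eqref{Eq:APostReminder} and $\norm{R\tilde x-\tilde x}$ by \eqref{eq:apostr-Recon}. The genuine gap is the step you defer to ``bookkeeping'': by the definition \eqref{eq:dK}, $d_\alpha(R\tilde x,\tilde x)=\norm{R\tilde x-\tilde x}+\tfrac{M}{M_a}\tfrac{1}{\sqrt\alpha}\,\delta_\alpha(R\tilde x,\tilde x)$, and neither of your two suggestions controls the last term by $\tfrac{M}{M_a}\tfrac{1}{\sqrt\alpha}\,\delta_\alpha^*(R\tilde x,\tilde x)$ as the statement requires. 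Your first suggestion, $\delta_\alpha(R\tilde x,\tilde x)\le\snorm{R\tilde x-\tilde x}\le M\norm{R\tilde x-\tilde x}$ (note the factor $M$ you dropped) followed by \eqref{eq:apostr-Recon}, is valid but yields
\begin{equation*}
  d_\alpha(x,\tilde x)
  \le
  \frac1{m_a}\left(1+\frac{M^2}{M_a\sqrt\alpha}\right)\norm{\Res(\tilde x)}_*
  +\frac{\kappa}{m_a}\frac{M}{\sqrt\alpha}\,\delta_\alpha^*(R\tilde x,\tilde x),
\end{equation*}
where the coefficient of $\norm{\Res(\tilde x)}_*$ is $O(1/\sqrt\alpha)$ instead of $1$; this is exactly the ``compactness not exploited'' regression described in Remark~\ref{R:abstractCompact1} and is strictly weaker than the claimed bound. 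Your second suggestion, that $\delta_\alpha(R\tilde x,\tilde x)$ ``must be recognized as bounded by $\delta_\alpha^*(R\tilde x,\tilde x)$'', is false in general: comparing \eqref{eq:deltaK} with the definition of $\delta_\alpha^*$ in \eqref{Eq:APostReminder}, the continuity of $C$ and $I^*$ gives only $\delta_\alpha^*\le\delta_\alpha$, and since $C$ and $I$ are compact there is no reverse inequality with any constant (in the distributed-control example it would amount to bounding an $L^2$-norm by an $H^{-1}$-norm).

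You should know, however, that the point where you got stuck is precisely where the paper's own proof is defective: it asserts, as an equality, $d_\alpha(R\tilde x,\tilde x)=\norm{R\tilde x-\tilde x}+\tfrac{M}{M_a}\tfrac{1}{\sqrt\alpha}\,\delta_\alpha^*(R\tilde x,\tilde x)$, silently substituting $\delta_\alpha^*$ for the $\delta_\alpha$ demanded by \eqref{eq:dK}; that substitution is the same false inequality. Two legitimate repairs exist. (i) State the upper bound with $\delta_\alpha(R\tilde x,\tilde x)$ in place of $\delta_\alpha^*(R\tilde x,\tilde x)$; then the argument goes through verbatim (use $\delta_\alpha^*\le\delta_\alpha$ also in the $\kappa$-term), and the applications are untouched, since Lemma~\ref{L:apost-deltaK} bounds $\delta_\alpha$ anyway, cf.~\eqref{Eq:delta<L2norm}. (ii) Keep $\delta_\alpha^*$ at the price of worse constants: testing the first inequality of \eqref{cont-coercivity-c-cc} with $v=R\tilde x$, $w=\tilde x$, $\vphi=(-(R_1\tilde x-\tilde u),R_2\tilde x-\tilde z)$, and using \eqref{advantage-thanks-to-compactness} together with $\norm{\Res(R\tilde x)}_*=\tfrac{M}{\sqrt\alpha}\delta_\alpha^*(R\tilde x,\tilde x)$ from \eqref{Eq:APostReminder}, one obtains $\delta_\alpha(R\tilde x,\tilde x)^2\le M\,\delta_\alpha^*(R\tilde x,\tilde x)\,\norm{R\tilde x-\tilde x}$; Young's inequality and \eqref{eq:apostr-Recon} then give a bound of the claimed shape, but with $1$ replaced by $\tfrac32$ and with a prefactor of order $\kappa$ (rather than $\kappa+\tfrac{m_a}{M_a}$) in front of $\tfrac{M}{\sqrt\alpha}\delta_\alpha^*$. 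With either repair your outline becomes a complete proof; as written, both your argument and the paper's leave the stated constants unjustified.
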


\begin{proof}
  The lower bound is a repetition from Theorem~\ref{T:Apost-1} and it remains to show the upper bound. The triangle inequality and the definition \eqref{eq:dK} of $d_\alpha$ yield
  \begin{align*}
    d_\alpha(x,\tilde x)
    &\le
      d_\alpha(x,R\tilde x) + d_\alpha(R\tilde x,\tilde x)
    \\
    &=
      d_\alpha(x,R\tilde x)+\|R\tilde x-\tilde x\|
      +
      \frac{M}{M_a} \frac1{\sqrt\alpha} \delta_\alpha^*(R\tilde x,\tilde x).
  \end{align*}
  Consequently, \eqref{Eq:APostReminder} and \eqref{eq:apostr-Recon} ensure the upper bound and the proof is finished.
\end{proof}

The following three remarks elucidate why Theorem~\ref{T:compact-case} is a valid alternative to Theorem~\ref{T:Apost-1} of the direct approach.

\begin{rem}[Using compactness]
\label{R:partial-compactness}
The definition \eqref{Eq:APostReminder} of $\delta^*_\alpha(R\tilde{x},\tilde{x})$ offers all available compactness, given by the operators $C$, $I$ and their
adjoints. Recall that this compactness will allow to choose a norm
for $R\tilde{x}-\tilde{x}$ that is weaker than $\norm{\cdot}$ and
enjoys faster convergence. Such a faster convergence is usually
ensured with the help of a duality argument  depending on regularity
properties of the state and adjoint equation as well as on
approximation properties of the discretization providing the
discrete solution. In particular, it may happen that if we insert
the bound 
\begin{equation*}
    \delta^*_\alpha(R\tilde{x},\tilde{x})
    \leq
    \delta_\alpha(R\tilde{x},\tilde{x})
  \end{equation*}
  in the upper bound of Theorem~\ref{T:compact-case} and exploit only the compactness in the latter term, the faster convergence of $\delta^*_\alpha(R\tilde{x},\tilde{x})$ is already fully captured. See also Remark~\ref{R:limitations} below.
\end{rem}

\begin{rem}[Gap in bounding $d_\alpha$-error -- compact case]
  \label{R:abstractCompact1}
  If we do not exploit the compactness assumptions by inserting the bound
  \begin{align*}
    \delta_\alpha^*(R\tilde x,\tilde x)
    \le
    \delta_\alpha(R\tilde x,\tilde x)
    \le
    M\|\tilde x-R\tilde x\|
  \end{align*}
  in the upper bound of Theorem~\ref{T:compact-case}, the gap between
  its upper and lower bound is
  \(O(\kappa/\sqrt{\alpha})=O(1/\alpha)\) as \(\alpha\to 0\);
  compare with Remark~\ref{R:vanish-regularization}.  This illustrates
  that the upper bound of Theorem~\ref{T:compact-case} improves on the
  one in Theorem~\ref{T:Apost-1} only if
  $\delta^*_\alpha(R\tilde{x},\tilde{x})$ is relatively small with
  respect to $\norm{\Res(\tilde{x})}$. The significance of
  Theorem~\ref{T:compact-case} thus hinges on the aforementioned
  effect of the compactness assumptions. Indeed, if 
  \begin{align*}
    \frac{\delta^*_\alpha(R\tilde x,\tilde x)}
    {\|\tilde x-R\tilde x\|}
    \to
    0
    \qquad\text{for }\tilde  x\to x \text{ and fixed }\alpha>0,
  \end{align*}
  the gap 
  converges to \(M_a/m_a\), viz.\  the gap in the error-residual relation
  \eqref{eq:err=ResA} for the state equation. Finally, let us consider
  the situation when the Tikhonov regularization $\alpha$ varies and
  suppose that we can construct $\tilde{x}$ such that 
  \begin{align}
    \label{Eq:Quotient=alpha}
    \frac{\delta^*_\alpha(R\tilde x,\tilde x)}
    {\|\Res(\tilde x)\|_*}
    \lesssim
    \alpha.
  \end{align}
  Then the gap is \(O(1)\), uniformly in $\alpha$.
\end{rem}

\begin{rem}[Compactness and lower bound for $d_\alpha$-error]
\label{R:compactness-and-low-bd}
The lower bound in Theorem~\ref{T:compact-case} does not exploit the compactness assumptions on control and observation. Note that it does not dependent on $\alpha$ and its form already coincides with the lower bound in \eqref{eq:err=ResA} for the state equation. These two properties suggest that it cannot be improved by involving terms like $\delta^*_\alpha$. Numerical results corroborate this suspicion; see Figure~\ref{F:rates-distributed-control-example}~(A).
\end{rem}

\subsection{Using compactness  of unconstrained control and observation}
\label{sec:apost-unconstraint}
%
In this subsection, we assume, in addition to the compactness of the control operator $C$ and the observation operator $I$, that there are no constraints for the control, i.e.\  
we have \(K=Q\) and therefore  \(\Pi_K=\operatorname{id}_Q\).

In view of these assumptions, the abstract optimal control problem \eqref{min} is quadratic and the reduced and rescaled optimality system \eqref{vred} is linear. Correspondingly, the form
\begin{equation*}
  \begin{aligned}
    b_\alpha(v,\vphi)
    =
    a(v_1,\vphi_2)
    +
    a(\vphi_1,v_2)
    +
    \tfrac{1}{\sqrt\alpha}\left(C^*v_2,C^*\vphi_2\right)_Q
    -
    \tfrac{1}{\sqrt\alpha}\left(I v_1,I\vphi_1\right)_W
  \end{aligned}
\end{equation*}
is bilinear and symmetric and
\begin{equation}
  \label{norm_alpha}
  \|v-w\|_\alpha
  :=
  d_\alpha(v,w)
  =
  \|v-w\| + \frac1{\sqrt{\alpha}} \frac{M}{M_a} |v-w|
\end{equation}
is a norm, where $\snorm{\cdot}$ is given in \eqref{enorm}. Hence, combing Theorem~\ref{T:bK} and the inf-sup duality \eqref{inf-sup-duality} for $b_\alpha$ instead of $\mathbf{a}$, we obtain
\begin{equation}
  \label{alternative-equiv}
  \frac{1}{M_a}\norm{\Res(R\tilde{x})}_{\alpha,*}
  \leq
  \| x - R\tilde{x} \|
  \leq
  \frac{\kappa}{m_a} \norm{\Res(R\tilde{x})}_{\alpha,*}
\end{equation}
with
\begin{equation*}
  \norm{\Res(R\tilde{x})}_{\alpha,*}
  \defas
  \sup_{\vphi \in V \setminus \{0\}}
   \frac{ \langle{\Res(R\tilde x)}, \vphi\rangle }{\|\vphi\|_\alpha}.
\end{equation*}
Notice that, in contrast to  \eqref{equivalence-for-x-Rtildex},  this equivalence involves like \eqref{eq:apostr-Recon} the norm $\|\cdot\|$, which does not depend on $\alpha$. This fact leads to the following alternative with $\norm{\cdot}$ as error norm of Theorem~\ref{T:compact-case}.

\begin{thm}[Bounding the $\norm{\cdot}$-error -- compact and unconstrained case]
\label{T:apostNCC}
Let  \(x=(u,z)\in V\) be the solution to~\eqref{vred} and $\tilde{u} \in V_2$ be some approximate state and $\tilde{z} \in V_1$ some approximate rescaled adjoint state. Writing \(\tilde x = (\tilde{u}, \tilde{z}) \), their error in the $\alpha$-independent norm $\norm{\cdot}$ is bounded from above and below as follows:
  \begin{align*}
    \|x-\tilde x\|
    \le
    \frac1{m_a} \left(
    1 +  \kappa\,\frac{M_a}{M} \,\frac{|\tilde x-R \tilde x| }{ \|\Res(\tilde x)\|_*}
    \right)
    \|\Res(\tilde x)\|_*
  \end{align*}
  and
  \begin{align*}
    \frac1{M_a}\max\left\{
    \frac{M_a\sqrt{\alpha}}{M_a\sqrt{\alpha}+M}, \,
    1 - \kappa\,\frac{M_a}{m_a}\frac{M_a}{M} \,
    \frac{|\tilde x-R\tilde x|}{\|\Res(\tilde x)\|_*}
    \right\} \;
    \|\Res(\tilde x)\|_*
    \le
    \|x-\tilde x\|.
  \end{align*}
\end{thm}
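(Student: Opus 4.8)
The plan is to run the error splitting $x-\tilde x=(x-R\tilde x)+(R\tilde x-\tilde x)$ prepared in this subsection through the two equivalences \eqref{alternative-equiv} and \eqref{eq:apostr-Recon}, using a triangle inequality for the upper bound and a reverse triangle inequality for the lower bound. The decisive preliminary step is to bound the reconstruction residual, gauged in the $\alpha$-weighted dual norm, by the seminorm of $R\tilde x-\tilde x$ alone:
\[
  \norm{\Res(R\tilde x)}_{\alpha,*}\le \frac{M_a}{M}\,\snorm{\tilde x-R\tilde x}.
\]
Since $\Pi_K=\operatorname{id}$ here and $R\tilde x$ solves the two decoupled problems \eqref{eq:Rell}, the residual collapses to $\dual{\Res(R\tilde x)}{\vphi}=c_\alpha(\tilde x-R\tilde x,\vphi)$, cf.\ \eqref{Eq:APostReminder}. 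Then the continuity bound $\snorm{c_\alpha(v,\vphi)}\le\tfrac1{\sqrt\alpha}\snorm{v}\snorm{\vphi}$ from \eqref{cont-c-cc}, together with $\snorm{\vphi}\le \tfrac{M_a\sqrt\alpha}{M}\norm{\vphi}_\alpha$ read off from \eqref{norm_alpha}, yields the claim after dividing by $\norm{\vphi}_\alpha$ and taking the supremum. Measuring in $\norm{\cdot}_{\alpha,*}$ rather than in $\norm{\cdot}_*$ is exactly what lets the ratio $\snorm{\vphi}/\norm{\vphi}_\alpha$ absorb the $1/\sqrt\alpha$ into a clean prefactor.

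For the upper bound I would combine the triangle inequality with the upper estimate in \eqref{alternative-equiv}, the preliminary step, and the upper estimate in \eqref{eq:apostr-Recon}, obtaining
\[
  \norm{x-\tilde x}\le \frac{\kappa}{m_a}\,\frac{M_a}{M}\,\snorm{\tilde x-R\tilde x}+\frac1{m_a}\norm{\Res(\tilde x)}_*;
\]
factoring out $\norm{\Res(\tilde x)}_*$ reproduces the asserted upper bound. For the lower bound, the reverse triangle inequality $\norm{x-\tilde x}\ge\norm{R\tilde x-\tilde x}-\norm{x-R\tilde x}$ with the lower estimate in \eqref{eq:apostr-Recon} and the same preliminary step gives $\norm{x-\tilde x}\ge\tfrac1{M_a}\norm{\Res(\tilde x)}_*-\tfrac{\kappa}{m_a}\tfrac{M_a}{M}\snorm{\tilde x-R\tilde x}$; factoring out $\tfrac1{M_a}\norm{\Res(\tilde x)}_*$ yields the second entry of the maximum.

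Because that second entry can turn negative once $\snorm{\tilde x-R\tilde x}$ is large relative to $\norm{\Res(\tilde x)}_*$, I would separately establish the reconstruction-free first entry directly from the global continuity of $b_\alpha$. From the residual identity \eqref{eq:RES} and bilinearity, $\dual{\Res(\tilde x)}{\vphi}=b_\alpha(x-\tilde x,\vphi)$, so splitting $b_\alpha=\mathbf{a}+c_\alpha$ and estimating each part with \eqref{cont-inf-sup-vec-a} and \eqref{cont-c-cc} gives $\norm{\Res(\tilde x)}_*\le M_a\norm{x-\tilde x}+\tfrac{M}{\sqrt\alpha}\snorm{x-\tilde x}$. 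Comparing the seminorm with the norm through \eqref{enorm} and the boundedness of $I$ and $C$ turns the right-hand side into an $\alpha$-dependent but $\tilde x$-free multiple of $\norm{x-\tilde x}$; inverting this inequality produces the first entry of the maximum, and taking the larger of the two lower estimates concludes the proof.

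The main obstacle is the preliminary step with the sharp constant $M_a/M$: one has to recognize that $\Res(R\tilde x)$ must be measured in the $\alpha$-weighted dual norm so that $\snorm{\vphi}/\norm{\vphi}_\alpha$ neutralizes the $1/\sqrt\alpha$ growth, and that in the unconstrained, linear case the reconstruction residual reduces \emph{exactly} to $c_\alpha(\tilde x-R\tilde x,\cdot)$, carrying only the seminorm and hence the full benefit of the compactness of $I$ and $C$. The delicate bookkeeping is the seminorm–norm comparison entering the first entry; the remaining difficulty is purely organizational, namely arranging the two lower estimates into a maximum that stays valid across the regime where $\snorm{\tilde x-R\tilde x}$ dominates $\norm{\Res(\tilde x)}_*$.
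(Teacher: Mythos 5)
Your proposal is correct and takes essentially the same route as the paper's proof: the same splitting through $R\tilde x$, the same key estimate $\norm{\Res(R\tilde x)}_{\alpha,*}\le \tfrac{M_a}{M}\snorm{\tilde x-R\tilde x}$ derived exactly as in \eqref{x-Rtildex-compact-unconstrainted-case} (in the unconstrained case $\delta_\alpha(v,w)=\snorm{v-w}$, so \eqref{cont-c-cc} and $\snorm{\vphi}\le\tfrac{M_a\sqrt{\alpha}}{M}\norm{\vphi}_\alpha$ give the clean prefactor), and the same combination of \eqref{alternative-equiv}, \eqref{eq:apostr-Recon} and (reverse) triangle inequalities for the upper bound and the second entry of the max; for the first entry you merely unwind the lower bound of Theorem~\ref{T:Apost-1} instead of citing it, which is the identical argument. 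One fine point: carried out carefully, your seminorm--norm comparison is $\snorm{x-\tilde x}\le M\norm{x-\tilde x}$, so inversion yields the constant $M_a\sqrt{\alpha}/(M_a\sqrt{\alpha}+M^{2})$ rather than the stated $M_a\sqrt{\alpha}/(M_a\sqrt{\alpha}+M)$ --- but this discrepancy is shared by the paper itself, whose estimate $\norm{x-\tilde x}_\alpha\le\bigl(1+\tfrac{M}{M_a\sqrt{\alpha}}\bigr)\norm{x-\tilde x}$ implicitly uses $\snorm{v}\le\norm{v}$, valid only when $M\le 1$.
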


\begin{proof}
  To verify the upper bound, we start by  applying the triangle inequality
  \begin{align*}
    \|x-\tilde x\|
    \le
    \|x-R\tilde x\|
    +
    \|R\tilde x-\tilde x\|.
  \end{align*}
  For the second term on the right-hand side, we use \eqref{eq:apostr-Recon} as in the proof of Theorem~\ref{T:compact-case}. For the first term,  we employ \eqref{alternative-equiv} and \eqref{Eq:x-Rtildex} to obtain
  \begin{equation}
    \label{x-Rtildex-compact-unconstrainted-case}
    \begin{aligned}
      \frac{m_a}{\kappa}\, \|x-R\tilde x\|
      &\le
      \norm{ \Res(R\tilde x)}_{\alpha,*}
      =
      \sup_{\vphi \in V}
      \frac{c_\alpha(R\tilde{x}-\tilde{x},\vphi)}{\|\vphi\|_\alpha}
      \le
      \frac{M_a}{M}|\tilde x-R\tilde x|. 
    \end{aligned}
  \end{equation}
  This alternative to \eqref{Eq:APostReminder} concludes the proof of the upper bound.

  We turn to the lower bound. On the one hand, Theorem~\ref{T:Apost-1} from the direct approach and the definition \eqref{norm_alpha} of $\norm{\cdot}_\alpha$  imply
  \begin{align*}
    \frac{1}{M_a}\|\Res(\tilde x)\|_*\le \|x-\tilde x\|_\alpha \le
    \left(1+\frac{M}{M_a}\frac1{\sqrt\alpha}\right)\|x-\tilde x\|.
  \end{align*}
  This establishes the first option of the max.
  On the other hand, starting with \eqref{eq:apostr-Recon}, applying
  a triangle inequality and then \eqref{x-Rtildex-compact-unconstrainted-case}, we can
  deduce
  \begin{align*}
    \frac{1}{M_a}\|\Res(\tilde x)\|_*&\le \|R\tilde x-\tilde x\|\le
                                       \|x-\tilde x\|+\| x-R\tilde x\|
    \\
                                     &\le \|x-\tilde x\|+\frac{\kappa}{M}\frac{M_a}{m_a}|\tilde x-R\tilde x|.
  \end{align*}
  This shows the second option of the max and the proof is finished.
\end{proof}

\begin{rem}[Gap in bounding the $\norm{\cdot}$-error -- compact and unconstrained case]
\label{R:abstractCompact2}
%
If we do not exploit the compactness by inserting the  bound $\snorm{\tilde{x}-R\tilde{x}} \leq M \norm{\tilde{x}-R\tilde{x}}$ in the bounds of Theorem~\ref{T:apostNCC},  only the first option in the $\max$ of the lower bound applies and the gap is  $O(\kappa/\sqrt{\alpha}) = O(1/\alpha)$ as $\alpha \to 0$, the same order as in the corresponding case of Remark~\ref{R:abstractCompact1}. Recall however that Theorem~\ref{T:apostNCC} considers an error notion that is independent of $\alpha$. 

Similarly to Remark~\ref{R:abstractCompact1}, if the compactness assumptions lead to
\begin{align*}
    \frac{\snorm{\tilde{x} - R\tilde x}}
    {\|\tilde x-R\tilde x\|}
    \to
    0
    \qquad\text{for }\tilde  x\to x \text{ and fixed }\alpha>0,
\end{align*}
the gap between upper and lower bound converges to \(M_a/m_a\), viz.\  the gap in the error-residual relation \eqref{eq:err=ResA} for the state equation. Finally, if the Tikhonov regularization $\alpha$ varies and suppose that we can construct $\tilde{x}$ such that
  \begin{align}
    \label{Eq:Quotient=alphaNCC}
    \frac{\snorm{\tilde x  - R\tilde x}}{\|\Res(\tilde x)\|_*}
    \lesssim
    \sqrt{\alpha},
  \end{align}
  then the gap is \(O(1)\), uniformly in $\alpha$. This uses the
  second option in the $\max$ of the lower bound in
  Theorem~\ref{T:apostNCC} and condition \eqref{Eq:Quotient=alphaNCC}
  can be 
  less restrictive than its counterpart
  \eqref{Eq:Quotient=alpha} in Remark~\ref{R:abstractCompact1}. 
\end{rem}

\section{Applications to optimal control problems}
\label{sec:applications}
%
%
This section proceeds with the a~posteriori analysis for the abstract
optimization problem~\eqref{min}.  Recall that the remaining tasks
are to split the error bounds of
Section~\ref{sec:abstract-apost:res=err} into local contributions and,
then, to split the latter into computable and oscillatory parts. To
this end, one can apply
\begin{itemize}
\item classical techniques, see, e.g., \cite{AinsworthOden:2000,Verfuerth:2013}, 
leading to an equivalence of error and estimator up to so-called oscillation, or
\item more recent techniques, see
\cite[Sections~3 and 4]{KreuzerVeeser:2021}, \cite[Section 4]{BonCanNocVee:2024}, leading to a strict
equivalence.
\end{itemize}
In both cases, various estimator types can be chosen for the
computable parts. In any case, these tasks are specific to the
functional setting of the state equation, the control and observation
operators and the discretization. 
We therefore exemplify these tasks by considering finite element
discretizations of optimal control problems with distributed and
boundary control. Doing so, we focus on the possible interplay between
\(\|\Res(\tilde x)\|_*\) and \(\delta^*_\alpha(R\tilde x,\tilde
x)\),  \(\delta_\alpha(R\tilde x,\tilde x)\) or \(|R\tilde
x-\tilde x|\). 
Moreover, we numerically study the behavior of the derived
a~posteriori bounds. The adaptive simulations where carried
out in the DUNE framework \cite{Bastian_etal-DUNE:2021}.

In what follows, we shall use the following notation. For a Lebesgue measurable set
\(\omega\subset \R^d\),  we denote by \(L^2(\omega)\) the 
space of 
square integrable functions on \(\omega\) and define its norm by 
\(\|\cdot\|_{L^2(\omega)}^2:=\int_\omega |\cdot|^2\). The
space of functions having also weak first derivatives in
\(L^2(\omega)\) is denoted by \(H^1(\omega)\) with norm defined by
\(\|\cdot\|_{H^1(\omega)}^2\defas\|\nabla
\cdot\|_{L^2(\omega)}^2+\|\cdot\|_{L^2(\omega)}^2\)
and \(\mathring
H^1(\omega)\) is its closed subspace of functions with zero trace.
Recalling Poincar\'e's inequality \(\|v\|_{L^2(\omega)}\le C_P
\|\nabla v\|_{L^2(\omega)}\), \(v\in \mathring{H}^1(\omega)\), an alternative norm on
\(\mathring{H}^1(\omega)\) is given by \(\|\nabla\cdot\|_{L^2(\omega)}\).
The dual space of
\(\mathring H^1(\omega)\) is denoted by \(H^{-1}(\omega)\) and equipped with the
operator norm \(\|\cdot\|_{H^{-1}(\omega)}\). 
By virtue of the Riesz map, \(L^2(\omega)\) is identified with its
dual space.

\subsection{Distributed control}
\label{sec:Example}
In this section, we show how the a posteriori bounds from Section~\ref{sec:abstract-apost:res=err} are applied to optimal control problems with possibly constrained distributed control. 

Let \(\Omega\subset \R^d\), \(d>1\), be a domain with polyhedral
boundary \(\Gamma\). For a subdomain \(\Omega_{\mathsf{Q}}\subseteq \Omega\) and
some bounds \(a\in \R\cup\{-\infty\}\) and \(b\in \R\cup\{\infty\}\)
with \(a<b\), let
\begin{align}
\label{Eq:model-oc-dc;K}
  K
  =
  \{q\in L^2(\Omega_{\mathsf{Q}}) \mid
    a\le q\le b~\text{a.e.\ in}~\Omega_{\mathsf{Q}}\} 
\end{align}
be the set of admissible controls. For a target function \(u_d\in
L^2(\Omega_{\mathsf{W}})\) supported in the subdomain
\(\Omega_{\mathsf{W}}\subseteq 
\Omega\) and cost parameter \(\alpha>0\), we
consider
\begin{gather}\label{Eq:model-oc}
  \begin{gathered}
    \min_{ (q,u)\in K \times \mathring H^1(\Omega)}
    \frac12\|u-u_d\|^2_{L^2(\Omega_{\mathsf{W}})}
    +
    \frac{\alpha}2\|q\|_{L^2(\Omega_{\mathsf{Q}})}^2
    \\
    \text{subject to}\quad -\Delta u= f + q\chi_{\Omega_{\mathsf{Q}}}~\text{in}~\Omega
    \quad\text{and}\quad u=0~\text{on}~\Gamma.
  \end{gathered}
\end{gather}
Here, \(\chi_{\Omega_{\mathsf{Q}}}\) denotes the indicator function on
\(\Omega_{\mathsf{Q}}\) and \(q\chi_{\Omega_{\mathsf{Q}}}\) is
considered the zero extension of \(q\in L^2(\Omega_{\mathsf{Q}})\) to
\(\Omega\). This problem fits into the framework of
Section~\ref{sec:2} with the Hilbert spaces
\begin{alignat*}{2}
  V_1&=V_2= \mathring{H}^1(\Omega),
  &\quad(v_1,v_2)_{V_i}
  &=
  (\nabla v_1,\nabla v_2)_{L^2(\Omega)},
  \;\; i=1,2,
  \\
  Q&=L^2(\Omega_{\mathsf{Q}}),
  &(q_1,q_2)_Q &= (q_1,q_2)_{L^2(\Omega_{\mathsf{Q}})},
  \\
  W&=L^2(\Omega_{\mathsf{W}}), \quad
  &(w_1,w_2)_W &= (w_1,w_2)_{L^2(\Omega_{\mathsf{W}})}. 
\end{alignat*}
The remaining ingredients are given by~\eqref{Eq:model-oc-dc;K},
\begin{gather*}
  a(v,\vphi)
  =
  \int_\Omega \nabla v\cdot\nabla\vphi 
  =
  (\nabla v,\nabla \vphi)_{L^2(\Omega)},
  \qquad
  m_a = 1 = M_a,
  \\
  \langle Cq,\vphi \rangle
  =
  \int_{\Omega_{\mathsf{Q}}} q \vphi
  =
  (q,\vphi_{|\Omega_{\mathsf{Q}}} )_{L^2(\Omega_{\mathsf{Q}})}
  =
  ( q, C^* \vphi)_{L^2(\Omega_{\mathsf{Q}})},\qquad M=C_P,
  \\
  Iv = v_{|\Omega_{\mathsf{W}}},
  \qquad
  \langle I^*w, v \rangle
  =
  \int_{\Omega_{\mathsf{W}}} v w
  =
  (v_{|\Omega_{\mathsf{W}}},w)_{L^2(\Omega_\mathsf{W})},
  \\
  \Pi_K q=\min\left\{\max\{q,\,
    a\},\,b\right\}\quad\text{a.e. in}~\Omega_Q, 
\end{gather*}
where \(\langle \cdot,\cdot\rangle\) denotes the duality pairing
in \( H^{-1}(\Omega) \times \mathring{H}^1(\Omega)\).

The variational formulation of the reduced and rescaled optimality system~\eqref{mod-vred-rows} reads:
find \((u,z)\in \mathring{H}^1(\Omega)\times \mathring{H}^1(\Omega)\) such that
\begin{subequations}
  \label{mod-vred-distributed_control}
  \begin{align}
    &\forall \vphi_1\in \mathring{H}^1(\Omega)
    & \int_\Omega\nabla\vphi_1\cdot\nabla z - \tfrac{1}{\sqrt{\alpha}} \int_{\Omega_{\mathsf{W}}} u\vphi_1
    &=
      -\tfrac{1}{\sqrt{\alpha}} \int_{\Omega_{\mathsf{W}}} \ud\vphi_1,
    \\
    &\forall \vphi_2\in \mathring{H}^1(\Omega)
    &\int_\Omega\nabla u\cdot\nabla\vphi_2 - \int_{\Omega_{\mathsf{Q}}}\Pi_K(- \tfrac{1}{\sqrt{\alpha}} z)\vphi_2
    &= \dual{ f}{\vphi_2}
      .
  \end{align}
\end{subequations}

For its discretization, 
we use Lagrange finite elements. To this
end, let \(\mesh\) be a simplicial face-to-face (conforming) mesh 
of the domain \(\Omega\). Denoting by \(\vertices\) the vertices of
\(\mesh\), we define the star around a vertex \(z\in \vertices\) by
\begin{align*}
  \omega_z\defas\bigcup\{K\in\mesh\mid z\in K\}\quad\text{with
  diameter}\quad h_z=\operatorname{diam}(\omega_z).
\end{align*}
The discrete spaces associated with Lagrange finite elements of degree \(\ell>0\) are then given by
\begin{align*}
  S_\ell^1(\mesh)&\defas \left\{ v\in H^1(\Omega)\mid v_{|K}\in \mathbb{P}_\ell(K),
                   \forall K\in \mesh\right\}
                   \intertext{and}
                   \mathring{S}_\ell^1(\mesh)&\defas {S}_\ell^1(\mesh)\cap \mathring{H}^1(\Omega),
\end{align*}
where \(\mathbb{P}_\ell(K)\) denotes the set of polynomials up to degree
\(\ell\) on \(K\). The variational discretization
of~\eqref{mod-vred-distributed_control} then reads as follows:
find \((U,Z)\in
\mathring{S}_\ell^1(\mesh)\times \mathring{S}_\ell^1(\mesh)\) such that
\begin{subequations}
  \label{mod-vred-dc-discretisation}
  \begin{align}
    &\forall \Phi_1\in \mathring{S}_\ell^1(\mesh) 
    & \int_\Omega\nabla\Phi_1\cdot\nabla Z - \tfrac{1}{\sqrt{\alpha}} \int_{\Omega_{\mathsf{W}}} U\Phi_1
    &=
      -\tfrac{1}{\sqrt{\alpha}} \int_{\Omega_{\mathsf{W}}} \ud\Phi_1,
    \\
    &\forall \Phi_2\in \mathring{S}_\ell^1(\mesh) 
    &\int_\Omega \nabla U\cdot\nabla\Phi_2 - \int_{\Omega_{\mathsf{Q}}} \Pi_K(- \tfrac{1}{\sqrt{\alpha}} Z)\Phi_2
    &= \dual{f}{\Phi_2}
      .
  \end{align}
\end{subequations}
Consequently, in the solution \(X=(U,Z)\) of~\eqref{mod-vred-dc-discretisation}, the residual 
\begin{align}
\label{Eq:ResExample}
  \begin{aligned}
    \big\langle \Res(X),\, (\vphi_1,\vphi_2) \big\rangle
    &\defas 
    \dual{f}{\vphi_2}_{\mathring{H}^1(\Omega)} -\int_\Omega\nabla U\cdot\nabla\vphi_2 +
    \int_{\Omega_{\mathsf{Q}}} \Pi_K(- \tfrac{1}{\sqrt{\alpha}} Z)\vphi_2
    \\
    &\quad
    - \tfrac1{\sqrt{\alpha}} \int_{\Omega_{\mathsf{W}}} u_d\vphi_1
    -
    \int_\Omega\nabla\vphi_1\cdot\nabla Z
    +
    \tfrac{1}{\sqrt{\alpha}} \int_{\Omega_{\mathsf{W}}} U\vphi_1 ,
  \end{aligned}
\end{align}
for \((\vphi_1,\vphi_2)\in \mathring{H}^1(\Omega)\times \mathring{H}^1(\Omega)\),
satisfies the 
orthogonality condition
\begin{align}
\label{Eq:Galerkin-orthogonality}
  \langle\Res(X),\,\Phi\rangle=0\qquad\text{for
  all}~\Phi\in \mathring{S}_\ell^1(\mesh) \times \mathring{S}_\ell^1(\mesh) .
\end{align}

Using standard arguments, see, e.g.,\cite[Lemma 4]{KreuzerVeeser:2021}, we can split the residual norm into local contributions such that 
\begin{equation}
\label{Eq:Res-localisation}
\begin{aligned}
  \frac{1}{d+1}\sum_{z\in\vertices}\|\Res(X)\|^2_{H^{-1}(\omega_z)}
  &\le
  \|\Res(X)\|^2_{H^{-1}(\Omega)}
\\
  &\le
  C_{\mesh}\sum_{z\in\vertices}\|\Res(X)\|^2_{H^{-1}(\omega_z)},
\end{aligned}
\end{equation}
where the constant \(C_\mesh\) only depends on the shape regularity of
the mesh \(\mesh\). Notice that each contribution $\|\Res(X)\|^2_{H^{-1}(\omega_z)}$, $z \in \vertices$, is a  local quantity once the finite element solution $X=(U,Z)$ from \eqref{mod-vred-dc-discretisation} is available by  means of a global solve.
Combining this `localization' with  Theorem~\ref{T:Apost-1} of the direct approach readily provides the following a~posteriori bounds.

\begin{thm}[Bounding $d_\alpha$-error for distributed control -- general case]
\label{T:Apost1-ExampleCC}
Let \(x=(u,z)\) be the exact states of the optimal control problem
\eqref{Eq:model-oc}, where the adjoint state is rescaled, cf.~\eqref{mod-vred-distributed_control}. Furthermore, let \(X=(U,Z)\) be
their finite element approximations
from~\eqref{mod-vred-dc-discretisation}. Then, we have for the
residual  
 defined in~\eqref{Eq:ResExample} the equivalence 
  \begin{align*}
    \frac{1}{(d+1)}\sum_{z\in\vertices}\left\|\Res(\tilde x)\right\|_{H^{-1}(\omega_z)}^2
    \le
    d_\alpha(x, X)^2
    \le
    \kappa\, C_\mesh \sum_{z\in\vertices}\|\Res(X)\|^2_{H^{-1}(\omega_z)},
  \end{align*}
  where the constant \(C_\mesh\) depends on the shape regularity of
  the mesh and \(\kappa\) is defined in~\eqref{kappa}.
\end{thm}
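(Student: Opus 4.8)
The plan is to obtain this estimate as a direct specialization of Theorem~\ref{T:Apost-1} of the direct approach to the present distributed-control setting, combined with the star-localization~\eqref{Eq:Res-localisation} of the residual norm. The abstract result already performs the analytic work; what remains is bookkeeping of the concrete constants and an identification of the relevant dual norm.

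First I would record the concrete data of the problem. In the $\mathring{H}^1(\Omega)$-setting with the gradient inner product one has $m_a = M_a = 1$, so the bounds of Theorem~\ref{T:Apost-1} collapse, with $\tilde x = X$ the finite element pair from~\eqref{mod-vred-dc-discretisation}, to
\begin{equation*}
  \norm{\Res(X)}_* \le d_\alpha(x,X) \le \kappa\,\norm{\Res(X)}_*,
\end{equation*}
with $\kappa$ from~\eqref{kappa}. Here it is essential to note that the abstract dual norm $\norm{\cdot}_*$ on $V^* = V_1^* \times V_2^*$ with $V_1 = V_2 = \mathring{H}^1(\Omega)$ is exactly the dual norm appearing in~\eqref{Eq:Res-localisation}: since $V_i^* = H^{-1}(\Omega)$ and $\norm{(v_1,v_2)}^2 = \norm{\nabla v_1}_{L^2(\Omega)}^2 + \norm{\nabla v_2}_{L^2(\Omega)}^2$, the functional~\eqref{Eq:ResExample} is precisely the concrete realization of $\Res(\tilde x)$, and its $\norm{\cdot}_*$-norm is the quantity $\norm{\Res(X)}_{H^{-1}(\Omega)}$ that is localized.

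Then I would square the two displayed inequalities and insert the localization~\eqref{Eq:Res-localisation}. For the lower bound, $d_\alpha(x,X)^2 \ge \norm{\Res(X)}_*^2 = \norm{\Res(X)}_{H^{-1}(\Omega)}^2$, and the left inequality in~\eqref{Eq:Res-localisation} produces the factor $1/(d+1)$ in front of $\sum_{z\in\vertices}\norm{\Res(X)}_{H^{-1}(\omega_z)}^2$. For the upper bound, squaring gives $d_\alpha(x,X)^2 \le \kappa^2\,\norm{\Res(X)}_{H^{-1}(\Omega)}^2$, and the right inequality in~\eqref{Eq:Res-localisation} supplies the constant $C_\mesh$ and the sum over stars. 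Chaining these yields the claimed two-sided estimate.

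The derivation is essentially routine once Theorem~\ref{T:Apost-1} and the localization are in place, so I expect no genuine obstacle at this stage; the substantive work sits upstream in Theorem~\ref{T:bK} (which yields $\kappa$) and in the cited star-localization. The one point demanding care is the norm identification of the previous paragraph, namely verifying that the product dual norm $\norm{\cdot}_*$ coincides with the $H^{-1}(\Omega)$-based quantity localized in~\eqref{Eq:Res-localisation}. A minor related issue is the power of the stability constant: squaring the upper bound of Theorem~\ref{T:Apost-1} naturally produces $\kappa^2$ as the prefactor of $C_\mesh$, so I would expect the upper constant to read $\kappa^2 C_\mesh$ unless some additional reduction is intended.
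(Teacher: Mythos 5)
Your proposal matches the paper's own derivation exactly: the paper gives no separate proof, stating only that the result follows by "combining this localization with Theorem~\ref{T:Apost-1}", which is precisely your argument, including the identification $m_a=M_a=1$ for the gradient inner product and the identification of the abstract dual norm $\norm{\cdot}_*$ with the $H^{-1}(\Omega)\times H^{-1}(\Omega)$ quantity localized in \eqref{Eq:Res-localisation}. Your closing observation is also well taken: squaring the upper bound of Theorem~\ref{T:Apost-1} yields $\kappa^2 C_\mesh$, and since $\kappa \sim 1/\sqrt{\alpha}$ cannot be absorbed into a mesh-dependent constant, the factor $\kappa$ in the paper's stated upper bound appears to be a typo for $\kappa^2$ --- this is also what is consistent with the $O(1/\sqrt{\alpha})$ gap for the unsquared error claimed in \eqref{equivalence-for-sim-min}.
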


Thanks to the  compact embedding \(\mathring{H}^1(\Omega)\subset L^2(\Omega)\),
the operators \(C^*\) and \(I\) are compact.  This allows applying the results of the compact approach in Section~\ref{sec:apost-ellRecon}. To this end, we use the Lipschitz continuity of \(\Pi_K\) with Lipschitz constant $1$ to deduce that
\begin{align}\label{Eq:delta<L2norm}
  \begin{aligned}
    \delta_\alpha^*(R X,X)^2\le\delta_\alpha(R X,X)^2
    &\le \norm{R_2X-Z }_{L^2(\Omega)}^2 +
    \norm{R_1X-U}_{L^2(\Omega)}^2,
  \end{aligned}
\end{align}
where \(R=(R_1,R_2): \mathring{S}_\ell^1(\mesh) \times \mathring{S}_\ell^1(\mesh) \to
\mathring{H}^1(\Omega)\times \mathring{H}^1(\Omega)\) is the auxiliary operator defined in~\eqref{eq:Rell}.
Combining the definitions of the auxiliary operator $R$ and the finite element solution $X$, we find the orthogonality relationships
\begin{align}\label{Eq:Ritz}
  \int_{\Omega}\nabla (R_1X-U) \cdot \nabla\Phi_2
  =
  0
  =
  \int_{\Omega}\nabla (R_2X-U)\cdot\nabla \Phi_1\quad\forall \Phi_1,\Phi_2\in \mathring{S}_\ell^1(\mesh)
\end{align}
In other words, $U$ and $Z$ are, respectively, the Ritz projections in $\mathring{S}_\ell^1(\mesh)$
of $R_1 X$ and $R_2 X$ with respect to the bilinear form \((\nabla\cdot,\nabla\cdot)_{L^2(\Omega)}\). Taking into account also the orthogonality \eqref{Eq:Galerkin-orthogonality} of $\Res(X)$, we can thus use a well-known argument, see, e.g., \cite[Section~2.4]{AinsworthOden:2000}, based upon duality and regularity, to bound the \(L^2\)-errors in~\eqref{Eq:delta<L2norm} in terms of the residual $\Res(X)$.  For simplicity, we shall assume that the domain \(\Omega\) is convex and resort to the following well-known \(H^2\)-regularity result for the Poisson problem; compare with \cite[(3,1,2,2) and Lemma 3.2.1.2]{Grisvard:2011}.

\begin{prop}[Extra regularity for distributed control]
\label{P:regularity-PoissonL2}
Let \(\Omega\subset \R^d\) be a convex domain. For any source \( g \in L^2(\Omega)\), the unique solution \( v_g\in \mathring{H}^1(\Omega)\) of the Poisson problem
  \begin{align*}
    \forall v\in \mathring{H}^1(\Omega)
   \qquad
   \int_\Omega \nabla  v_g\cdot\nabla v
   =
   \int_\Omega gv
  \end{align*}
  satisfies
  \begin{align*}
    v_g\in H^2(\Omega)\qquad\text{and}\qquad |v_g|_{H^2(\Omega)}
    \le 
    \|g\|_{L^2(\Omega)},
  \end{align*}
  where 
  \(|\cdot|_{H^2(\Omega)}\) denotes the \(H^2(\Omega)\)-seminorm.
\end{prop}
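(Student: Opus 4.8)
The plan is to split the argument into well-posedness and regularity, with essentially all the work lying in the latter. For well-posedness, I would note that the bilinear form $(v,w)\mapsto\int_\Omega\nabla v\cdot\nabla w$ is continuous and, by Poincar\'e's inequality, coercive on $\mathring{H}^1(\Omega)$, while $g\in L^2(\Omega)$ induces the bounded functional $v\mapsto\int_\Omega g\,v$. The Lax--Milgram lemma then yields a unique $v_g\in\mathring{H}^1(\Omega)$; this part is routine and I would merely state it. In particular $-\Delta v_g=g$ holds distributionally, so the seminorm bound to be shown is equivalent to $\int_\Omega|D^2 v_g|^2\le\int_\Omega(\Delta v_g)^2$.

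The core is this last inequality, and it is the only place where convexity enters. First I would establish it for a function $v\in C^\infty(\overline\Omega)\cap\mathring{H}^1(\Omega)$ on a \emph{smooth} convex domain. Integrating by parts twice and exploiting that $v$, and hence all its tangential derivatives, vanish on $\partial\Omega$, one obtains the Rellich--Ne\v{c}as type identity
\[
  \int_\Omega |D^2 v|^2
  =
  \int_\Omega (\Delta v)^2
  -
  \int_{\partial\Omega} \mathcal{H}\,\Big(\frac{\partial v}{\partial n}\Big)^2\,dS,
\]
where $\mathcal{H}$ is the mean curvature of $\partial\Omega$ with the sign convention that makes $\mathcal{H}\ge 0$ precisely for convex domains (more intrinsically, the boundary integrand is the second fundamental form contracted with the normal derivative). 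Convexity thus forces the boundary term to be nonnegative, and discarding it gives $\int_\Omega|D^2 v|^2\le\int_\Omega(\Delta v)^2$ with no slack, which is exactly the sharp constant $1$ in the statement.

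It remains to pass from smooth convex domains and smooth functions to a general convex $\Omega$ and the $\mathring{H}^1$-solution $v_g$, and this is where I expect the main obstacle. A convex domain need not have $C^2$ boundary, so the curvature identity cannot be applied verbatim. I would exhaust $\Omega$ from inside by smooth convex domains $\Omega_k\uparrow\Omega$, solve $-\Delta v_k=g$ on each $\Omega_k$, and apply the smooth-case bound to obtain $|v_k|_{H^2(\Omega_k)}\le\|g\|_{L^2(\Omega_k)}\le\|g\|_{L^2(\Omega)}$ uniformly in $k$. On any subdomain compactly contained in $\Omega$ this is a uniform $H^2$-bound, so weak compactness and a diagonal argument produce a limit $v_*\in H^2_{\mathrm{loc}}(\Omega)$ with $|v_*|_{H^2(\Omega)}\le\|g\|_{L^2(\Omega)}$, which I would identify with $v_g$ through the uniqueness of the first step and weak lower semicontinuity of the seminorm. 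The delicate points are the construction of the smooth convex exhaustion, the convergence $v_k\to v_g$, and the verification of the sign of the curvature term; these are precisely the ingredients that make convexity indispensable and that Grisvard's Lemma~3.2.1.2 packages.
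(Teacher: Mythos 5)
The paper itself offers no proof of this proposition: it is quoted as a known regularity theorem with a pointer to Grisvard [(3,1,2,2) and Lemma~3.2.1.2]. Your proposal reconstructs precisely the argument behind that citation---Lax--Milgram for well-posedness, the Rellich--Ne\v{c}as identity whose boundary term carries the sign of the second fundamental form on a smooth convex domain, and an interior exhaustion by smooth convex domains combined with weak compactness and lower semicontinuity to reach a general convex $\Omega$---so in substance you are supplying the proof that the authors delegate to the literature, and the outline is correct. One structural point deserves to be made explicit: when you ``apply the smooth-case bound'' to $v_k$ on $\Omega_k$, you silently presuppose $v_k \in H^2(\Omega_k)$, which is not automatic from Lax--Milgram; the identity is derived for functions smooth up to the boundary and extends to $H^2(\Omega_k)\cap \mathring{H}^1(\Omega_k)$ by density, but the membership $v_k \in H^2(\Omega_k)$ must first be obtained from classical elliptic regularity on smooth domains (local flattening and difference quotients). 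That classical estimate comes with a constant depending on $\Omega_k$, which is harmless here because only the sharp constant $1$ from the identity, not the classical constant, survives into the bound $|v_k|_{H^2(\Omega_k)} \le \|g\|_{L^2(\Omega_k)}$ that you pass to the limit. With that step inserted, and with the limit identified as you describe (zero-extension keeps $v_k$ in $\mathring{H}^1(\Omega)$, which is weakly closed, and testing against compactly supported functions pins down $v_g$), your plan is a complete and faithful rendering of the cited result.
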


Using Proposition~\ref{P:regularity-PoissonL2} in the cited duality
argument then leads to the following a posteriori upper bound.
\begin{lem}[Upper bound for compact error -- distributed control]
\label{L:apost-deltaK}
Let \(\Omega\subset \R^d\) be a convex polyhedral domain. The $L^2$-errors in \eqref{Eq:delta<L2norm} are bounded in terms of the residual of $X=(U,Z)$:
  \begin{align*}
    \|R_2X-Z\|_{L^2(\Omega)}^2+\|R_1X-U\|_{L^2(\Omega)}^2
    \le 
    C_\mesh^2\sum_{z\in\vertices}h_z^2 \|\Res(X)\|^2_{H^{-1}(\omega_z)},
  \end{align*}
  where \(C_\mesh\) is the constant
  from~\eqref{Eq:Res-localisation}.
\end{lem}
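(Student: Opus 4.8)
The plan is to bound the two $L^2$-errors $e_1 \defas R_1X-U$ and $e_2 \defas R_2X-Z$ separately by an Aubin--Nitsche duality argument and then add the two resulting bounds. Because $A=A^\ast=-\Delta$ here, the two errors play symmetric roles, so I would carry out the argument in detail only for $e_1$ and transcribe it for $e_2$. First I would isolate the two structural facts that drive everything. Comparing the defining relation \eqref{eq:Rell1} of $R_1$ with the residual \eqref{Eq:ResExample} shows that $e_1$ solves the Poisson problem with the second residual component as source,
\[
  \int_\Omega \nabla e_1\cdot\nabla\vphi_2 = \langle \Res(X), (0,\vphi_2)\rangle
  \qquad \forall\, \vphi_2\in\mathring{H}^1(\Omega),
\]
while the Ritz orthogonalities \eqref{Eq:Ritz} (equivalently, \eqref{Eq:Galerkin-orthogonality} tested with $(0,\Phi_2)$) give $\int_\Omega \nabla e_1\cdot\nabla\Phi_2 = 0$ for every discrete $\Phi_2\in\mathring{S}^1_\ell(\mesh)$.

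Next I would run the duality step. Let $w\in\mathring{H}^1(\Omega)$ solve the Poisson problem with source $e_1$; by Proposition~\ref{P:regularity-PoissonL2} it satisfies $|w|_{H^2(\Omega)}\le\|e_1\|_{L^2(\Omega)}$. Testing this dual problem with $e_1$ and using the first structural fact yields $\|e_1\|_{L^2(\Omega)}^2=\int_\Omega\nabla e_1\cdot\nabla w$, and the orthogonality lets me subtract any discrete $\Phi_2$, so that $\|e_1\|_{L^2(\Omega)}^2 = \langle\Res(X),(0,w-\Phi_2)\rangle$. Choosing $\Phi_2$ to be the quasi-interpolant of $w$ underlying the localization \eqref{Eq:Res-localisation} and splitting the pairing with the associated partition of unity $\{\psi_z\}_{z\in\vertices}$ subordinate to the stars $\omega_z$, each $(w-\Phi_2)\psi_z$ lies in $\mathring{H}^1(\omega_z)$, and I obtain
\[
  \|e_1\|_{L^2(\Omega)}^2 = \sum_{z\in\vertices}\langle \Res(X), (0,(w-\Phi_2)\psi_z)\rangle
  \le \sum_{z\in\vertices}\|\Res(X)\|_{H^{-1}(\omega_z)}\,\|\nabla((w-\Phi_2)\psi_z)\|_{L^2(\omega_z)}.
\]
The product rule ($\|\nabla\psi_z\|_\infty\lesssim h_z^{-1}$) together with the interpolation estimates for $w\in H^2$ bounds the last factor by $\lesssim h_z|w|_{H^2(\tilde\omega_z)}$ on slightly enlarged patches $\tilde\omega_z$. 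A Cauchy--Schwarz inequality in the sum, finite overlap of the patches, and finally $|w|_{H^2(\Omega)}\le\|e_1\|_{L^2(\Omega)}$ then cancel one power of $\|e_1\|_{L^2(\Omega)}$ and leave
\[
  \|e_1\|_{L^2(\Omega)}^2 \le C_\mesh^2\sum_{z\in\vertices}h_z^2\,\|\Res(X)\|^2_{H^{-1}(\omega_z)}.
\]

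The same reasoning applied to $e_2$, now with the first residual component $\langle\Res(X),(\cdot,0)\rangle$ and the second orthogonality in \eqref{Eq:Ritz}, yields the analogous bound. Adding the two completes the proof, because the local pairings of the two residual components recombine into $\|\Res(X)\|^2_{H^{-1}(\omega_z)}=\|\Res_1\|^2_{H^{-1}(\omega_z)}+\|\Res_2\|^2_{H^{-1}(\omega_z)}$, reflecting the product structure of $V^\ast$.

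The duality identity and the interpolation/overlap estimates are routine. The hard part will be the bookkeeping of the constant so that it emerges as exactly the $C_\mesh$ of \eqref{Eq:Res-localisation}: this forces me to use the very same quasi-interpolation operator and star partition on which \eqref{Eq:Res-localisation} is built, so that the shape-regularity and finite-overlap constants generated by the duality argument coincide with the localization constant rather than producing a new, larger one. A secondary technical point is to make sure that $(w-\Phi_2)\psi_z$ genuinely vanishes on $\partial\omega_z$, so that it is an admissible test function for the local dual norm $\|\Res(X)\|_{H^{-1}(\omega_z)}$.
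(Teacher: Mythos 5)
Your proposal is correct and follows essentially the same route as the paper's proof: an Aubin--Nitsche duality argument using the $H^2$-regularity of Proposition~\ref{P:regularity-PoissonL2}, subtraction of the Scott--Zhang interpolant via the orthogonality \eqref{Eq:Galerkin-orthogonality}, localization with the hat-function partition of unity on the stars $\omega_z$, and a final Cauchy--Schwarz/finite-overlap step that cancels one power of the $L^2$-error. Your worry about reproducing \emph{exactly} the constant of \eqref{Eq:Res-localisation} is unnecessary, since the paper lets $C_\mesh$ denote a generic constant depending only on shape regularity, varying from occurrence to occurrence.
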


\begin{proof}
We sketch the proof only for  the second term \( \|R_1X-U\|_{L^2(\Omega)}^2\); the same argument applies also to the first term. According to Proposition~\ref{P:regularity-PoissonL2}, there is  \(\psi\in H^2(\Omega)\cap \mathring{H}^1(\Omega)\) with
\begin{align}\label{Eq:regRes2}
    -\Delta \psi=R_1X-U\quad\text{in}~\Omega\qquad \text{and}\qquad
    |\psi|_{H^2(\Omega)}
    \le 
    \|R_1X-U\|_{L^2(\Omega)}.
\end{align}
We denote by
\(\mathcal{I}_{\textsf{sz}}:\mathring{H}^1(\Omega)\to
  \mathring{S}_\ell^1(\mesh)\)
the Scott-Zhang quasi-interpolation operator~\cite{ScottZhang:1990}. Thanks to the definition of $R$ and the orthogonality~\eqref{Eq:Galerkin-orthogonality} of the residual, we deduce
\begin{align*}
    \|R_1X-U\|_{L^2(\Omega)}^2&=\int_\Omega \nabla (R_1X-U)\cdot\nabla
                                \psi
                                =
                                \langle \Res_2(X),\psi\rangle
    \\
                              &=\langle
                                \Res_2(X),\psi-\mathcal{I}_{\textsf{sz}}
                                \psi\rangle
                                =\sum_{z\in\vertices}\langle
                                \Res_2(X),(\psi-\mathcal{I}_{\textsf{sz}}
                                \psi)\phi_z\rangle
    \\
                              &\le
                                \sum_{z\in\vertices}\|
                                \Res_2(X)\|_{H^{-1}(\omega_z)}\|\nabla((\psi-\mathcal{I}_{\textsf{sz}}
                                \psi)\phi_z)\|_{L^2(\omega_z)},
\end{align*}
where we have used for the last equality that the Lagrange basis functions \(\phi_z\), \(z\in\vertices\), of \(S_1^1(\mesh)\) form a partition of unity and that \(\operatorname{supp}{\phi_z}=\omega_z\), \(z\in\vertices\). 
In view of \(\|\phi_z\|_{L^\infty(\omega_z)}=1\)  and \(\|\phi_z\|_{L^\infty(\omega_z)}\le C_\mesh h_z^{-1}\), standard interpolation estimates imply
\begin{equation}
\label{Eq:countering-cutoff-H2}
   \begin{aligned}
      \|\nabla((\psi &- \mathcal{I}_{\textsf{sz}} \psi)\phi_z)\|_{L^2(\omega_z)}
      \\
      &\le
      \|\nabla(\psi-\mathcal{I}_{\textsf{sz}} \psi)\|_{L^2(\omega_z)}
      +
      \|\nabla \phi_z\|_{L^\infty(\omega_z)} \|\psi-\mathcal{I}_{\textsf{sz}} \psi\|_{L^2(\omega_z)}
      \\
      &\le C_\mesh h_z |\psi|_{H^2(\tilde \omega_z)}.
    \end{aligned}
  \end{equation}
  Note that the constant \(C_\mesh\) may vary from occurence to
  occurence but each time
  only depends on the shape regularity of \(\mesh\). Here the domains
  \(\tilde \omega_z=\bigcup_{y\in \vertices\cap \omega_z}\omega_y\)
  are neighbourhoods of \(\omega_z\) that only overlap finitely often
  depending on the regularity of \(\mesh\). This together
  with~\eqref{Eq:regRes2} implies
  \begin{equation*}
    \|R_1X-U\|_{L^2(\Omega)}^2
    \le C_\mesh 
    \left( \sum_{z\in\vertices}h_z^2\|
    \Res_2(X)\|_{H^{-1}(\omega_z)}^2\right)^{\frac12} \|R_1X-U\|_{L^2(\Omega)}.
   \qedhere
  \end{equation*}
\end{proof}

Inserting Lemma~\ref{L:apost-deltaK} as well as the localization \eqref{Eq:Res-localisation} into Theorem~\ref{T:compact-case}, we obtain the
following alternative to Theorem~\ref{T:Apost1-ExampleCC}. 
\begin{thm}[Distributed control -- compact case]
\label{T:apost-ExampleCC}
 Suppose in addition to the setting of Theorem~\ref{T:Apost1-ExampleCC} that
  \(\Omega\subset \R^d\) is convex. Then we have
  \begin{multline*}
  	 \frac{1}{\sqrt{d+1}} \left(
  	  \sum_{z\in\vertices}\|\Res(X)\|_{H^{-1}(\omega_z)}^2
  	 \right)^{\frac{1}{2}}
  	 \le
  	 d_\alpha(x,X)
\\
    \le
    C
     \left(
      1+\frac{\kappa}{\sqrt{\alpha}}
       \,\left(
         \frac{\sum_{z\in\vertices}h_z^2\|\Res(X)\|_{H^{-1}(\omega_z)}^2}
           {\sum_{z\in\vertices}\|\Res(X)\|_{H^{-1}(\omega_z)}^2}
         \right)^{\!\frac12}\right)
    \left(
     \sum_{z\in\vertices}\|\Res(X)\|_{H^{-1}(\omega_z)}^2
    \right)^{\!\frac12}.
  \end{multline*}
The constant  \(C\) depends on the Poincar\'e 
  constant \(C_P\) and 
  the shape regularity of the mesh \(\mesh\);
  \(\kappa\) is  defined in~\eqref{kappa}.
\end{thm}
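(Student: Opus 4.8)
The plan is to assemble the abstract estimate of Theorem~\ref{T:compact-case} with the concrete ingredients of the distributed-control setting, so that essentially no new estimation is needed. First I would record that for problem~\eqref{Eq:model-oc} the abstract constants specialize to $m_a=M_a=1$ and $M=C_P$, and I identify the abstract dual norm $\|\Res(X)\|_*$ on $V^*=H^{-1}(\Omega)\times H^{-1}(\Omega)$ with the residual norm $\|\Res(X)\|_{H^{-1}(\Omega)}$ that is localized in~\eqref{Eq:Res-localisation}. With these values, the upper bound of Theorem~\ref{T:compact-case} reads, after distributing the product,
\[
  d_\alpha(x,X)
  \le
  \|\Res(X)\|_* + (\kappa+1)\,\frac{C_P}{\sqrt{\alpha}}\,\delta_\alpha^*(RX,X).
\]

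Next I would control the two terms on the right separately by the local indicators $\|\Res(X)\|_{H^{-1}(\omega_z)}$. For the first term I invoke the upper inequality of the localization~\eqref{Eq:Res-localisation}, which gives $\|\Res(X)\|_*\le C_\mesh^{1/2}\big(\sum_{z\in\vertices}\|\Res(X)\|_{H^{-1}(\omega_z)}^2\big)^{1/2}$. For the second term I chain the elementary bound~\eqref{Eq:delta<L2norm} with Lemma~\ref{L:apost-deltaK}, obtaining $\delta_\alpha^*(RX,X)\le C_\mesh\big(\sum_{z\in\vertices}h_z^2\|\Res(X)\|_{H^{-1}(\omega_z)}^2\big)^{1/2}$. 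This is the step where the hypotheses are genuinely used: the convexity of $\Omega$ enters through the $H^2$-regularity of Proposition~\ref{P:regularity-PoissonL2}, which underlies the duality argument proving Lemma~\ref{L:apost-deltaK}, while the compactness of $C^*$ and $I$ is what makes $\delta_\alpha^*$ carry the extra factor $h_z$ that eventually yields the ratio in the statement.

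The remaining step for the upper bound is cosmetic: I would factor out $\big(\sum_{z\in\vertices}\|\Res(X)\|_{H^{-1}(\omega_z)}^2\big)^{1/2}$ from the sum of the two bounds, which reproduces exactly the ratio displayed in the theorem, and absorb all mesh- and Poincaré-dependent constants into a single $C$ depending only on the shape regularity of $\mesh$ and on $C_P$; using $\kappa\ge 1$, visible from~\eqref{kappa}, to replace $\kappa+1$ by $2\kappa$ then delivers the stated form $C\big(1+\tfrac{\kappa}{\sqrt{\alpha}}(\text{ratio})\big)(\text{estimator})$. For the lower bound no compactness is required: the lower estimate of Theorem~\ref{T:compact-case} gives $\|\Res(X)\|_{H^{-1}(\Omega)}\le d_\alpha(x,X)$ (again $M_a=1$), and combining this with the left inequality of~\eqref{Eq:Res-localisation} yields $\tfrac{1}{\sqrt{d+1}}\big(\sum_{z\in\vertices}\|\Res(X)\|_{H^{-1}(\omega_z)}^2\big)^{1/2}\le d_\alpha(x,X)$. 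Since all the genuine analysis is already contained in Theorem~\ref{T:compact-case} and Lemma~\ref{L:apost-deltaK}, I expect no real obstacle; the only point demanding care is the bookkeeping of constants and verifying that the two separately localized terms combine into the \emph{single} ratio of the statement, rather than into a sum of two distinct estimator quantities.
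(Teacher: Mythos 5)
Your proposal is correct and follows exactly the paper's route: the paper proves Theorem~\ref{T:apost-ExampleCC} precisely by inserting Lemma~\ref{L:apost-deltaK} (via \eqref{Eq:delta<L2norm}) and the localization \eqref{Eq:Res-localisation} into Theorem~\ref{T:compact-case}, with the specializations $m_a=M_a=1$, $M=C_P$. Your bookkeeping of constants, including the use of $\kappa\ge 1$ to absorb $\kappa+1$ into $C\kappa$, and the $\alpha$-independent lower bound via the left inequality of \eqref{Eq:Res-localisation} are all sound.
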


\begin{rem}[Limitations in exploiting compactness]
\label{R:limitations}	
In Theorem~\ref{T:apost-ExampleCC} we have exploited the compactness
of $C^*$ and $I$ to obtain the accelerating factors $h_z$ in front of
the local contributions $\| \Res(X) \|_{H^{-1}(\omega_z)}$, $z \in
\vertices$. Notice that the use of \eqref{Eq:delta<L2norm} entails
that Theorem~\ref{T:apost-ExampleCC} does not exploit the compactness
of $C$ and $I^*$ associated with the embedding $L^2(\Omega) \subset
H^{-1}(\Omega)$. Thus, the question arises  whether the upper bound in
Theorem~\ref{T:apost-ExampleCC} can be improved by directly bounding
the potentially smaller quantity \(\delta_\alpha^*(RX,X)\). The
line of argument allows for such an improvement in principle, but
hinges on the combination of regularity properties for the state
equation and its adjoint as well as on the order $\ell$ of their
finite element solutions. The former obstructs an improvement of
Theorem~\ref{T:apost-ExampleCC} in the case at hand. 

To illustrate this, let us consider the case with \(\Omega=\Omega_{\mathsf{Q}}=\Omega_{\mathsf{W}}\)
and \(a=-\infty\) and \(b=\infty\), i.e., \(\Pi_K=\operatorname{id}\), leading to
\begin{align*}
	{M^2}\delta_\alpha^*(RX,R)^2= \norm{R_2X-Z }_{H^{-1}(\Omega)}^2
	+
	\norm{R_1X-U}_{H^{-1}(\Omega)}^2.
\end{align*}
As in the proof of Lemma~\ref{L:apost-deltaK}, let us focus on the
second term on the right-hand side. In view of 
\begin{equation*}
 \norm{R_1X-U}_{H^{-1}(\Omega)}
 =
 \sup_{\vphi \in \mathring{H}^1(\Omega)}
  \frac{\langle R_1 X - U, \vphi \rangle}{\norm{\nabla\vphi}_{L^2(\Omega)}},
\end{equation*}
we consider
\begin{equation*}
  - \Delta \psi = \vphi \in \mathring{H}^1(\Omega) \text{ in }\Omega,
 \qquad
   \psi = 0 \text{ on } \partial\Omega.
\end{equation*}
If we had $\psi \in H^3(\Omega)$ with $\snorm{\psi}_{H^3(\Omega)} \leq C \| \nabla \vphi \|_{L^2(\Omega)}$ and $\ell > 1$, then minor modifications in the proof of  Lemma~\ref{L:apost-deltaK} would imply
 \begin{equation*}
	\|R_1X-U\|_{H^{-1}(\Omega)}^2
	\le 
	C_\mesh^2\sum_{z\in\vertices}h_z^4 \|\Res(X)\|^2_{H^{-1}(\omega_z)}.
\end{equation*}
However, the supposed regularity theorem is not true for polyhedral
domains or would not be useful in the case $\ell=1$ of linear finite
elements. As an alternative, one could invoke also more sophisticated
regularity theorems with weights. We do not consider this option here
for simplicity. 
\end{rem}

For the numerical comparison of bounds as in Theorem~\ref{T:Apost1-ExampleCC} and Theorem~\ref{T:apost-ExampleCC}, we consider
\begin{equation}
\label{distributed-control-example}
\begin{multlined}
	\min_{(q,u)\in K \times H^1( \Omega)}
	 \frac12 \norm{ u-\ud }_{L^2(\Omega_\textsf{W})}^2
	+
	\int_{ \Omega} g_1 u
	+
	\frac\alpha2 \norm{q}_{L^2(\Omega_\textsf{Q})}^2
\\
	\text{subject to}\quad
	-\Delta u = f+q \text{ in } \Omega\quad\text{and}\quad u=g_2 \text{ on } \partial \Omega,
\end{multlined}
\end{equation}
where the domains $\Omega$,  \(\Omega_\textsf{W}\), \(\Omega_\textsf{Q}\) as in Figure~\ref{F:distributed-control-domain}, 
$K = \{ q \in L^2(\Omega) \mid -1\leq q \leq 1 \}$,
$u= 3r^{\frac{4}{3}}\sin(\frac{4}{3} \theta) $,
$z=4(y-y^2)(1-x)(x+y)$,
$q = \chi_{\Omega_\textsf{Q}}\Pi_{[-1,1]} (\frac{-1}{\sqrt{\alpha}} z)$,
$f= -q$,
$\ud = u+\Delta z$,
$g_2=u$,  and
$g_1 = \chi_{\Omega_\textsf{Q}} \sqrt{\alpha} z$.

 \begin{figure}[ht]
 		\begin{tikzpicture}[scale=2.8]
 			\draw[axis] (-1,0) -- (1.2,0) node[right=2* \nudge cm] {\(x_1\)};
 			\draw[axis] (0,0) -- (0,1.2) node[above=2*\nudge cm] {\(x_2\)};
 			\begin{scope}
 				\draw[line] (0,0) -- (1,1) -- (-1,1) -- (0,0);
 				\clip (0,0) -- (1,1) -- (-1,1) -- (0,0);
 				\fill[pattern=north west lines] (-4,-4) rectangle (4,4);
 			\end{scope}
 			\begin{scope}
 				\draw[line] (0,0) -- (1,0) -- (1,1) -- (0,0);
 				\clip (0,0) -- (1,0) -- (1,1) -- (0,0);
 				\fill[pattern=north east lines] (-4,-4) rectangle (4,4);
 			\end{scope}
 			\draw[->] (1.25,0.5) -- (0.8,0.35);
 			\node at (1.35,0.5) {$\Omega_\textsf{Q}$};
 			\draw[->] (-0.5,0.35) -- (-0.2,0.5);
 			\node at (-0.55,0.3) {$\Omega_\textsf{W}$};
 			\node at (1,-0.1) {(1,\,0)};
 			\node at (1, 1.1) {(1,\,1)};
 			\node at (-1, 1.1) {(-1,\,1)};
 			\node at (-0.15,-0.1) {(0,\,0)};
 		\end{tikzpicture}
 		\caption{Domain $\Omega$ and subdomains
                 \(\Omega_\textsf{W}, \Omega_\textsf{Q}\) 
                  for Example~\eqref{distributed-control-example}.\label{F:distributed-control-domain}}
 	\end{figure}

The numerical simulations are carried out with linear elements. Adaptive mesh refinement is driven by a standard residual error estimator, see, e.g.,\cite{Verfuerth:2013}, that quantifies the local residual norms in Theorem~\ref{T:Apost1-ExampleCC}. The estimator is scaled such that it coincides with the error for large \(\alpha=10^6\) and a fine, adaptive
grid, providing a benchmark close to the situation of a pure Poisson problem. To mark elements for refinement, D\"orfler's strategy~\cite{Doerfler:1996} is used with parameter \(0.6\). 

Figure~\ref{F:rates-distributed-control-example} displays the $d_\alpha$-error and the bounds in Theorem~\ref{T:Apost1-ExampleCC} and Theorem~\ref{T:apost-ExampleCC} using compactness, while Figure \ref{F:meshes--4p1} gives an idea of the underlying adaptive mesh refinement. First, let us observe that the error may or may not be close to the $\alpha$-independent lower bound on coarse meshes. Next, in line with the Remarks~\ref{R:vanish-regularization} and~\ref{R:abstractCompact1}, we see that the upper bound using compactness is worse than the general one from Theorem~\ref{T:Apost1-ExampleCC} on coarse meshes, but can provide a much smaller gap on fine meshes; see part (A) with $\alpha = 10^{-4}$. However, this improvement hinges on the relationship of $\alpha$ and the available computational resources; see part (B) with $\alpha=10^{-8}$. 

\begin{figure}[htb]
  \begin{subfigure}[b]{0.49\textwidth}
		\begin{tikzpicture}[scale=0.7]
			\begin{loglogaxis}[xlabel={Dofs}, ylabel={Size},legend pos=outer north east]
				\addplot[black, mark=+] table[x=Dofs , y= dkalpha] {./distributed-control-4p1.txt};
				\addlegendentry{$d_\alpha$ error}
				\addplot[red, mark=square*] table[x=Dofs , y= Asbou] {./distributed-control-4p1.txt};
				\addlegendentry{as. upper }
				\addplot[blue, mark=*] table[x=Dofs , y= Nasbou] {./distributed-control-4p1.txt};
				\addlegendentry{dir. upper }
				\addplot[blue, mark=o] table[ x=Dofs, y=esti ] {./distributed-control-4p1.txt};
				\addlegendentry{lower}
                                \legend{}
			\end{loglogaxis}
		\end{tikzpicture}
		\caption{ $\alpha = 10^{-4}$ }
	\end{subfigure}\hfill
	\begin{subfigure}[b]{0.49\textwidth}
			\begin{tikzpicture}[scale=0.7]
			\begin{loglogaxis}[xlabel={Dofs}, ylabel={Size}
                          ]
				\addplot[black, mark=+] table[x=Dofs , y= dkalpha] {./distributed-control-8p1.txt};
				\addlegendentry{$d_\alpha$-error}
				\addplot[red, mark=square*] table[x=Dofs , y= Asbou] {./distributed-control-8p1.txt};
				\addlegendentry{cubd}
				\addplot[blue, mark=*] table[x=Dofs , y= Nasbou] {./distributed-control-8p1.txt};
				\addlegendentry{gubd}
				\addplot[blue, mark=o] table[ x=Dofs, y=esti ] {./distributed-control-8p1.txt};
				\addlegendentry{glbd}
                                \legend{}
			\end{loglogaxis}
		\end{tikzpicture}
		\caption{ $\alpha = 10^{-8}$ }
	\end{subfigure}
	\caption{$d_\alpha$-error (+) and associated general upper (\textcolor{blue}{$\bullet$}) and lower (\textcolor{blue}{$\circ$}) bound, as well as upper bound with compactness (\textcolor{red}{$\textcolor{red}{\rule{.9ex}{.9ex}}$}) versus DOFs for Example~\eqref{distributed-control-example}.\label{F:rates-distributed-control-example}}
\end{figure}
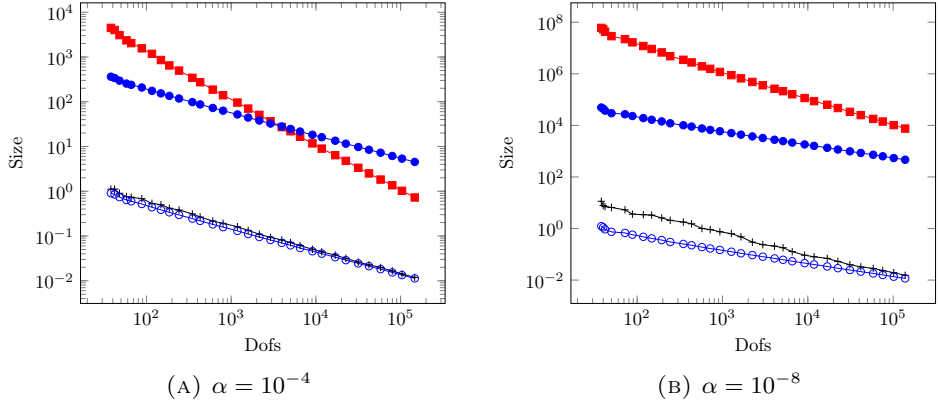

\begin{figure}[htb]
                      \includegraphics[width=.48\linewidth]{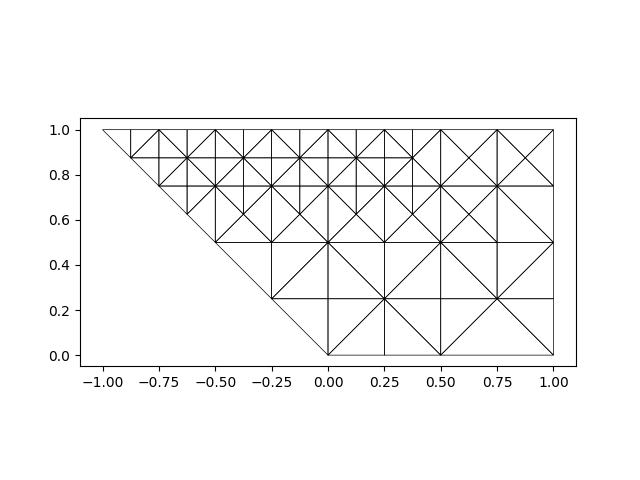}
                      \includegraphics[width=.48\linewidth]{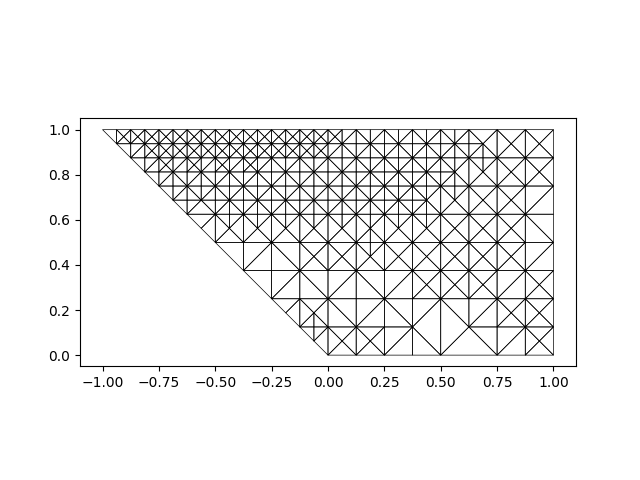}
                      \\
                      \includegraphics[width=.48\linewidth]{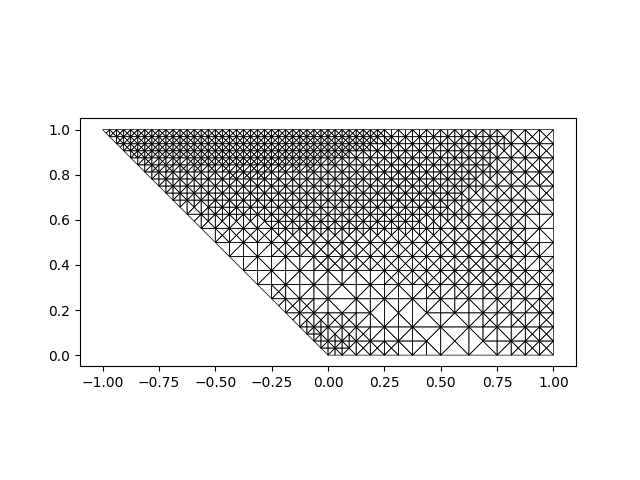}
                      \includegraphics[width=.48\linewidth]{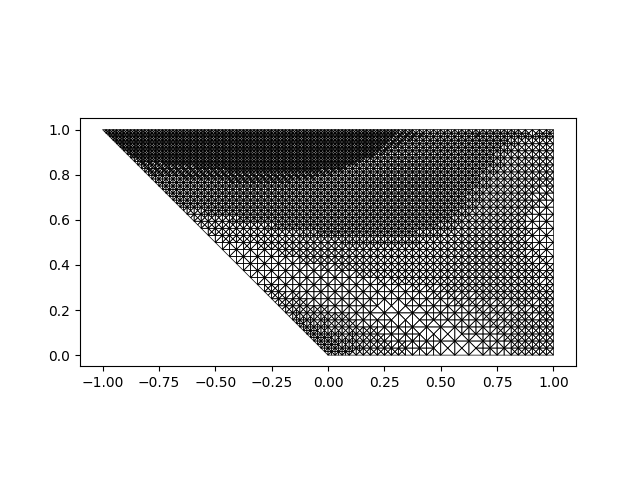}
                      \caption{Adaptive mesh refinement history for Example~\eqref{distributed-control-example} with $\alpha = 10^{-4}$.\label{F:meshes--4p1}}
\end{figure}

We turn to applying the results of Section~\ref{sec:apost-unconstraint} and suppose that there are no control constraints, i.e.\  \(a=-\infty\) and \(b=\infty\). Theorem~\ref{T:apostNCC}, combined with Lemma~\ref{L:apost-deltaK} and the localization~\eqref{Eq:Res-localisation}, immediately yields the following a posteriori bounds for the combined $\mathring{H^{1}}(\Omega)$-error of the states.
\begin{thm}[Bounding the $\norm{\cdot}$-error for distributed control]
\label{T:apost-ExampleNCC}
Suppose in addition to the setting in Theorem~\ref{T:apost-ExampleCC} that no control constraints apply.
Then we have 
 \begin{multline*}
    \max\left\{
     \frac{\sqrt{\alpha}}{\sqrt{\alpha}+C_P},
     1-C \kappa\, F\big( \Res(X) \big) 
    \right\}
 \left(\frac1{d+1}\sum_{z\in\vertices}\|\Res(X)\|_{H^{-1}(\omega_z)}^2\right)^{\frac12}
\\
 \le
 \|\nabla(x-X)\|_{L^2(\Omega)}
 \le
 C
    \left(1+\kappa\,F\big( \Res(X) \big) 
      \right)\left(\sum_{z\in\vertices}\|\Res(X)\|_{H^{-1}(\omega_z)}^2\right)^{\frac12} 
\end{multline*}
with
\begin{equation*}
 F\big( \Res(X) \big)
 =
 \left(\frac{\sum_{z\in\vertices}h_z^2\|\Res(X)\|_{H^{-1}(\omega_z)}^2}{\sum_{z\in\vertices}\|\Res(X)\|_{H^{-1}(\omega_z)}^2}\right)^{\frac12}.
\end{equation*}
The constant  \(C\) depends on the Poincar\'e 
constant \(C_P\) and 
the shape regularity of the mesh \(\mesh\);
\(\kappa\) is  defined in~\eqref{kappa}.
\end{thm}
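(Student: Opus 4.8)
The plan is to specialize the abstract bounds of Theorem~\ref{T:apostNCC} to the present finite element setting and then eliminate its two non-computable ingredients on the right-hand side, namely $\snorm{X-RX}$ and $\norm{\Res(X)}_*$, with the help of Lemma~\ref{L:apost-deltaK} and the localization~\eqref{Eq:Res-localisation}. First I would record the structural constants for problem~\eqref{Eq:model-oc}: since $a(\cdot,\cdot)=(\nabla\cdot,\nabla\cdot)_{L^2(\Omega)}$ one has $m_a=M_a=1$, while $M=\max\{M_I,M_C\}=C_P$ by Poincar\'e's inequality. Moreover, the product norm $\norm{\cdot}$ of~\eqref{Hilbert-product} becomes the combined Dirichlet norm, so that $\norm{x-X}=\norm{\nabla(x-X)}_{L^2(\Omega)}$ in the abbreviated notation of the statement, and the dual norm satisfies $\norm{\Res(X)}_*=\norm{\Res(X)}_{H^{-1}(\Omega)}$.

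Next I would translate the seminorm $\snorm{X-RX}$ of~\eqref{enorm}. Because here $I$ and $C^*$ are the restrictions to $\Omega_{\mathsf W}$ and $\Omega_{\mathsf Q}$, respectively, and $\Omega_{\mathsf W},\Omega_{\mathsf Q}\subseteq\Omega$, one gets
\begin{equation*}
  \snorm{X-RX}^2
  =
  \norm{U-R_1X}_{L^2(\Omega_{\mathsf W})}^2
  +
  \norm{Z-R_2X}_{L^2(\Omega_{\mathsf Q})}^2
  \le
  \norm{U-R_1X}_{L^2(\Omega)}^2
  +
  \norm{Z-R_2X}_{L^2(\Omega)}^2.
\end{equation*}
The right-hand side is exactly the quantity estimated in Lemma~\ref{L:apost-deltaK}, whence $\snorm{X-RX}^2\le C_\mesh^2\sum_{z\in\vertices}h_z^2\norm{\Res(X)}_{H^{-1}(\omega_z)}^2$. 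Combining this with the lower localization in~\eqref{Eq:Res-localisation}, which bounds $\norm{\Res(X)}_*$ from below by $(\tfrac1{d+1}\sum_{z\in\vertices}\norm{\Res(X)}_{H^{-1}(\omega_z)}^2)^{1/2}$, I obtain the key estimate $\snorm{X-RX}/\norm{\Res(X)}_*\le\sqrt{d+1}\,C_\mesh\,F(\Res(X))$.

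Then I would assemble the two bounds. For the upper bound I insert the key estimate into the upper bound of Theorem~\ref{T:apostNCC} (with $m_a=M_a=1$, $M=C_P$), absorb $C_P$, $C_\mesh$ and $\sqrt{d+1}$ into a single constant $C$, and finally apply the upper localization of~\eqref{Eq:Res-localisation} to replace $\norm{\Res(X)}_*$ by $(\sum_{z\in\vertices}\norm{\Res(X)}_{H^{-1}(\omega_z)}^2)^{1/2}$. For the lower bound I leave the first entry $\sqrt\alpha/(\sqrt\alpha+C_P)$ of the maximum untouched, while the second entry $1-\tfrac{\kappa}{C_P}\snorm{X-RX}/\norm{\Res(X)}_*$ is bounded below, using the key estimate, by $1-C\kappa\,F(\Res(X))$; since both entries are thereby decreased, so is the maximum, and a final use of the lower localization yields the stated lower bound.

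The derivation is essentially bookkeeping, so I expect no serious obstacle; the statement's claim that it follows \emph{immediately} is justified. The one point demanding care is the lower bound: one must check that replacing each entry of the maximum by a smaller quantity indeed produces a valid (smaller) lower bound, and that the ratio $\snorm{X-RX}/\norm{\Res(X)}_*$ is controlled from \emph{above} — not below — so that the sign in $1-C\kappa F(\Res(X))$ comes out correctly. A secondary subtlety is tracking where the $\tfrac1{d+1}$ factor enters: it originates solely from the lower localization used to bound the denominator in the ratio $F(\Res(X))$, and must not be conflated with the shape-regularity constant $C_\mesh$ hidden in $C$.
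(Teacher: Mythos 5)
Your proposal is correct and follows exactly the route the paper intends: the paper gives no separate proof but states that Theorem~\ref{T:apostNCC}, Lemma~\ref{L:apost-deltaK} and the localization~\eqref{Eq:Res-localisation} ``immediately'' yield the result, and your assembly --- specializing $m_a=M_a=1$, $M=C_P$, bounding $\snorm{X-RX}$ by the full $L^2(\Omega)$-errors of Lemma~\ref{L:apost-deltaK}, controlling the ratio $\snorm{X-RX}/\norm{\Res(X)}_*$ from above via the lower localization, and then inserting this into both bounds of Theorem~\ref{T:apostNCC} --- is precisely that bookkeeping, including the correct handling of the sign in the max and of the $\tfrac{1}{d+1}$ factor.
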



%
%
\subsection{Boundary control}
\label{sec:bndry-control}
%
This section illustrates how to apply the a posteriori bounds from Section~\ref{sec:apost-ellRecon} to optimal control problems with constrained Neumann boundary control.

Let \(\Omega\subset \R^d\) be a domain with polyhedral boundary \(\Gamma\)
and outward-pointing normal $\mathsf{n}$. Given some lower bound \(a\in \R\cup\{-\infty\}\), let 
\begin{equation}
\label{Eq:model-oc-bc;K}
	K
	=
	\left\{ q\in L^2(\Gamma) \mid \int_\Gamma q\ge a \right\},
\end{equation}
denote the set of admissible controls. For targets \(u_d^\Omega\in L^2(\Omega)\), \(u_d^\Gamma\in L^2(\Gamma)\) and cost parameter \(\alpha>0\), we consider 
\begin{gather}
\label{Eq:model-oc-bc}
  \begin{multlined}
  \min_{(q,u)\in K \times H^1(\Omega)}
   \frac12\|u-u_d^\Omega\|^2_{L^2(\Omega)}
   +
   \frac12\|u-u_d^\Gamma\|_{L^2(\Gamma)}^2
   +
   \frac{\alpha}2\|q\|_{L^2(\Gamma)}^2
\\
  \text{subject to} \quad
  -\Delta u+u = 0~\text{in}~\Omega
  \quad\text{and}\quad
  \partial_{\textsf{n}}u=q~\text{on}~\Gamma,
  \end{multlined}
\end{gather}
where \(\partial_{\textsf{n}}\) denotes the normal derivative. 
%
This problem fits into the framework of Section~\ref{sec:2} with the Hilbert spaces
\begin{alignat*}{2}
 V_1&=V_2= H^1(\Omega),
 &(v_1,v_2)_{V_i}
 &=
 (v_1,v_2)_{H^1(\Omega)},
 \;\; i=1,2,
\\
 Q&=L^2(\Gamma),
 &(q_1,q_2)_Q &= (q_1,q_2)_{L^2(\Gamma)},
\\
 W&=L^2(\Omega)\times L^2(\Gamma), \quad
 &(w_1,w_2)_W &= (w_1^\Omega,w_2^\Omega)_{L^2(\Omega)}
 + (w_1^\Gamma, w_2^\Gamma)_{L^2(\Gamma)},
\end{alignat*}
writing $w_i = (w_i^\Omega, w_i^\Gamma) \in W$, $i=1,2$. The other ingredients  are given by \eqref{Eq:model-oc-bc;K},
\begin{gather*}
  a(v,\vphi)
  =
  \int_\Omega \nabla v\cdot\nabla\vphi + v\vphi
  =
  (v,\vphi)_{H^1(\Omega)},
  \qquad
  m_a = 1 = M_a,
  \\
  \langle Cq,\vphi \rangle
  =
  \int_\Gamma q \vphi
  =
  (q,\vphi_{|\Gamma})_{L^2(\Gamma)}
  =
  ( q, C^* \vphi)_{L^2(\Gamma)}, \qquad M_C=C_\Gamma,
  \\
  Iv = (v,v_{|\Gamma}),
  \qquad
  \langle I^*(w^\Omega,w^\Gamma), v \rangle
  =
  \int_\Omega v w^\Omega + \int_\Gamma v_{|\Gamma} w^\Gamma, \qquad M_I=(1+C_\Gamma),
  \\
  \Pi_K v=v+\frac1{|\Gamma|}\max\left\{0,a-\int_\Gamma v\right\},
\end{gather*}
where \(C_\Gamma\) is the embedding constant \(H^1(\Omega)\subset
L^2(\Gamma)\) and we write $\langle \cdot,\cdot\rangle$ for the
duality pairing in $H^1(\Omega)^* \times H^1(\Omega)$.

The variational formulation of the reduced and rescaled optimality system~\eqref{mod-vred-rows} reads:
find \((u,z)\in H^1(\Omega)\times H^1(\Omega)\) such that, for all $\vphi_1, \vphi_2\in H^1(\Omega)$, 
\begin{subequations}
\label{mod-vred-bndry_control}
\begin{gather}
 \begin{multlined}
 	\int_\Omega \nabla\vphi_1\cdot\nabla z +\vphi_1 z
    -
 	\tfrac{1}{\sqrt{\alpha}} \left(
 	\int_\Omega u\vphi_1 + \int_{\Gamma} u\vphi_1
 	\right)
\\ 
 	=
 	-\tfrac{1}{\sqrt{\alpha}} \left(
 	\int_\Omega \ud^\Omega\vphi_1
 	+
 	\int_\Gamma \ud^\Gamma\vphi_1
 	\right),
 \end{multlined}
\\
 \int_\Omega\nabla u\cdot\nabla\vphi_2+u\vphi_2
 -
 \int_\Gamma\Pi_K(- \tfrac{1}{\sqrt{\alpha}} z)\vphi_2
 =
 0.
\end{gather}
\end{subequations}

Using the finite element framework of Section~\ref{sec:Example}, its discretisation reads as follows:
find \((U,Z)\in S_\ell^1(\mesh)\times S_\ell^1(\mesh)\) such that, for all $\Phi_1, \Phi_2 \in   S_\ell^1(\mesh)$, 
\begin{subequations}
\label{mod-vred-bc-discretisation}
\begin{gather}
 \begin{multlined}
 \int_\Omega \nabla\Phi_1\cdot\nabla Z +\Phi_1 Z
  - \tfrac{1}{\sqrt{\alpha}} \left( 
   \int_\Omega U\Phi_1 + \int_{\Gamma} U\Phi_1
  \right)
\\
  =
  -\tfrac{1}{\sqrt{\alpha}} \left(
   \int_\Omega \ud^\Omega\Phi_1 + \int_\Gamma \ud^\Gamma\Phi_1
  \right),
 \end{multlined}
\\
 \int_\Omega\nabla U\cdot\nabla\vphi_2+U\vphi_2
 -
 \int_\Gamma\Pi_K(- \tfrac{1}{\sqrt{\alpha}} Z)\vphi_2
 = 0.
\end{gather}
\end{subequations}
Consequently, in the solution \(X=(U,Z)\) of~\eqref{mod-vred-bc-discretisation}, the residual 
\begin{align}\label{Eq:ResExample-bc}
  \begin{aligned}
    \big\langle \Res(X),\, (\vphi_1,\vphi_2) \big\rangle
    &\defas
    -\int_\Omega\nabla U\cdot\nabla\vphi_2 -U\vphi_2+
    \int_\Gamma\Pi_K(- \tfrac{1}{\sqrt{\alpha}} Z)\vphi_2
    \\
    &\quad
    -\tfrac1{\sqrt{\alpha}} \int_\Omega u_d^\Omega\vphi_1
    -
    \tfrac1{\sqrt{\alpha}} \int_\Gamma u_d^\Gamma\vphi_1
    \\
    &\quad
    -\int_\Omega\nabla\vphi_1\cdot\nabla Z -\vphi_1Z
    +
    \tfrac{1}{\sqrt{\alpha}} \int_\Omega U\vphi_1
    +
    \tfrac{1}{\sqrt{\alpha}} \int_\Gamma U\vphi_1 ,
  \end{aligned}
\end{align}
 \((\vphi_1,\vphi_2)\in H^1(\Omega)\times H^1(\Omega)\),
satisfies the orthogonality condition
\begin{align}\label{Eq:Galerkin-orthogonality-bc}
  \langle\Res(X),\,\Phi\rangle=0\qquad\text{for
  all}~\Phi\in {S}_\ell^1(\mesh) \times {S}_\ell^1(\mesh) .
\end{align}
Similarly as in~\eqref{Eq:Res-localisation}, we can localize the norm of the residual by 
\begin{align}\label{Eq:Res-localisation-bc}
  \frac{1}{d+1}\sum_{z\in\vertices}\|\Res(X)\|^2_{H_z^*}\le
  \|\Res(X)\|^2_{(H^{1}(\Omega))^*}\le C_{\mesh}\sum_{z\in\vertices}\|\Res(X)\|^2_{H_z^*}
\end{align}
with \(H_z := H^{-1}(\omega_z)\) for interior vertices  \(z\in
\vertices\cap\Omega\) and 
\(\{ v \in H^1(\omega_z) \mid v = 0 \text{ on } \partial\omega_z
\setminus \partial\Omega\}\) for $z \in \vertices\cap\Gamma$.
For the proof, we refer to Lemma~\ref{L:delta<hRes;Besov}, 
where similar
arguments are used. 
Inserting the localization \eqref{Eq:Res-localisation-bc} into Theorem~\ref{T:Apost-1} yields the following result.

\begin{thm}[Bounding $d_\alpha$-error for boundary control -- general case]
\label{T:Apost1-ExampleBC}
Let \(x=(u,z)\) be the exact states of the optimal control problem~\eqref{Eq:model-oc-bc}, where the adjoint state is rescaled; cf.~\eqref{mod-vred-bndry_control}. Furthermore, let \(X=(U,Z)\) be their finite element approximations from~\eqref{mod-vred-bc-discretisation} and define its residual by~\eqref{Eq:ResExample-bc}. Then we have the equivalence
\begin{align*}
    \frac{1}{d+1}\sum_{z\in\vertices}\|\Res(X)\|^2_{H_z^*}
    \le
    d_\alpha(x,X)^2
    \le
    \kappa \,C_\mesh \sum_{z\in\vertices}\|\Res(X)\|^2_{H_z^*},
\end{align*}
where the constant \(C_\mesh\) depends on the shape regularity of
  the mesh and \(\kappa\) is defined in~\eqref{kappa}.
\end{thm}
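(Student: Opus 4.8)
The plan is to obtain this boundary-control estimate exactly as its distributed-control analogue Theorem~\ref{T:Apost1-ExampleCC}, that is, by feeding the two-sided localization \eqref{Eq:Res-localisation-bc} into the abstract error-residual relation of Theorem~\ref{T:Apost-1}. All the genuine work has already been carried out upstream: Theorem~\ref{T:Apost-1}, which itself rests on the continuity and inf-sup properties of $b_\alpha$ in Theorem~\ref{T:bK}, supplies the equivalence of the $d_\alpha$-error with the global dual residual norm, while \eqref{Eq:Res-localisation-bc} decomposes that global norm into the local contributions $\|\Res(X)\|_{H_z^*}$. So the proof is purely a matter of chaining these two facts.

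First I would record the stability constants for the problem at hand. From the setup of \eqref{Eq:model-oc-bc}, the bilinear form is $a(v,\vphi)=(v,\vphi)_{H^1(\Omega)}$, i.e.\ the state-space inner product itself; hence both the continuity constant and the inf-sup constant equal one, $m_a=M_a=1$. Substituting this into Theorem~\ref{T:Apost-1}, applied with the finite element solution $X=(U,Z)$ from \eqref{mod-vred-bc-discretisation} in the role of $\tilde x$ and the exact states $x=(u,z)$, yields the clean chain
\[
  \|\Res(X)\|_* \le d_\alpha(x,X) \le \kappa\,\|\Res(X)\|_*,
\]
where $\kappa$ is from \eqref{kappa} and $\|\cdot\|_*$ is the dual norm on $V^*=H^1(\Omega)^*\times H^1(\Omega)^*$ induced by \eqref{Hilbert-product}.

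Next I would square this chain and couple it with the two sides of the localization \eqref{Eq:Res-localisation-bc}. The lower localization together with the lower abstract bound gives
\[
  \frac{1}{d+1}\sum_{z\in\vertices}\|\Res(X)\|_{H_z^*}^2
  \le
  \|\Res(X)\|_*^2
  \le
  d_\alpha(x,X)^2,
\]
while the upper abstract bound together with the upper localization produces the matching upper estimate $d_\alpha(x,X)^2\le \kappa\,C_\mesh\sum_{z\in\vertices}\|\Res(X)\|_{H_z^*}^2$; together these are precisely the asserted equivalence. I would also point out, as in the distributed case, that the Galerkin orthogonality \eqref{Eq:Galerkin-orthogonality-bc} is what renders each local contribution $\|\Res(X)\|_{H_z^*}$ computable from the discrete solution $X$ alone.

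The single problem-specific ingredient, and the one place genuine care is required, is the localization \eqref{Eq:Res-localisation-bc} rather than the estimate of the present theorem. In contrast to \eqref{Eq:Res-localisation}, the local dual spaces must be adapted to the Neumann boundary control: at an interior vertex one keeps $H_z=H^{-1}(\omega_z)$, but at a boundary vertex one must replace $\mathring H^1(\omega_z)$ by $\{v\in H^1(\omega_z)\mid v=0\text{ on }\partial\omega_z\setminus\partial\Omega\}$, so that test functions remain free on $\Gamma$ and the boundary integrals appearing in \eqref{Eq:ResExample-bc} are faithfully captured. This is where the real obstacle lies, but it is not resolved within this proof: its verification follows the arguments of Lemma~\ref{L:delta<hRes;Besov}, and I would simply invoke \eqref{Eq:Res-localisation-bc} as already established there, whence the theorem follows by the combination above.
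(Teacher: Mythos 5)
Your proposal is correct and takes essentially the same route as the paper, whose entire proof is the single sentence preceding the theorem---``Inserting the localization \eqref{Eq:Res-localisation-bc} into Theorem~\ref{T:Apost-1} yields the following result''---which you have merely made explicit by recording $m_a=M_a=1$, applying Theorem~\ref{T:Apost-1} with $\tilde x = X$, and chaining the squared bounds with the two sides of \eqref{Eq:Res-localisation-bc}. One small wrinkle worth noting: squaring $d_\alpha(x,X)\le\kappa\,\|\Res(X)\|_*$ actually produces the factor $\kappa^2$, not $\kappa$, in the upper estimate, but since this discrepancy is present in the paper's own statement (here and in Theorem~\ref{T:Apost1-ExampleCC}) it is an issue inherited from the theorem being proved rather than a gap in your argument.
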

Next, we shall use the fact, that the operators $C^*$ and $I$ involved
in the definition \eqref{eq:Rell} of the reconstruction 
are compact. Indeed,  the compactness of $C^*$ originates in the
trace evaluation $H^1(\Omega) \ni \vphi \mapsto \vphi_\Gamma \in
L^2(\Gamma)$, while the one of the observation operator $I$ arises
also with the help of the embedding $H^1(\Omega) \subset
L^2(\Omega)$. We therefore can apply the results of
Section~\ref{sec:apost-ellRecon} and need to quantify the compactness. To prepare this, we
use the Lipschitz continuity of \(\Pi_K\) with Lipschitz constant \(1\) to obtain
\begin{align}
\label{Eq:delta<L2norm-bc}
	\begin{aligned}
		\delta_\alpha^*(R X,X)^2&\le\delta_\alpha(R
		X,X)^2
		%
		\\
		&\le \norm{R_2X-Z }_{L^2(\Gamma)}^2 +
		\norm{R_1X-U}_{L^2(\Omega)}^2+\norm{R_1X-U}_{L^2(\Gamma)}^2,
	\end{aligned}
\end{align}
where \(R=(R_1,R_2): {S}_\ell^1(\mesh) \times {S}_\ell^1(\mesh) \to
H^1(\Omega)\times H^1(\Omega)\) is the auxiliary operator defined
in~\eqref{eq:Rell} and, as before, \(X=(U,Z)\) is the discrete solution
of~\eqref{mod-vred-bc-discretisation}. Combining their definitions
reveals the following orthogonality relationships: for all
\(\Phi_1,\Phi_2\in {S}_\ell^1(\mesh)\), we have
\begin{align}
\label{Eq:Ritz-bc}
\begin{multlined}
	\int_{\Omega} \nabla (R_1X-U)\cdot\nabla \Phi_2 + (R_1X-U)\Phi_2
\\
	=
	0
	=
	\int_{\Omega}\nabla (R_2X-U)\cdot\nabla \Phi_1 + (R_2X-U)\Phi_1.
\end{multlined}
\end{align}
In other words, \(U\) and $Z$ are, respectively the Ritz projections in $S_\ell^1(\mesh)$ of $R_1 X$ and $R_2 X$ with respect to the bilinear form \( (\cdot,\cdot)_{H^1(\Omega)} \). Consequently, 
similarly to the preceding section, we can quantify the available compactness by means of a duality argument thanks to the orthogonality \eqref{Eq:Galerkin-orthogonality-bc} of $\Res(X)$.

To this end, we restrict ourselves to polyhedral convex domains
\(\Omega\subset\R^d\) and  analyze the regularity of the solution of
the following Neumann problem: given $g=(g^\Omega,g^\Gamma) \in
L^2(\Omega) \times L^2(\Gamma)$, find $v_g \in H^1(\Omega)$ such that 
\begin{align*}
	-\Delta v_g+v_g=g^\Omega\quad\text{in}~\Omega\quad\text{and}\quad \partial_{\textsf{n}}v_g=g^\Gamma,
\end{align*}
which weakly reads as
\begin{align}
	\label{Eq:EllNeumann}
	\forall \vphi\in H^1(\Omega)
	\qquad
	\int_\Omega\nabla v_g\cdot\nabla\vphi + v_g\vphi
	=
	\int_\Omega g^\Omega \vphi +\int_\Gamma g^\Gamma \vphi.
\end{align}
Notice that the critical term on the right-hand side involves an
$L^2(\Gamma)$-trace of the test function $\vphi$.  Hence, in order to
precisely measure its regularity, we shall need a sharp trace theorem
for $L^2(\Gamma)$. Using fractional Sobolev spaces, the trace operator
is bounded as a map from $H^{\frac{1}{2}+\epsilon}(\Omega)$ to
$H^{\epsilon}(\Gamma)$ only for $\epsilon>0$, and therefore is not sharp
for $L^2(\Gamma)$. For a sharp trace theorem and thus avoiding
$\epsilon$, we invoke Besov spaces. Given $s>0$, $p,q \in [1,\infty]$,
we define the Besov space $B^s_q(L^p(\Omega))$ and its norm $\| \cdot
\|_{B^s_q(L^p(\Omega))}$ as in \cite[Section~2]{DeVore.Sharpley:1993}
through intrinsic moduli of smoothness.  Furthermore, we need  the
real interpolation method of Peetre based upon the so-called
$K$-functional; see, e.g., \cite{Brenner.Scott:2008}. Given two Banach
spaces $X_1$, $X_2$ with $X_1 \subset X_2$  and parameters $\theta \in
(0,1)$, $q \in [1,\infty]$, we denote its interpolation by
$(X_1,X_2)_{\theta,q}$ and its norm by $\| \cdot
\|_{(X_1,X_2)_{\theta,q}}$. 

\begin{prop}[Extra regularity for boundary control]
\label{P:Besov-reg-Neumann}
There is a constant $C_\Omega$ depending only on the convex domain $\Omega\subset\R^d$ such that, for any \( g=(g^\Omega, g^\Gamma) \in L^2(\Omega) \times L^2(\Gamma)\), the unique solution \(v_g \in H^1(\Omega)\) of~\eqref{Eq:EllNeumann} satisfies
\begin{align*}
	\|v_g\|_{B^{\frac32}_\infty(L^2(\Omega))}
	\le
	C_{\Omega} \left(
	 \|g^\Omega\|_{L^2(\Omega)}+\|g^\Gamma\|_{L^2(\Gamma)}
	\right).
\end{align*}
\end{prop}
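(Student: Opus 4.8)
The plan is to exploit the linearity of \eqref{Eq:EllNeumann} and split $v_g = v_1 + v_2$, where $v_1\in H^1(\Omega)$ solves the problem with interior source $g^\Omega$ and homogeneous Neumann data, and $v_2\in H^1(\Omega)$ solves the problem with vanishing interior source and boundary datum $g^\Gamma$. The contribution $v_1$ is the easy part: since $\Omega$ is convex, the standard $H^2$-regularity for the Neumann problem yields $\norm{v_1}_{H^2(\Omega)} \le C_\Omega \norm{g^\Omega}_{L^2(\Omega)}$, and because $H^2(\Omega) = B^2_2(L^2(\Omega))$ embeds continuously into $B^{\frac32}_\infty(L^2(\Omega))$ (the smoothness index drops from $2$ to $\tfrac32$), we obtain $\norm{v_1}_{B^{\frac32}_\infty(L^2(\Omega))} \lesssim \norm{g^\Omega}_{L^2(\Omega)}$.

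The genuine work concerns $v_2$, where the datum $g^\Gamma$ lies only in $L^2(\Gamma)$. Here I would treat the solution operator $T\colon g^\Gamma \mapsto v_2$ and establish two endpoint mapping properties. First, testing the weak form with $v_2$ and using that the pairing $\int_\Gamma g^\Gamma\vphi$ extends to a duality bounded by $\norm{g^\Gamma}_{H^{-\frac12}(\Gamma)}\,\norm{\vphi}_{H^1(\Omega)}$ via the trace theorem, the coercivity of $(\cdot,\cdot)_{H^1(\Omega)}$ (Lax--Milgram) gives $T\colon H^{-\frac12}(\Gamma)\to H^1(\Omega)$ boundedly. Second, for $g^\Gamma\in H^{\frac12}(\Gamma)$ the convex-domain $H^2$-regularity of the Neumann problem gives $T\colon H^{\frac12}(\Gamma)\to H^2(\Omega)$ boundedly. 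Applying the real interpolation functor $(\cdot,\cdot)_{\frac12,\infty}$ to these two bounds then furnishes
\[
  T\colon \left(H^{-\frac12}(\Gamma),H^{\frac12}(\Gamma)\right)_{\frac12,\infty}\to \left(H^1(\Omega),H^2(\Omega)\right)_{\frac12,\infty},
\]
and, by the identification of real interpolation spaces of Sobolev spaces with Besov spaces, the target space coincides with $B^{\frac32}_\infty(L^2(\Omega))$.

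It remains to control the interpolation norm of $g^\Gamma$ on the data side by its $L^2(\Gamma)$-norm. Since $\Gamma$ is the boundary of a convex polyhedron, the $L^2$-based scale satisfies $\left(H^{-\frac12}(\Gamma),H^{\frac12}(\Gamma)\right)_{\frac12,2} = L^2(\Gamma)$ with equivalent norms, and the monotonicity of real interpolation in the second index ($2\le\infty$) yields the continuous embedding $L^2(\Gamma)\hookrightarrow\left(H^{-\frac12}(\Gamma),H^{\frac12}(\Gamma)\right)_{\frac12,\infty}$. Hence $\norm{v_2}_{B^{\frac32}_\infty(L^2(\Omega))}\lesssim \norm{g^\Gamma}_{L^2(\Gamma)}$, and combining this with the bound for $v_1$ through the triangle inequality completes the proof.

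The main obstacle I anticipate is assembling the correct endpoint regularity---in particular the $H^2$-estimate for the Neumann problem on a convex, possibly non-smooth, domain with $H^{\frac12}(\Gamma)$ data---together with the precise identification of the two interpolation spaces, so that the intrinsically defined Besov space of the statement matches the interpolation-space output. Once these ingredients are in place, the interpolation argument itself is routine.
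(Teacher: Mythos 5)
Your interpolation architecture closely parallels the paper's (real interpolation with parameters $(\tfrac12,\infty)$, the identification $(H^1(\Omega),H^2(\Omega))_{\frac12,\infty}=B^{\frac32}_\infty(L^2(\Omega))$, and convex-domain $H^2$-regularity as the sole PDE input), but it hinges on an endpoint that you neither prove nor can simply quote: the boundedness of $T\colon H^{\frac12}(\Gamma)\to H^2(\Omega)$ for the \emph{non-homogeneous} Neumann problem on a convex \emph{polyhedral} domain. The classical convex-domain theorem (Grisvard, Theorem~3.2.1.3, which is what the paper uses) concerns the homogeneous Neumann problem with $L^2(\Omega)$ source. To pass from it to data $g^\Gamma\in H^{\frac12}(\Gamma)$ one must construct an $H^2(\Omega)$-lifting whose normal derivative equals $g^\Gamma$, and on a polyhedral boundary this is exactly where the difficulty sits: the outward normal $\mathsf{n}$ is discontinuous across edges and corners, multiplication by $\mathsf{n}$ does not preserve $H^{\frac12}(\Gamma)$, and consequently the normal-trace space of $H^2(\Omega)$ is \emph{not} $H^{\frac12}(\Gamma)$ --- it is described by integral compatibility conditions that couple the normal data on one face with tangential derivatives of the Dirichlet trace on adjacent faces. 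In two dimensions one can check that prescribing only the normal data leaves enough freedom in the unprescribed Dirichlet traces to satisfy these conditions, so the endpoint can be salvaged for convex polygons; but the proposition is stated for $\Omega\subset\R^d$, and for $d\ge 3$ the edge/vertex compatibility analysis is genuinely delicate and you give neither an argument nor a reference. Since this is precisely the step you yourself flag as ``the main obstacle,'' the proposal is incomplete at its central point.

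The paper's proof is organized so that this issue never arises: instead of interpolating in the boundary-data scale $\left(H^{-\frac12}(\Gamma),H^{\frac12}(\Gamma)\right)$, it treats the whole right-hand side as a single functional $F_g$ and interpolates the solution operator in the volume scale, between $H^1(\Omega)^*\to H^1(\Omega)$ (Lax--Milgram) and $L^2(\Omega)\to H^2(\Omega)$ (the homogeneous convex-Neumann result). The boundary datum is then absorbed on the data side via the sharp Besov trace theorem of Behrndt--Gesztesy--Mitrea, $\|\vphi\|_{L^2(\Gamma)}\le C\|\vphi\|_{B^{\frac12}_1(L^2(\Omega))}$, combined with the duality $(H^1(\Omega),L^2(\Omega))_{\frac12,1}^*=(L^2(\Omega),H^1(\Omega)^*)_{\frac12,\infty}$, which places $F_g$ in the correct interpolation space with norm bounded by $\|g^\Omega\|_{L^2(\Omega)}+\|g^\Gamma\|_{L^2(\Gamma)}$. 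To complete your route you would either have to establish the $H^{\frac12}(\Gamma)\to H^2(\Omega)$ endpoint on convex polyhedra (nontrivial for $d\ge 3$), or replace the boundary-scale interpolation by this dual, trace-theorem-based argument --- which is exactly the paper's proof.
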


\begin{proof}
\fbox{\scriptsize 1} We start by measuring the regularity of the right-hand side
\begin{equation*}
	\langle F_g, \vphi \rangle
	\defas
	\int_\Omega g^\Omega \vphi +\int_\Gamma g^\Gamma \vphi,
\quad
   \vphi \in H^1(\Omega),
\end{equation*}
in \eqref{Eq:EllNeumann}. In view of \cite[Section~4]{DeVore.Sharpley:1993} and \cite[Sections~1.2.5 and 1.3.4]{Triebel:1992}, the above definition of $B^s_q(L^p(\Omega))$ coincides with \cite[Definition~(2.52)]{Behrndt.Gesztesy.Mitrea:2022} in the sense of equivalent norms. Hence, we can use the sharp trace theorem \cite[Proposition~3.5]{Behrndt.Gesztesy.Mitrea:2022} to derive
\begin{align*}
	|\langle F_g, \vphi \rangle|
	&\leq
	\| g^\Omega \|_{L^2(\Omega)}  \| \vphi \|_{L^2(\Omega)} 
	+
    C \| g^\Gamma \|_{L^2(\Gamma)}  \| \vphi \|_{B^{\frac12}_1(L^2(\Omega))} 	
 \\
    &\leq
   	C \left(
   	\|g^\Omega\|_{L^2(\Omega)}+\|g^\Gamma\|_{L^2(\Gamma)}
   	\right)
   	\| \vphi \|_{B^{\frac12}_1(L^2(\Omega))} .
\end{align*}
Thanks to \cite{Johnen.Scherer:1977}, we have $B^{\frac12}_1(L^2(\Omega)) = (H^1(\Omega), L^2(\Omega))_{\frac12,1}$ and, in view of the duality theorem \cite[(14.1.8)]{Brenner.Scott:2008}, $(H^1(\Omega), L^2(\Omega))_{\frac12,1}^* = (L^2(\Omega), H^1(\Omega)^*)_{\frac12,\infty}$. Consequently,
\begin{equation}
\label{Eq:reg-rhs-Neumann}
	\| F_g \|_{(L^2(\Omega), H^1(\Omega)^*))_{\frac12,\infty}}
	\leq
	C \left(
	\|g^\Omega\|_{L^2(\Omega)}+\|g^\Gamma\|_{L^2(\Gamma)}
	\right).	
\end{equation} 

\fbox{\scriptsize 2} We next specify the corresponding regularity gain in the solution $v_g$. Replacing the right-hand side of \eqref{Eq:EllNeumann} by a generic functional $G \in H^1(\Omega)^*$, we readily observe 
\begin{equation*}
	\| v_G \|_{H^1(\Omega)}
	\leq
	\| G \|_{H^1(\Omega)^*}
\end{equation*}
for the corresponding solution $v_G$. Next, let us consider $G \in L^2(\Omega)$ and notice that this corresponds to a homogeneous Neumann problem with source term in $L^2(\Omega)$, i.e.\ \eqref{Eq:EllNeumann} with $g^\Omega = G$ and $g^\Gamma=0$. Since $\Omega$ is convex, we then have
\begin{equation*}
	\| v_G \|_{H^2(\Omega)}
	\leq
	C \| G \|_{L^2(\Omega)};
\end{equation*}
cf.\ \cite[Theorem~3.2.1.3]{Grisvard:1985}. Interpolating these two
inequalities with \cite[(14.1.5)]{Brenner.Scott:2008} gives 
\begin{equation*}
	\| v_G \|_{B^{\frac{3}{2}}_\infty(L^2(\Omega))}
   \leq
   C \| v_G \|_{(H^2(\Omega), H^1(\Omega))_{\frac{1}{2},\infty}}
	\leq
	C \| G \|_{(L^2(\Omega), H^1(\Omega)^*))_{\frac12,\infty}},
\end{equation*}
where the first inequality follows from $(H^2(\Omega), H^1(\Omega))_{\frac{1}{2},\infty} = B^{\frac{3}{2}}_\infty(L^2(\Omega))$, see \cite[6.2.4]{Bergh.Loefstroem:1976}, again taking into account \cite[Section~4]{DeVore.Sharpley:1993} and \cite[Sections~1.2.5 and 1.3.4]{Triebel:1992}. Hence, inserting \eqref{Eq:reg-rhs-Neumann} in the last inequality with $G=F_g$ finishes the proof.
\end{proof}

Proposition~\ref{P:Besov-reg-Neumann} puts us in the position to prove the following bound, which, thanks to the inequality \eqref{Eq:delta<L2norm-bc}, yields a bound  for the compact part of the error.

\begin{lem}[Upper bound for compact error -- boundary control]
\label{L:delta<hRes;Besov}
Let \(\Omega\subset \R^d\) be a convex domain with polyhedral boundary. Then the $L^2$-errors in \eqref{Eq:delta<L2norm-bc}  are bounded in terms of the residual of $X$:
  \begin{multline*}
    \|R_2X-Z\|_{L^2(\Gamma)}^2+\|R_1X-U\|_{L^2(\Gamma)}^2+\|R_1X-U\|_{L^2(\Omega)}^2
    \\
    \le C_{\Omega}^2
    C_\mesh^2\sum_{z\in\vertices}h_z \|\Res(X)\|^2_{H_z^*}.
  \end{multline*}
  Here \(C_\mesh, C_{\Omega}\) are essentially the constants
  from~\eqref{Eq:Res-localisation-bc} and Proposition~\ref{P:Besov-reg-Neumann}, respectively.
\end{lem}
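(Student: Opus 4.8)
The plan is to mimic the proof of Lemma~\ref{L:apost-deltaK}, now using the sharper regularity from Proposition~\ref{P:Besov-reg-Neumann} together with the boundary-adapted localization \eqref{Eq:Res-localisation-bc}. I would focus on one representative term, say $\|R_1X-U\|_{L^2(\Omega)}^2$, and indicate that the two boundary terms $\|R_1X-U\|_{L^2(\Gamma)}^2$ and $\|R_2X-Z\|_{L^2(\Gamma)}^2$ follow by the same duality argument. Writing $e \defas R_1X-U \in H^1(\Omega)$, the first step is to realize the $L^2(\Omega)$-norm of $e$ by duality: introduce the solution $\psi \in H^1(\Omega)$ of the Neumann problem \eqref{Eq:EllNeumann} with right-hand side $g=(e,0)$, so that $(\psi,\vphi)_{H^1(\Omega)} = (e,\vphi)_{L^2(\Omega)}$ for all $\vphi$. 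Proposition~\ref{P:Besov-reg-Neumann} then yields $\|\psi\|_{B^{3/2}_\infty(L^2(\Omega))} \le C_\Omega \|e\|_{L^2(\Omega)}$.

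Next I would exploit the Ritz orthogonality \eqref{Eq:Ritz-bc} and the Galerkin orthogonality \eqref{Eq:Galerkin-orthogonality-bc} of the residual. Testing the Neumann identity with $\vphi=e$ gives $\|e\|_{L^2(\Omega)}^2 = (\psi,e)_{H^1(\Omega)}$; since $U$ is the Ritz projection of $R_1X$, this equals $\langle \Res_2(X),\psi\rangle$, and by Galerkin orthogonality we may subtract the Scott--Zhang quasi-interpolant $\mathcal{I}_{\textsf{sz}}\psi \in S_\ell^1(\mesh)$ to obtain $\|e\|_{L^2(\Omega)}^2 = \langle \Res_2(X), \psi - \mathcal{I}_{\textsf{sz}}\psi\rangle$. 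Using the partition of unity $\{\phi_z\}$ to split this into a sum over vertices and bounding each contribution by the local dual norm $\|\Res(X)\|_{H_z^*}$ (now with the boundary-adapted local spaces $H_z$), the proof reduces to a local interpolation estimate of the form
\begin{equation*}
  \| \nabla\big( (\psi - \mathcal{I}_{\textsf{sz}}\psi)\phi_z \big) \|_{L^2(\omega_z)}
  \le
  C_\mesh\, h_z^{1/2} \, |\psi|_{B^{3/2}_\infty(L^2(\tilde\omega_z))},
\end{equation*}
summing of which against the local residuals and applying Cauchy--Schwarz delivers the claimed factor $\sum_z h_z \|\Res(X)\|_{H_z^*}^2$ after cancelling one power of $\|e\|_{L^2(\Omega)}$.

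The main obstacle, and the point where this proof genuinely differs from Lemma~\ref{L:apost-deltaK}, is establishing that local interpolation estimate with the fractional exponent $h_z^{1/2}$ in the Besov scale. Whereas the $H^2$-argument of Lemma~\ref{L:apost-deltaK} gives the clean power $h_z$ from a full second-derivative seminorm, here the regularity sits only at smoothness $3/2$, so one gains $h_z^{1/2}$ rather than $h_z$; I would derive this from standard approximation properties of $\mathcal{I}_{\textsf{sz}}$ in Besov spaces together with the product-rule splitting already used in \eqref{Eq:countering-cutoff-H2}, taking care that the cut-off $\phi_z$ with $\|\nabla\phi_z\|_{L^\infty} \le C_\mesh h_z^{-1}$ does not destroy the fractional order. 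A secondary subtlety is the treatment of boundary vertices, where $H_z$ is the restricted $H^1$ space rather than $H^{-1}(\omega_z)$; here the duality pairing and the local norm must be matched against the mixed volume/boundary data, but since Proposition~\ref{P:Besov-reg-Neumann} already handles both $g^\Omega$ and $g^\Gamma$ uniformly, the same quasi-interpolation estimate applies verbatim. Collecting the three terms and absorbing the common factor $\|e\|_{L^2(\Omega)}$ (respectively the boundary-error factors) then yields the stated bound with $C_\Omega C_\mesh$.
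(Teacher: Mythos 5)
Your plan is structurally the paper's own proof: duality with the Neumann problem \eqref{Eq:EllNeumann}, the Besov regularity of Proposition~\ref{P:Besov-reg-Neumann}, Galerkin orthogonality \eqref{Eq:Galerkin-orthogonality-bc} to subtract the Scott--Zhang interpolant, partition-of-unity localization against the local norms $\|\Res(X)\|_{H_z^*}$, and the fractional local estimate with $h_z^{1/2}$. The only cosmetic deviation is that you run three separate dual problems (one per $L^2$-term, e.g.\ $g=(e,0)$ for the volume term), whereas the paper combines the volume and boundary data of $R_1X-U$ into a single dual problem with $g=(R_1X-U,(R_1X-U)|_\Gamma)$; both work and differ only in the constant.

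There is, however, one step you gloss over that is a genuine technical point and is exactly where the paper needs an extra tool. After Cauchy--Schwarz your bound contains the factor
\begin{equation*}
  \Bigl( \sum_{z\in\vertices} \|\psi\|_{B^{3/2}_\infty(L^2(\tilde\omega_z))}^2 \Bigr)^{\frac12},
\end{equation*}
and to cancel the power of $\|e\|_{L^2(\Omega)}$ you must control this by $C\,\|\psi\|_{B^{3/2}_\infty(L^2(\Omega))}$. In the $H^2$ setting of Lemma~\ref{L:apost-deltaK} the analogous step is a finite-overlap triviality, since $\sum_z |\psi|^2_{H^2(\tilde\omega_z)} \lesssim |\psi|^2_{H^2(\Omega)}$. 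For $B^{3/2}_\infty(L^2)$ it is not: the norm involves a supremum over the step size $t$ of the modulus of smoothness, and the local suprema over overlapping patches need not sum in $\ell^2$ against the global supremum. The paper resolves this by passing to \emph{averaged} moduli of smoothness, cf.\ \cite[Lemma~4.10]{Gaspoz.Morin:2013}, which do localize additively; your phrase ``summing \dots and applying Cauchy--Schwarz delivers the claimed factor'' silently assumes this summability. Likewise, your appeal to ``standard approximation properties of $\mathcal{I}_{\textsf{sz}}$ in Besov spaces'' can be made precise as the paper does: interpolate, via \cite[(14.1.5)]{Brenner.Scott:2008}, the $H^1$-stability bound $\|\nabla((\psi-\mathcal{I}_{\textsf{sz}}\psi)\phi_z)\|_{L^2(\omega_z)}\le C_\mesh\|\psi\|_{H^1(\tilde\omega_z)}$ against the $H^2$ bound \eqref{Eq:countering-cutoff-H2}, using $(H^2,H^1)_{1/2,\infty}=B^{3/2}_\infty(L^2)$. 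With these two patches your argument is complete.
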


\begin{proof}
We provide only a sketch of the proof, which is very similar to the one of Lemma~\ref{L:apost-deltaK} but involves some additional technicality due to the Besov regularity in Proposition~\ref{P:Besov-reg-Neumann}. We start with the 
term \(\|R_1X-U\|_{L^2(\Gamma)}^2+\|R_1X-U\|_{L^2(\Omega)}^2\). According to Proposition~\ref{P:Besov-reg-Neumann},  there exists \(\psi\in B^{\frac{3}{2}}_\infty(L^2(\Omega))\) weakly solving
\begin{gather*}
    -\Delta \psi+\psi=R_1X-U_h~\quad\text{in}~\Omega\qquad
    \text{and}\qquad
    \partial_{\textsf{n}}\psi=R_1X-U_h\quad\text{on}~\Gamma
\intertext{and}
    \| \psi \|_{B^{\frac{3}{2}}(L^2(\Omega))}
    \le
    C_{\Omega} (\|R_1X-U_h\|_{L^2(\Omega)}+\|R_1X-U_h\|_{L^2(\Gamma)}).
\end{gather*}
Again,  \(\mathcal{I}_{\textsf{sz}}:H^1(\Omega) \to {S}_\ell^1(\mesh)\) denots the Scott-Zhang quasi-interpolation operator~\cite{ScottZhang:1990}. Recalling~\eqref{Eq:EllNeumann} and the definition of $R$, we have thanks to the 
orthogonality~\eqref{Eq:Galerkin-orthogonality-bc} of $\Res(X)$ that
  \begin{align*}
    \|R_1X-U\|_{L^2(\Omega)}^2&+\|R_1X-U\|_{L^2(\Gamma)}^2=\int_\Omega \nabla (R_1X-U)\cdot\nabla
                                \psi+ (R_1X-U)\psi
    \\
    &=
                                \langle \Res_2(X),\psi\rangle
    =\sum_{z\in\vertices}\langle
                                \Res_2(X),(\psi-\mathcal{I}_{\textsf{sz}}
      \psi)\phi_z\rangle
    \\
                              &\le
                          \sum_{z\in\vertices}\|
                                \Res_i(X)\|_{H_z^*}\|\nabla((\psi-\mathcal{I}_{\textsf{sz}}
      \psi)\phi_z)\|_{L^2(\omega_z)},
  \end{align*}
where we have used that the Lagrange basis functions \(\{\phi_z:z\in\vertices\}\) of \(S_1^1(\mesh)\) form a partition of unity and that \(\operatorname{supp}{\phi_z}=\omega_z\), \(z\in\vertices\). Similarly to \eqref{Eq:countering-cutoff-H2}, we derive
\begin{equation*}
    \|\nabla((\psi-\mathcal{I}_{\textsf{sz}} \psi)\phi_z)\|_{L^2(\omega_z)}
    \le
   C_\mesh \|\psi \|_{H^1(\tilde \omega_z)}.
\end{equation*}
Interpolating with \cite[(14.1.5)]{Brenner.Scott:2008} both inequalities
yield
\begin{equation*}
	\|\nabla((\psi-\mathcal{I}_{\textsf{sz}} \psi)\phi_z)\|_{L^2(\omega_z)}
	\le
	C_\mesh h_z^{\frac{1}{2}}\|\psi \|_{B^{\frac{3}{2}}_\infty(L^2(\tilde \omega_z))}.
\end{equation*}
Employing averaged moduli, cf.\ \cite[Lemma~4.10]{Gaspoz.Morin:2013} and using that the overlapping of the domains \(\tilde \omega_z=\bigcup_{y\in \vertices\cap \omega_z}\omega_y\) is controlled by the shape regularity of \(\mesh\),
we conclude with~\eqref{Eq:regRes2} that
  \begin{multline*}
    \|R_1X-U\|_{L^2(\Omega)}^2+\|R_1X-U\|_{L^2(\Gamma)}^2
    \\
    \le
    C_\mesh C_{\Omega}\left(
     \sum_{z\in\vertices} h_z \| \Res_i(X) \|_{H_z^*}^2
    \right)^{\frac12}
    \left( 
     \|R_1X-U\|_{L^2(\Omega)}^2+\|R_1X-U\|_{L^2(\Gamma)}^2
    \right)^{\frac12}.
 \end{multline*}
 Applying similar arguments to \(\|R_2X-Z\|_{L^2(\Gamma)}^2\), the assertion follows.
\end{proof}

In combination with Theorem~\ref{T:compact-case}, we thus get the
following a posteriori bounds. 
\begin{thm}[Bounding the $d_\alpha$-error for boundary control -- compact case]
  \label{T:apost-Example-bcCC}
  Suppose in addition to the setting in
  Theorem~\ref{T:Apost1-ExampleBC} that \(\Omega\subset \R^d\) is
  convex with a polyhedral boundary. Then, we have
  \begin{multline*}
    \left(\frac1{d+1} \sum_{z\in\vertices} \|\Res(X)\|_{H_z^*}^2\right)^{\frac12}
    \le d_\alpha(x,X)
    \\\le
    C\left(
      1 + \frac{\kappa}{\sqrt\alpha}\,\left(
        \frac{ \sum_{z\in\vertices} h_z\|\Res(X)\|_{H_z^*}^2 }{ \sum_{z\in\vertices} \|\Res(X)\|_{H_z^*}^2}
      \right)^{\frac12}
    \right)
    \left( \sum_{z\in\vertices} \|\Res(X)\|_{H_z^*}^2 \right)^{\frac12}
  \end{multline*}
  The constant \( C \) depends on the Poincar\'e constant \(C_P\), the
  constant \(C_{\Omega}\) from Proposition~\ref{P:Besov-reg-Neumann} and
  the shape regularity of the mesh \(\mesh\); the constant \(\kappa\) is defined in~\eqref{kappa}.
\end{thm}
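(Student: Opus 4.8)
The plan is to specialize the abstract compact-case estimate of Theorem~\ref{T:compact-case} to the Neumann setting at hand and then replace the abstract quantities $\|\Res(X)\|_*$ and $\delta_\alpha^*(RX,X)$ by their localized, computable counterparts. Since the bilinear form is the full $H^1$-inner product, I first note $m_a=M_a=1$ and $M=\max\{M_I,M_C\}=1+C_\Gamma$. Multiplying out the factor $\|\Res(X)\|_*$ inside the parenthesis in Theorem~\ref{T:compact-case} then turns its bounds into the additive form
\begin{equation*}
  \|\Res(X)\|_*
  \le
  d_\alpha(x,X)
  \le
  \|\Res(X)\|_* + (\kappa+1)\frac{M}{\sqrt\alpha}\,\delta_\alpha^*(RX,X).
\end{equation*}
Chaining the left inequality here with the left inequality of the localization~\eqref{Eq:Res-localisation-bc} immediately yields the claimed lower bound $\big(\tfrac1{d+1}\sum_{z\in\vertices}\|\Res(X)\|_{H_z^*}^2\big)^{1/2}\le\|\Res(X)\|_*\le d_\alpha(x,X)$.

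For the upper bound I would estimate the two additive terms separately. The right inequality of~\eqref{Eq:Res-localisation-bc} gives $\|\Res(X)\|_*\le\sqrt{C_\mesh}\,\big(\sum_{z\in\vertices}\|\Res(X)\|_{H_z^*}^2\big)^{1/2}$, while chaining~\eqref{Eq:delta<L2norm-bc} with Lemma~\ref{L:delta<hRes;Besov} yields $\delta_\alpha^*(RX,X)\le C_\Omega C_\mesh\,\big(\sum_{z\in\vertices}h_z\|\Res(X)\|_{H_z^*}^2\big)^{1/2}$. Inserting both and using $\kappa\ge1$ (so that $\kappa+1\le2\kappa$), I arrive at
\begin{equation*}
  d_\alpha(x,X)
  \le
  \sqrt{C_\mesh}\Big(\sum_{z\in\vertices}\|\Res(X)\|_{H_z^*}^2\Big)^{\!1/2}
  +
  2\kappa\,\frac{M C_\Omega C_\mesh}{\sqrt\alpha}\Big(\sum_{z\in\vertices}h_z\|\Res(X)\|_{H_z^*}^2\Big)^{\!1/2}.
\end{equation*}
Factoring out $\big(\sum_{z\in\vertices}\|\Res(X)\|_{H_z^*}^2\big)^{1/2}$ and setting $C=\max\{\sqrt{C_\mesh},\,2MC_\Omega C_\mesh\}$ then reproduces the stated product form with the ratio of weighted to plain residual sums.

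The substantive work---controlling the weaker $L^2$-norms of $RX-X$ by a residual weighted with the extra factors $h_z$---has already been carried out through the sharp Besov trace regularity of Proposition~\ref{P:Besov-reg-Neumann} inside Lemma~\ref{L:delta<hRes;Besov}, so the present argument is essentially bookkeeping. The only point demanding care is the ratio structure: one must first multiply through to keep $\|\Res(X)\|_*$ out of a denominator, and only at the end re-factor the additive bound so that the weighted sum appears relative to the plain sum, exactly as in the statement. I therefore expect no genuine obstacle beyond tracking the shape-regularity and domain constants.
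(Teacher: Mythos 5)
Your proposal is correct and follows exactly the route the paper intends: specialize Theorem~\ref{T:compact-case} with $m_a=M_a=1$ and $M=1+C_\Gamma$, then replace $\|\Res(X)\|_*$ via the localization~\eqref{Eq:Res-localisation-bc} and $\delta_\alpha^*(RX,X)$ via \eqref{Eq:delta<L2norm-bc} combined with Lemma~\ref{L:delta<hRes;Besov}, finishing with $\kappa+1\le 2\kappa$ and re-factoring into the ratio form. The paper gives no separate proof precisely because it regards this bookkeeping as immediate, and your explicit version of it is sound.
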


 In the absence of control constraints, we have from Theorem~\ref{T:apostNCC} the following a~posteriori bounds for the combined $H^1(\Omega)$-errors of the states.
 \begin{thm}[Bounding the $\norm{\cdot}$-error for boundary control]
 \label{T:apost-Example-bcNCC}
 Suppose in addition to the setting in Theorem~\ref{T:apost-Example-bcCC} that \(a=-\infty\), i.e.\ no control constraints apply. Then we have 
 \begin{multline*}
    \max\left\{
     \frac{\sqrt{\alpha}}{\sqrt{\alpha}+(1+C_\Gamma)},\,
      1- C \kappa\, G \big(\Res(X) \big)\right\}\left(\frac1{d+1}
      \sum_{z\in\vertices}\|\Res(X)\|_{H_z^*}^2\right)^{\frac12}
      \\
      \le
   \|x-X\|_{H^1(\Omega)}
     \le
    C\left( 1 + \kappa\, G \big( \Res(X) \big)
        \right)
     \left(
      \sum_{z\in\vertices}\|\Res(X)\|_{H_z^*}^2
    \right)^{\frac12},
  \end{multline*}
  with
  \begin{align*}
    G\big( \Res(X) \big) =  \left(\frac{\sum_{z\in\vertices}h_z\|\Res(X)\|_{H_z^*}^2}{\sum_{z\in\vertices}\|\Res(X)\|_{H_z^*}^2}\right)^{\frac12}.
  \end{align*}
The constant \( C \) depends on the Poincar\'e constant \(C_P\), the
constant \(C_{\Omega}\) from Proposition~\ref{P:Besov-reg-Neumann} and
the shape regularity of the mesh \(\mesh\); the constant \(\kappa\) is defined in~\eqref{kappa}.
\end{thm}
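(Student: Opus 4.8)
The plan is to obtain this statement as the boundary-control instance of the abstract Theorem~\ref{T:apostNCC}, feeding in the quantitative compactness bound of Lemma~\ref{L:delta<hRes;Besov} together with the localization~\eqref{Eq:Res-localisation-bc}. First I would record the constants of the present setting: since $a(v,\vphi) = (v,\vphi)_{H^1(\Omega)}$ we have $m_a = M_a = 1$, and since $M_C = C_\Gamma$ and $M_I = 1+C_\Gamma$ we have $M = \max\{M_I,M_C\} = 1+C_\Gamma$. The key observation is that, with $\tilde x = X = (U,Z)$, the seminorm entering Theorem~\ref{T:apostNCC} unfolds through the definition~\eqref{enorm} into exactly the sum of $L^2$-errors controlled by Lemma~\ref{L:delta<hRes;Besov}, namely
\[
  \snorm{X - RX}^2
  =
  \norm{R_1X-U}_{L^2(\Omega)}^2 + \norm{R_1X-U}_{L^2(\Gamma)}^2 + \norm{R_2X-Z}_{L^2(\Gamma)}^2.
\]
Abbreviating $S \defas \big(\sum_{z\in\vertices}\norm{\Res(X)}_{H_z^*}^2\big)^{1/2}$, that lemma yields $\snorm{X-RX} \le C_\Omega C_\mesh\, G(\Res(X))\, S$, while~\eqref{Eq:Res-localisation-bc} provides the two-sided control $\tfrac{1}{\sqrt{d+1}}\, S \le \norm{\Res(X)}_* \le \sqrt{C_\mesh}\, S$.

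For the upper bound I would insert these facts into the upper bound of Theorem~\ref{T:apostNCC}. With $m_a=M_a=1$ it reads $\norm{x-X} \le \norm{\Res(X)}_* + \tfrac{\kappa}{M}\snorm{X-RX}$; estimating the first summand by $\sqrt{C_\mesh}\,S$ and the second by $\tfrac{\kappa}{1+C_\Gamma}C_\Omega C_\mesh\, G(\Res(X))\,S$, and absorbing the data-dependent factors into a single constant $C$, produces the claimed bound $C\big(1 + \kappa\,G(\Res(X))\big)\,S$.

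For the lower bound I would start from the $\max$ expression in Theorem~\ref{T:apostNCC}. Its first option $\tfrac{M_a\sqrt\alpha}{M_a\sqrt\alpha + M}$ becomes $\tfrac{\sqrt\alpha}{\sqrt\alpha+(1+C_\Gamma)}$ verbatim. For the second option I would bound the ratio $\snorm{X-RX}/\norm{\Res(X)}_*$ from above by combining the lemma's estimate of the numerator with the localization lower bound $\norm{\Res(X)}_* \ge \tfrac{1}{\sqrt{d+1}}\,S$ on the denominator, obtaining $\snorm{X-RX}/\norm{\Res(X)}_* \le \sqrt{d+1}\,C_\Omega C_\mesh\, G(\Res(X))$; hence the second slot is bounded below by $1 - C\kappa\,G(\Res(X))$. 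Since the $\max$ is nondecreasing in each argument, replacing the exact second slot by this smaller quantity only decreases the $\max$, and a final use of $\norm{\Res(X)}_* \ge \tfrac{1}{\sqrt{d+1}}\,S$ turns the residual-norm prefactor into $\big(\tfrac{1}{d+1}\sum_{z\in\vertices}\norm{\Res(X)}_{H_z^*}^2\big)^{1/2}$.

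Here essentially all the genuine difficulty, namely quantifying the compactness of the trace evaluation and of the embedding through the sharp Besov regularity of Proposition~\ref{P:Besov-reg-Neumann}, has already been dispatched inside Lemma~\ref{L:delta<hRes;Besov}; what remains is careful bookkeeping of constants. The only point demanding mild care is the monotonicity argument for the $\max$ in the lower bound: one must estimate the second slot from below (not above) and check that the surviving prefactor is the advertised one even when $1 - C\kappa\,G(\Res(X))$ is negative, in which case the first, $\alpha$-dependent option dominates and the prefactor stays strictly positive, legitimizing the final replacement of $\norm{\Res(X)}_*$ by the smaller localized quantity.
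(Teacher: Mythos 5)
Your proposal is correct and follows exactly the paper's route: the paper obtains Theorem~\ref{T:apost-Example-bcNCC} as an immediate consequence of the abstract Theorem~\ref{T:apostNCC} (with $m_a=M_a=1$, $M=1+C_\Gamma$), the compactness bound of Lemma~\ref{L:delta<hRes;Besov} applied to $\snorm{X-RX}$, and the localization~\eqref{Eq:Res-localisation-bc}. Your treatment of the $\max$ in the lower bound --- noting that the first, always-positive option keeps the prefactor positive and so legitimizes replacing $\norm{\Res(X)}_*$ by the smaller localized quantity --- is precisely the bookkeeping the paper leaves implicit.
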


In view of $\norm{x-X} \leq d_\alpha(x,X)$, the upper bound in Theorem~\ref{T:Apost1-ExampleBC} can be used also for the $\norm{\cdot}$-error. Note that the first option in the max in the lower bound of Theorem~\ref{T:apost-Example-bcNCC} does not involve compactness.  We thus have upper and lower bounds for $\norm{\cdot}$-error which hold in general, i.e.\ do not need compactness. This pair of general bounds can be compared with the bounds in Theorem~\ref{T:apost-Example-bcNCC} using compactness. For the numerical comparison of such pairs, we consider
\begin{equation}
  \label{neumann-control-example}
  \begin{multlined}
    \min_{(q,u)\in L^2(\Gamma) \times H^1( \Omega)}
    \frac12 \norm{ u-\ud }_{L^2(\Omega)}^2 + \int_{\Gamma} g_1 u  + \frac\alpha2 \norm{q}_{L^2(\Gamma)}^2 \\
    \quad\text{subject to}\quad
    -\Delta u + u=f ~\text{in}~\Omega \quad\text{and}\quad
    \partial_{\textsf{n}}{u}=g_2 +q~\text{on}~\Gamma,
  \end{multlined}
\end{equation}
where $\Omega$ is the convex domain that is meshed in Figure~\ref{F:meshes-neumann} and has an  internal maximum  angle $\frac{35\pi}{36}$ at the origin, $u= 0 $, $z=r^{\frac{36}{35}}\cos(\frac{36}{35} \theta)$, $\ud = -\sqrt{\alpha} z$, 
$g_1 = \frac{\partial{z}}{\partial n}$, and $g_2 = \frac{1}{\sqrt{\alpha}} z$. 

As in Section~\ref{sec:Example}, the numerical simulations are carried out with linear finite elements, a standard residual estimator scaled by the same procedure, and the adaptive mesh refinement is based on D\"orfler's marking strategy~\cite{Doerfler:1996} with parameter~\(0.6\).

Figure~\ref{F:rates-neumann-control-example} depicts the $\norm{\cdot}$-error and the aforementioned associated bounds, while Figure~\ref{F:meshes-neumann} gives an idea for the underlying adaptive mesh refinement. The following differences to the discussion of the bounds for the $d_\alpha$-error in Section~\ref{sec:Example} are noteworthy. Compactness is useful in both upper and lower bound. The upper bound with compactness is advantageous from the start. This is related with the fact that both upper bounds depend on $\alpha$ only through the factor $\kappa$ from \eqref{kappa}.  For the lower bounds of the $\norm{\cdot}$-error, we have a similar situation as for the upper bounds of the $d_\alpha$-error. Indeed, the lower bounds with compactness improves the general one only for fine meshes and the necessary fineness increases with decreasing $\alpha$. 

\begin{figure}[htb]
  \begin{subfigure}[b]{0.49\textwidth}
		\begin{tikzpicture}[scale=0.7]
			\begin{loglogaxis}[xlabel={Dofs}, ylabel={Size},legend pos=outer north east]
				\addplot[black, mark=+] table[x=Dofs , y= H1error] {./neumann-control-1p1.txt};
				\addlegendentry{$H^1$ error}
				\addplot[red, mark=square*] table[x=Dofs , y= Asbou] {./neumann-control-1p1.txt};
				\addlegendentry{as. upper }
				\addplot[blue, mark=*] table[x=Dofs , y= Nasbou] {./neumann-control-1p1.txt};
				\addlegendentry{dir. upper }
				\addplot[blue, mark=o] table[ x=Dofs, y=lextra] {./neumann-control-1p1.txt};
				\addlegendentry{lower}
                                \addplot[red, mark=square] table[ x=Dofs, y=lower] {./neumann-control-1p1.txt};
				\addlegendentry{clbd}
                                \legend{}
			\end{loglogaxis}
		\end{tikzpicture}
		\caption{ $\alpha = 10^{-1}$ }
	\end{subfigure}\hfill
	\begin{subfigure}[b]{0.49\textwidth}
			\begin{tikzpicture}[scale=0.7]
			\begin{loglogaxis}[xlabel={Dofs}, ylabel={Size}
                          ]
				\addplot[black, mark=+] table[x=Dofs , y= H1error] {./neumann-control5-2p1.txt};
				\addlegendentry{$H^1$-error}
				\addplot[red, mark=square*] table[x=Dofs , y= Asbou] {./neumann-control5-2p1.txt};
				\addlegendentry{cubd}
				\addplot[blue, mark=*] table[x=Dofs , y= Nasbou] {./neumann-control5-2p1.txt};
				\addlegendentry{gubd}
				\addplot[blue, mark=o] table[ x=Dofs, y=lextra] {./neumann-control5-2p1.txt};
				\addlegendentry{glbd}
                                \addplot[red, mark=square] table[ x=Dofs, y=lower] {./neumann-control5-2p1.txt};
				\addlegendentry{clbd}
                                \legend{}
			\end{loglogaxis}
		\end{tikzpicture}
		\caption{ $\alpha = 5\cdot 10^{-2}$ }
	\end{subfigure}
	\caption{$\norm{\cdot}$-error (+), general upper (\textcolor{blue}{$\bullet$}) and lower (\textcolor{blue}{$\circ$}) bound, as well as upper 
          (\textcolor{red}{$\textcolor{red}{\rule{.9ex}{.9ex}}$}) and lower
          bound with compactness versus DOFs for Example~\eqref{neumann-control-example}.
          \label{F:rates-neumann-control-example}}
	\end{figure}

         \begin{figure}[htb]
                      \includegraphics[width=.48\linewidth]{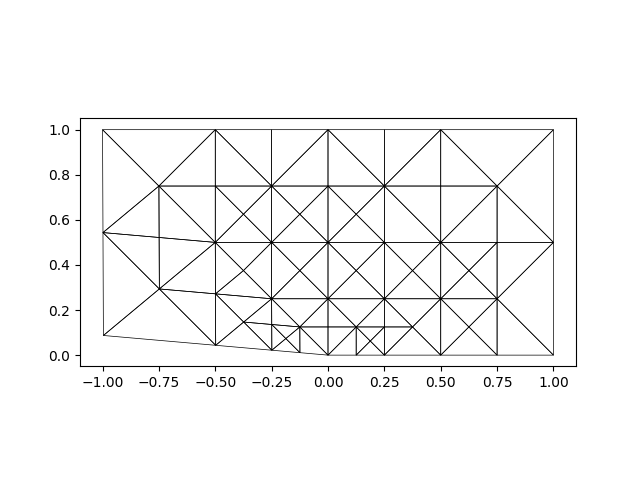}
                      \includegraphics[width=.48\linewidth]{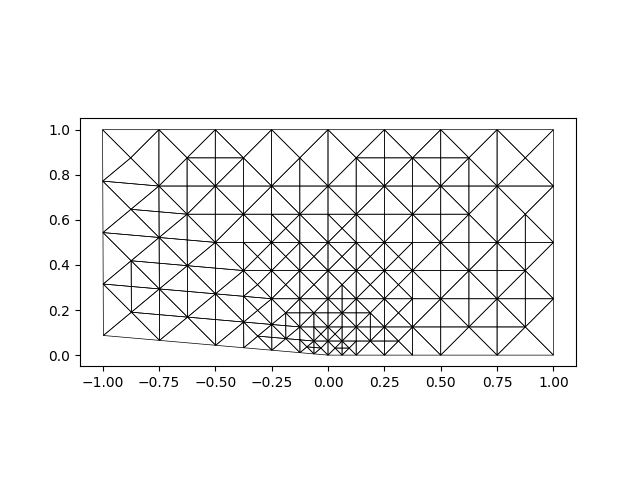}
                      \\
                      \includegraphics[width=.48\linewidth]{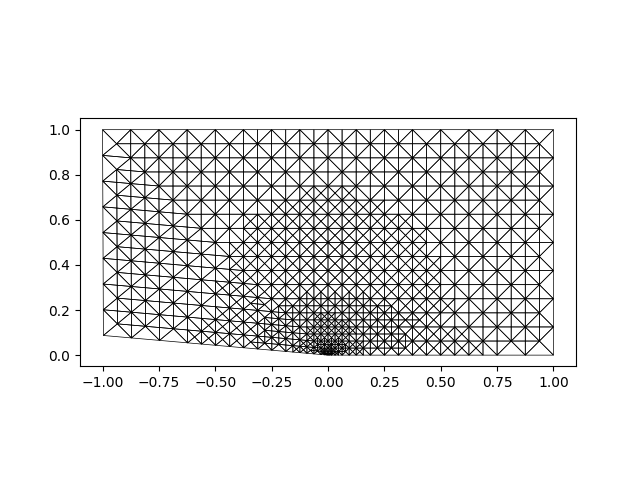}
                      \includegraphics[width=.48\linewidth]{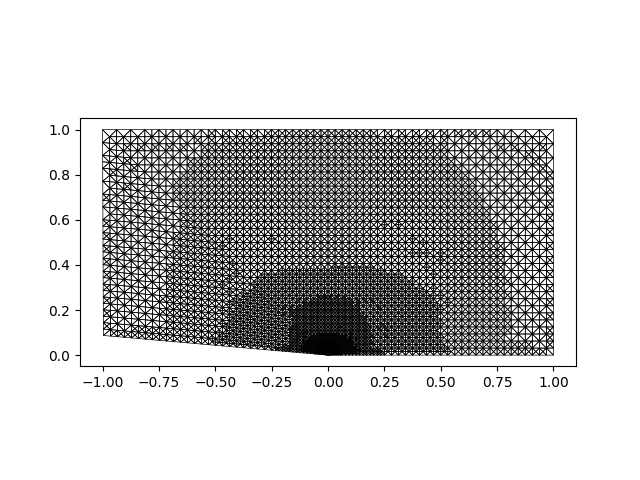}
                      \caption{Adaptive mesh refinement history for Example~\eqref{neumann-control-example} with $\alpha = 10^{-1}$.\label{F:meshes-neumann}}
                    \end{figure}

\subsection*{Acknowledgment}
Fernando Gaspoz is partially supported by Consejo Nacional de Investigaciones Científicas y Técnicas through grant PIP 11220200101180CO, by Agencia Nacional de Promoción Científica y Tecnológica through grants PICT-2020-SERIE A-03820 and 03267, and by Universidad Nacional del Litoral through grant CAI+D-2020 50620190100136LI.

\bibliographystyle{siam}
\bibliography{./lit}
\end{document}